\newtheorem{theorem}{Theorem}[section]
\newtheorem{lemma}[theorem]{Lemma}
\newtheorem*{theorem*}{Theorem}
\theoremstyle{plain}
\newtheorem{corollary}[theorem]{Corollary}
\newtheorem{problem}[theorem]{Problem}
\theoremstyle{definition} 
\newtheorem{definition}[theorem]{Definition}
\newtheorem{definition-lemma}[theorem]{Definition-Lemma}
\newtheorem{example}[theorem]{Example}
\newtheorem{remark}[theorem]{Remark}
\newtheorem*{notation}{Notation}
\numberwithin{equation}{section}
\newcommand{\C}{\mathbb{C}}
\newcommand{\R}{\mathbb{R}}
\newcommand{\Z}{\mathbb{Z}}
\newcommand{\Q}{\mathbb{Q}}
\newcommand{\OO}{\mathcal{O}}
\newcommand{\msp}{\mathsf{p}}
\def\P{\mathbb{P}}
\newcommand{\A}{\mathbb{A}}
\newcommand{\F}{\mathbb{F}}
\DeclareMathOperator{\Pic}{Pic}
\def\Pic{\operatorname{Pic}}
\def\Proj{\operatorname{Proj}}
\def\Spec{\operatorname{Spec}}
\def\Supp{\operatorname{Supp}}
\def\Sing{\operatorname{Sing}}
\def\gal{\operatorname{Gal}}
\DeclarePairedDelimiterX{\norm}[1]{\lVert}{\rVert}{#1}
\title[Cylindricity of weighted singular del Pezzo surfaces]
{Cylindricity of weighted singular del Pezzo surfaces over fields of characteristic zero}
\begin{document}

\author[I.-K.~Kim]{In-Kyun Kim}
\author[D.-W.~Lee]{Dae-Won Lee}
\author[M.~Sawahara]{Masatomo Sawahara}
\address[In-Kyun Kim]{June E Huh Center for Mathematical Challenges, Korea Institute for Advanced Study, 85 Hoegiro Dongdaemun-gu, Seoul 02455, Republic of Korea.}
\email{soulcraw@kias.re.kr}
\address[Dae-Won Lee]{Department of Mathematics, Ewha Womans University, 52 Ewhayeodae-gil, Seodaemun-gu, Seoul 03760, Republic of Korea}
\email{daewonlee@ewha.ac.kr}
\address[Masatomo Sawahara]{Faculty of Education, Hirosaki University, Bunkyocho 1, Hirosaki-shi, Aomori 036-8560, Japan}
\email{sawahara.masatomo@gmail.com}

\subjclass[2010]{14J45, 14E08, 14M20, 14R25, 14E05}
\date{\today}
\keywords{$\Bbbk$-form, rationality, cylinder, weighted projective plane}

\begin{abstract}
    In this paper, we study the cylindricity of $\Bbbk$-forms of singular del Pezzo surfaces obtained by blowing up weighted projective planes $\P(1,1,m)$ over an arbitrary field $\Bbbk$ of characteristic zero. 
    As an application, we construct vertical cylinders on higher-dimensional fibrations whose generic fibers are such $\Bbbk$-forms.
\end{abstract}

\maketitle



\section{Introduction}\label{sect:intro}

\subsection{Vertical cylinders}

Let $X$ be an algebraic variety defined over a field $\Bbbk$, and let $U$ be a Zariski open subset of $X$. Then $U$ is a {\it cylinder} if it is isomorphic to $\A_{\Bbbk}^1\times Z$ for some affine variety $Z$. Although cylinders are geometrically simple objects and they have received a lot of attention from the viewpoint of unipotent group actions on affine cones over polarized varieties (\cite{KPZ11, KPZ13,KPZ14,CPW16}). Recently, cylinders have also appeared naturally in the birational geometry of Fano varieties (\cite{CPPZ, CPW17}). 

\smallskip

We now recall a general approach to finding cylinders on normal projective varieties. This approach was formalized by Dubouloz and Kishimoto using the minimal model program ({\cite[Corollary 1.3.3]{BCHM}} and {\cite[Lemma 9]{DK19a}}). See {\cite[\S 1]{DK19b}} for details. 
To treat cylinders found in Mori fiber spaces with base varieties of positive dimension, it is useful to introduce the following notion of vertical cylinders:

\begin{definition}[{\cite{DK18}}]
Let $f\colon X \to Y$ be a dominant morphism between normal algebraic varieties defined over $\Bbbk$ and let $U \simeq \A^1_{\Bbbk} \times Z$ be a cylinder on $X$. We say that $U$ is a {\it vertical cylinder} with respect to $f$ if there exists a morphism $g\colon Z \to Y$ such that the restriction of $f$ to $U$ coincides with $g \circ pr_Z\colon U \simeq \A^1_{\Bbbk} \times Z \overset{pr_Z}{\to} Z \overset{g}{\to}Y$.
\end{definition}

Equivalently, the \(\A^1\)-direction is contained in the fibers of \(f\). Such cylinders can be detected on the generic fiber: 

\begin{lemma}[{\cite[Lemma 3]{DK18}}]
Let $f\colon X \to Y$ be a dominant morphism between normal algebraic varieties defined over $\Bbbk$, and let $X_{\eta} = f^{-1}(\eta)$ be the generic fiber of $f$, where $\eta$ is the generic point on $Y$. Then $f$ admits a vertical cylinder if and only if $X_{\eta}$ contains a cylinder defined over the function field $\Bbbk(Y)$ of $Y$. 
\end{lemma}

In the above lemma, when $\dim Y>0$, the existence of a vertical cylinder with respect to $f$ reduces to the study of the generic fiber $X_\eta$; however, the base field of $X_{\eta}$, which is the function field $\Bbbk (Y)$ of $Y$, is not algebraically closed. 
Note that the generic fiber of a Mori fiber space is a Fano variety of Picard rank one. Thus, we obtain the following problem: 

\begin{problem}\label{prob}
Let $V$ be a Fano variety of Picard rank one defined over a field $\Bbbk$ of characteristic zero. When does \(V\) contain a cylinder?
\end{problem}

In the case where $\dim V = 1$, the above problem is not difficult. Indeed, since $V$ is a $\Bbbk$-form of the projective line, $V$ contains a cylinder if and only if it has a $\Bbbk$-rational point. In the case where $\dim V = 2$, note that $V$ is a del Pezzo surface of Picard rank one. When $V$ has at most canonical singularities, Dubouloz-Kishimoto and the third author provided a complete criterion for $V$ to contain cylinders ({\cite{DK18,Saw24}}). This naturally leads to the study of del Pezzo surfaces with at most klt singularities, which are so-called log del Pezzo surfaces. On the other hand, we study Problem \ref{prob} when the assumption ``Picard rank one" is removed, with a view to more effectively finding cylinders of high-dimensional normal projective varieties with fibration structures, which are not necessarily Mori fiber spaces. 
In this paper, we will study the cylindricity of $\Bbbk$-forms of $S_m^n$, which are special kinds of log del Pezzo surfaces. We shall explain $S_m^n$ in the next subsection. 


\subsection{$\Bbbk$-forms of $S_m^n$}

The purpose of this paper is to carry out such a classification for a family of singular del Pezzo surfaces arising from weighted projective planes. Let $\bar{\Bbbk}$ be an algebraic closure of $\Bbbk$, and $S_{m}^{n}$ denote the surface obtained by blowing up $\P(1,1,m)$ at $n$ points in general position on its smooth locus. Its minimal resolution $\widetilde{S}^n_m$ is obtained by blowing up the Hirzebruch surface $\mathbb{F}_m$ of degree $m$ at $n$ points in general position. The surface $S_{m}^{n}$ has a unique quotient singularity of type $\frac{1}{m}(1,1)$. When $m \geq 4$, the anticanonical divisor $-K_{S^n_m}$ is ample for $n \leq m+4$; when $m \in \{2, 3\}$, it remains ample for $n \leq m+5$. These surfaces form a natural class of singular del Pezzo surfaces with non-canonical quotient singularities and unbounded geometric Picard rank. 

\smallskip

Over an algebraically closed field, the geometric properties of $S_m^n$ have been well studied ({\cite{CP20,KL25}}). Moreover, the surface is always rational, and their cylindricity is also partially understood. When $n \le m+3$, {\cite{Saw25}} showed that $S_m^n$ contains an $H$-polar cylinder for every $\Q$-divisor $H$ on $S_m^n$ (see, e.g., {\cite{CPW16}} for definition of a polar cylinder). When $n=m+4$, {\cite{KKW25}} proved that $S_m^n$ does not contain any anticanonical polar cylinder (see also {\cite{KSW26}}). 
Over a field that is not necessarily algebraically closed, rationality and cylindricity become genuinely arithmetic questions. They depend not only on the geometry of the minimal resolution, but also on the Galois action on exceptional curves and on the existence of $\Bbbk$-rational points on distinguished curves. This phenomenon is already visible in Picard rank one. Indeed, \cite{KKW25} shows that $S_{2u-1}^{2u+3}$ does not contain an anticanonical polar cylinder over $\C$. Combined with the minimality constraint on $\Bbbk$-forms of Picard rank one (Corollary \ref{cor:rank1}), this implies that no such $\Bbbk$-form is cylindrical. Our aim is to determine exactly how this arithmetic complexity manifests itself for arbitrary $\Bbbk$-forms of $S_m^n$.

\smallskip

\subsection{Main Results and Applications}

To state the main results, let $S$ be a $\Bbbk$-form of $S_m^n$, $\pi\colon Y\to S$ the minimal resolution, and $Q\subset Y$ the unique $(-m)$-curve. In the cases $n = m+4$ and $n = m+5$, the answer is governed by an additional invariant $\ell_S$ (see Definition~\ref{def:invariant}). By definition, $\ell_S$ is the maximal cardinality of a $\gal(\bar{\Bbbk}/\Bbbk)$-invariant set of pairwise disjoint $(-1)$-curves on $Y_{\bar{\Bbbk}}$ that intersect $Q$ and can be contracted over $\Bbbk$. Roughly speaking, \(\ell_S\) measures the extent to which the geometric \((-1)\)-curves meeting \(Q\) can be simultaneously contracted over the ground field; equivalently, it measures the arithmetic obstruction to reducing \(S\) to a surface on which a cylinder can be constructed. 

The next theorem, which summarizes our first main results on rationality and cylindricity for these \(\Bbbk\)-forms, shows that two different types of arithmetic input govern the problem. For \(1\le n\le m+3\), the relevant arithmetic input is the existence of a \(\Bbbk\)-rational point on \(Q\). For \(n=m+4\) and \(n=m+5\), the decisive invariant is \(\ell_S\).

\begin{theorem}[= Theorems \ref{thm:intermediate}, \ref{thm:m+4} and \ref{thm:m+5}]\label{thm:combined}
    Let \(S\) be a \(\Bbbk\)-form of the surface \(S_{m}^{n}\) for some \(m\geq 2\), and \(Q\) the unique \((-m)\)-curve on the minimal resolution defined over \(\Bbbk\). The rationality and cylindricity of \(S\) are classified as follows:
    \begin{enumerate}[(1)]
        \item {\bf Case \(\mathbf{1\leq n\leq m+3}\).}
        \begin{itemize}
            \item [\textit{(i)}] If \(n\leq m+1\), then \(S\) is always cylindrical, and rational if and only if \(Q(\Bbbk)\neq \emptyset\).
            \item [\textit{(ii)}] If \(n=m+2\) and \(Q(\Bbbk)\neq \emptyset\), then \(S\) is cylindrical and rational.
            \item [\textit{(iii)}] If \(n=m+3\), then \(S\) is always rational, and cylindrical if \(Q(\Bbbk)\neq \emptyset\).
        \end{itemize}
        \item {\bf Case \(\mathbf{n=m+4}\).}
        \begin{itemize}
            \item [\textit{(i)}] If \(\ell_{S}\leq m\), then \(S\) is neither cylindrical nor rational.
            \item [\textit{(ii)}] If \(\ell_{S}=m+1\), then \(S\) is cylindrical and rational.
            \item [\textit{(iii)}] If \(\ell_{S}=m+2\) and \(Q(\Bbbk)\neq \emptyset\), then \(S\) is cylindrical and rational.
            \item [\textit{(iv)}] If \(\ell_{S}=m+4\) and \(m\) is odd, then \(S\) is cylindrical and rational.
        \end{itemize}
        \item {\bf Case \(\mathbf{n=m+5}\).}
       \begin{itemize}
            \item [\textit{(i)}] If \(\ell_{S}\leq m+1\), then \(S\) is neither cylindrical nor rational.
            \item [\textit{(ii)}] If \(\ell_{S}=m+2\) or \(\ell_{S}=m+6\), then \(S\) is cylindrical and rational.
            \item [\textit{(iii)}] If \(\ell_{S}=m+3\) and \(Q(\Bbbk)\neq \emptyset\), then \(S\) is cylindrical and rational.
            \item [\textit{(iv)}] If \(\ell_{S}=m+5\), then \(S\) is always rational, and cylindrical if \(Q(\Bbbk)\neq \emptyset\).
        \end{itemize}
    \end{enumerate}
\end{theorem}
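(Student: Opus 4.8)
The plan is to replace $S$ by its minimal resolution $\pi\colon Y\to S$, a smooth projective surface with $Y_{\overline{\Bbbk}}$ rational, and to transfer both questions to $Y$ equipped with the $\gal(\overline{\Bbbk}/\Bbbk)$-action on $\Pic(Y_{\overline{\Bbbk}})$ and on the finite configuration of negative curves $\{Q,E_1,\dots,E_n,F_1,\dots,F_n\}$, where $Q$ is the $(-m)$-curve, the $E_i$ are the blow-up $(-1)$-curves, $f\colon Y\to\P^1$ is the induced $\P^1$-fibration, and $F_i\sim F-E_i$. Since $\pi$ is birational and rationality is a birational invariant, $S$ is $\Bbbk$-rational if and only if $Y$ is; and by \cite{Saw23b} the cylindricity question is likewise governed by the birational geometry of $Y$. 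The three ranges $1\le n\le m+3$, $n=m+4$, and $n=m+5$ are treated separately as Theorems \ref{thm:intermediate}, \ref{thm:m+4} and \ref{thm:m+5}, but all three rest on one mechanism controlled by $\ell_S$.

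That mechanism is a contraction. By Definition \ref{def:invariant} there is a $\gal(\overline{\Bbbk}/\Bbbk)$-invariant set of $\ell_S$ disjoint $(-1)$-curves meeting $Q$ that descends to a contraction over $\Bbbk$; performing it produces a smooth $\Bbbk$-surface $Y'$ on which the image of $Q$ has self-intersection $-m+\ell_S$. First I would carry out this contraction and identify the $\Bbbk$-birational type of $Y'$ from the resulting value of $Q^2$: at the thresholds $\ell_S=m$, $m+1$, $m+2$ the curve $Q$ becomes a $0$-, $(+1)$-, or $(+2)$-curve, which respectively furnishes a $\Bbbk$-rational conic bundle $Y'\to\P^1$, a birational contraction to $\P^2_{\Bbbk}$, or a weak del Pezzo structure of controlled degree. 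The role of the hypothesis $Q(\Bbbk)\neq\emptyset$ is precisely to supply the $\Bbbk$-point needed to split the associated conic bundle or to trivialize the Severi--Brauer obstruction, thereby descending the fibration and yielding rationality.

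For cylindricity I would argue in two opposite directions. In the positive subcases I construct the cylinder by hand: the complement $S\setminus\pi_{\ast}(Q+\sum_{i=1}^{n}(E_i+F_i))\simeq\A^1\times(\A^1\setminus\{n-1\text{ points}\})$ is a cylinder over $\overline{\Bbbk}$, and the point is to replace its boundary by a $\gal(\overline{\Bbbk}/\Bbbk)$-invariant divisor supported on fibers and sections of the $\Bbbk$-rational fibration on $Y'$, so that the open complement descends to an $\A^1_{\Bbbk}$-cylinder; the value of $\ell_S$ is exactly what guarantees enough $\Bbbk$-rational curves to do so. In the negative subcases, namely case \textit{(i)} of both $n=m+4$ and $n=m+5$, where $\ell_S$ is too small, the form is so twisted that its Picard rank collapses to one, every cylinder would then be anticanonical polar, and the obstruction of \cite{KKW25} --- the absence of anticanonical polar cylinders on the relevant geometric surface --- forbids any $\Bbbk$-cylinder.

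The main obstacle I anticipate is the non-rationality half of case \textit{(i)} for $n=m+4$ and $n=m+5$. There I must prove that no chain of $\Bbbk$-rational contractions can reach $\P^2_{\Bbbk}$, which reduces to a precise determination of the Galois-module structure of $\Pic(Y_{\overline{\Bbbk}})$ in terms of how $\gal(\overline{\Bbbk}/\Bbbk)$ permutes the $(-1)$-curves abutting $Q$, followed by verifying that the cohomological birational invariant $H^1(\gal(\overline{\Bbbk}/\Bbbk),\Pic(Y_{\overline{\Bbbk}}))$ is nonzero, which obstructs $\Bbbk$-rationality. The combinatorics of these curve configurations --- in effect, showing that $\ell_S\le m$ (respectively $\ell_S\le m+1$) genuinely prevents a $\Bbbk$-rational conic-bundle or del Pezzo reduction carrying a rational point --- is the technical heart, and it is exactly this bookkeeping that the invariant $\ell_S$ is designed to encode.
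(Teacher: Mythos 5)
Your opening move --- contract the $\gal(\overline{\Bbbk}/\Bbbk)$-invariant set of $\ell_S$ disjoint $(-1)$-curves meeting $Q$ and read off the $\Bbbk$-birational type of the resulting surface from $Q^2=-m+\ell_S$ --- is exactly the paper's strategy, and your description of the positive cases (descend to a $\Bbbk$-form of $\F_m$, $\P^1\times\P^1$ or $\P^2$, use $Q(\Bbbk)\neq\emptyset$ to trivialize the form, and replace the geometric boundary of the cylinder by a Galois-invariant configuration of sections and fibers) matches Theorems \ref{thm:intermediate}, \ref{thm:m+4} and \ref{thm:m+5} in outline, though the paper needs a genuinely case-by-case construction of those invariant boundaries (Examples \ref{ex:(0)}--\ref{ex:(4)} and the sub-cases on whether the points $\msp_i$ lie on the chosen rulings).

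The genuine gap is in the negative direction, i.e.\ case \textit{(i)} of $n=m+4$ and $n=m+5$. First, your claim that for small $\ell_S$ ``the Picard rank collapses to one, every cylinder would then be anticanonical polar, and the obstruction of \cite{KKW25} forbids any $\Bbbk$-cylinder'' is not tenable: after contracting the $\ell_S$ curves the paper's surface $Z$ carries a conic bundle $\psi\colon Z\to B$ induced by $|E_1+E_1'|$ (resp.\ by $Q+E_0\sim -K_Y$), so $\rho_{\Bbbk}(Z)=2$ rather than $1$, and the \cite{KKW25} result only concerns anticanonical polar cylinders on $S_{2u-1}^{2u+3}$ (so $n=m+4$, $m$ odd, Picard rank one); it says nothing about even $m$, about $n=m+5$, or about cylinders on forms of higher rank. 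Second, replacing this by nonvanishing of $H^1(\gal(\overline{\Bbbk}/\Bbbk),\Pic(Y_{\overline{\Bbbk}}))$ does not close the gap: that invariant obstructs rationality when it is nonzero, but it is not forced to be nonzero by $\ell_S\le m$, you give no computation showing it is, and even if it vanished you could conclude nothing; it also gives no information about cylindricity. What the paper actually does is prove (Lemma \ref{lem:not rational}, via a Riemann--Roch argument excluding contractible multisections of $\psi$) that $Z$ is $\Bbbk$-minimal with $(-K_Z)^2=-m+4+\ell_S\le 4$ (resp.\ $\ell_S-m+3\le 4$), and then invokes the Iskovskikh-type criterion recorded as Theorem \ref{thm:Saw cyl iff}, which rules out rationality and cylindricity simultaneously for $\Bbbk$-minimal geometrically rational surfaces of degree $\le 4$, transferring back to $S$ via Theorem \ref{thm:Saw cyl bir}. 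Without this minimality step and this criterion (or a worked-out substitute), your argument does not establish either non-rationality or non-cylindricity.
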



\smallskip

Several noteworthy consequences follow. In particular, when $m$ is odd, Lemma \ref{lem:Fm} implies that $Q(\Bbbk)\neq\emptyset$. Hence, every $\Bbbk$-form of $S_{2u-1}^n$ with $1\le n\le 2u+2$ is both cylindrical and rational (Corollary \ref{cor:interm}). For $\Bbbk$-forms of $S^{2u+3}_{2u-1}$, cylindricity and rationality are equivalent and characterized by $\ell_S \geq 2u$ (Corollary \ref{cor:m+4}). Likewise, for $\Bbbk$-forms of Picard rank one with $Q(\Bbbk)\neq\emptyset$, cylindricity occurs precisely when $n\le m+3$ (Corollary \ref{cor:rank1}), which may be viewed as a weighted analogue of the smooth case treated in \cite{DK18}.

\smallskip

We also study $\Bbbk$-forms of $S_m^{m+4}$. When the base field is the complex number field $\C$, it is known that $S_{2u-1}^{2u+3}$ and $S_{2u}^{2u+4}$ are isomorphic to a weighted complete intersection of two hypersurfaces of degree \(2u\) in \(\P(1,1,u,u,2u-1)\) and a weighted hypersurface of degree \(2u+2\) embedded in \(\P(1,1,u,u+1)\), respectively. We extend this description to arbitrary fields of characteristic zero (see Theorem \ref{thm:embedding}). 

\smallskip

Combining Theorems \ref{thm:combined} and \ref{thm:embedding} with \cite[Lemma 3]{DK18}, we obtain the following: 

\begin{theorem}\label{thm:Fano}
There exists a non-isotrivial Fano threefold \(X\to \mathbb P^1_{\C}\) over $\C$ admitting a vertical cylinder, whose generic fiber is a \(\mathbb C(\mathbb P^1_{\C})\)-form of \(S_{2u-1}^{2u+3}\), where $\C(\mathbb P^1_{\C})$ is the function field of $\mathbb{P}^1_{\C}$. 
\end{theorem}

Thus, our classification serves not only as a birational description of weighted singular del Pezzo surfaces over non-closed fields, but also as a practical tool for constructing cylinders in higher dimensions.

\smallskip

\subsection{Organization of this paper}

The paper is organized as follows. In Section \ref{sect:prelim}, we review basic facts on $\Bbbk$-forms and cylinders. Section \ref{sect:main1} proves the main theorem by reducing each case to one of the model cylinder configurations from Examples \ref{ex:(0)}--\ref{ex:(4)}. 
In Section \ref{sect:main2}, we realize $\Bbbk$-forms of $S_m^{m+4}$ as weighted complete intersections or weighted hypersurfaces. Finally, in Section \ref{sect:examples}, we give explicit examples with prescribed values of $\ell_S$ and apply the classification to construct a non-isotrivial Fano threefold carrying a vertical cylinder.

\section{Preliminaries}\label{sect:prelim}
Throughout this paper, we work over a field \(\Bbbk\) of characteristic zero.


\begin{notation}
We use the following notation: 
\begin{itemize}
\item $\A ^d_{\Bbbk}$: the affine space of dimension $d$ defined over a field $\Bbbk$. 
\item $\P ^d_{\Bbbk}$: the projective space of dimension $d$ defined over a field $\Bbbk$. 
\item $\P(a_0,a_1,\dots,a_d)$: the weighted projective space of weights $a_0,a_1,\dots,a_d$. 
\item $\F _m$: the Hirzebruch surface of degree $m$; i.e., $\F_m = \P(\mathcal{O}_{\P^1}\oplus\mathcal{O}_{\P^1}(-m))$. 
\item $S_m^n$: the surface obtained by blowing up $\P(1,1,m)$ at $n$ points in general position. 
\item $K_X$: the canonical divisor on a variety $X$. 
\item $\rho_{\Bbbk}(X)$: the Picard rank of a variety $X$ defined over $\Bbbk$. 
\item $\Bbbk(X)$: the function field of a variety $X$ over $\Bbbk$. 
\item $D_1\sim D_2$: $D_1$ and $D_2$ are linearly equivalent. 
\item $D_1 \cdot D_2$: the intersection number of $D_1$ and $D_2$. 
\item $D^2$: the self-intersection number of $D$. 
\item $I_{\mathsf{p}}(C,C')$: the intersection multiplicity of $C$ and $C'$ at $\mathsf{p}$. 
\item $\varphi ^{-1}_{\ast}(D)$: the strict transform of $D$ by a  birational morphism $\varphi$. 
\item $\psi _{\ast}(D)$: the direct image of $D$ by a birational morphism $\psi$. 
\item $\Supp (D)$: the support of $D$. 
\item $|D|$: the complete linear system of $D$. 
\item $\bar{\Bbbk}$: an algebraic closure of \(\Bbbk\).
\item $X_{\bar{\Bbbk}}$: the base extension of a variety $X$ defined over $\Bbbk$ to $\bar{\Bbbk}$. 
\item $\varphi_{\bar{\Bbbk}}$: the base extension of a morphism $\varphi$ over $\Bbbk$ to $\bar{\Bbbk}$. 
\item $X(\Bbbk)$: the set of all $\Bbbk$-rational points on a variety $X$. 
\end{itemize}
\end{notation}

Let $\varphi\colon X \to Y$ be a surjective morphism between smooth projective varieties defined over $\Bbbk$. In this article, we say that $\varphi$ is a $\P^1$-fibration and a conic bundle if a general fiber and every fiber of the base extension $\varphi_{\bar{\Bbbk}}\colon X_{\bar{\Bbbk}} \to Y_{\bar{\Bbbk}}$ are isomorphic to $\P^1_{\bar{\Bbbk}}$ and to the plane conic (not necessarily irreducible), respectively. 

We next recall some basic facts about $\Bbbk$-forms, which will be used to relate the geometry of our surfaces over $\bar{\Bbbk}$ to their geometry over $\Bbbk$.

\subsection{$\Bbbk$-forms}\label{subsect:forms}
Let \(\bar{\Bbbk}\) be an algebraic closure of \(\Bbbk\). For a variety $X$ defined over $\bar{\Bbbk}$ and a variety $Y$ defined over $\Bbbk$, we say that $Y$ is a {\it $\Bbbk$-form} of $X$ if the base extension of $Y$ to $\bar{\Bbbk}$ is isomorphic to $X$, i.e., $Y_{\bar{\Bbbk}}\coloneqq Y\times_{\Spec(\Bbbk)} \Spec(\bar{\Bbbk}) \simeq X$. A closed point $\msp\in X$ on a variety $X$ over $\bar{\Bbbk}$ is a {\it $\Bbbk$-rational point} if $\msp$ is defined over $\Bbbk$, in other words, $\msp$ is invariant under the action of ${\rm Gal}(\bar{\Bbbk}/\Bbbk)$. For a variety $X$ over $\Bbbk$, $X(\Bbbk)$ denotes the set of all $\Bbbk$-rational points on $X$. In this subsection, we summarize several results on $\Bbbk$-forms. 

\smallskip

The following result is the Ch\^{a}telet theorem. 

\begin{theorem}[{cf. \cite[Proposition 4.5.10]{Poo17}}]\label{chatelet}
Let $X$ be a $\Bbbk$-form of the $n$-dimensional projective space. Then $X \simeq \P^n_{\Bbbk}$ if and only if $X$ has a $\Bbbk$-rational point. 
\end{theorem}

The following lemmas are well known to experts; however, for the reader's convenience, we provide their proofs.

\begin{lemma}\label{lem:two points}
Let $C$ be a \(\Bbbk\)-form of the projective line. 
Then there exist two points $\msp_1$ and $\msp_2$ on $C_{\bar{\Bbbk}} \simeq \P^1_{\bar{\Bbbk}}$ such that the union $\msp_1+\msp_2$ is defined over $\Bbbk$. 
\end{lemma}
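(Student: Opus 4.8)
The plan is to produce a single effective divisor $D$ of degree $2$ on $C$ that is defined over $\Bbbk$. Once such a $D$ is found, its base change $D_{\overline{\Bbbk}}$ is an effective divisor of degree $2$ on $C_{\overline{\Bbbk}}\simeq \P^1_{\overline{\Bbbk}}$, hence of the form $\msp_1+\msp_2$ for two points $\msp_1,\msp_2$; and since $D$ is $\gal(\overline{\Bbbk}/\Bbbk)$-invariant, so is the sum $\msp_1+\msp_2$, which is exactly the assertion. The natural candidate for the class of $D$ is the anticanonical class $-K_C$: the canonical divisor is intrinsic and hence defined over $\Bbbk$, and since $C_{\overline{\Bbbk}}\simeq \P^1_{\overline{\Bbbk}}$ the curve $C$ is a smooth projective geometrically integral curve of genus $0$, so that $\deg(-K_C)=2-2g=2$.

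First I would compute the dimension of the complete linear system $\lvert -K_C\rvert$ over $\Bbbk$. By Riemann--Roch on the genus-$0$ curve $C$, together with Serre duality, which gives $h^1(C,\OO_C(-K_C))=h^0(C,\OO_C(2K_C))=0$ since $\deg(2K_C)=-4<0$, one obtains
\[
h^0(C,\OO_C(-K_C))=\deg(-K_C)+1-g=3 .
\]
Consequently the complete linear system $\lvert -K_C\rvert$, viewed as the space of effective divisors in this class, is isomorphic as a $\Bbbk$-scheme to the projectivization $\P(H^0(C,\OO_C(-K_C)))\simeq \P^2_{\Bbbk}$, a projective plane over $\Bbbk$.

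The key step is that $\P^2_{\Bbbk}$ always possesses a $\Bbbk$-rational point (for instance $[1:0:0]$), irrespective of the arithmetic of $\Bbbk$. Choosing such a point of $\lvert -K_C\rvert$ yields a nonzero global section of $\OO_C(-K_C)$, well defined up to a scalar in $\Bbbk^{\times}$, whose zero locus is an effective divisor $D$ of degree $2$ defined over $\Bbbk$; feeding this $D$ into the reduction of the first paragraph completes the argument. I expect the main conceptual obstacle to be precisely this passage from a divisor \emph{class} defined over $\Bbbk$ to an honest \emph{divisor} defined over $\Bbbk$: over a non-closed field an effective representative could a priori only be defined over an extension, and it is the combination $h^0=3>0$ with the fact that the associated linear system is a \emph{full} projective space over $\Bbbk$ that removes this difficulty. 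An equivalent, more geometric route would be to use the anticanonical morphism to embed $C$ as a smooth conic in $\P^2_{\Bbbk}$ and then intersect it with any line defined over $\Bbbk$, the resulting length-$2$ subscheme being the desired divisor.
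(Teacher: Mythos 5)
Your proof is correct and follows essentially the same route as the paper: both take a member of $|-K_C|$ defined over $\Bbbk$ (using that $-K_C$ is intrinsic, hence Galois-invariant, of degree $2$) and then decompose it over $\overline{\Bbbk}$ into two points whose union is Galois-stable. The only difference is that you spell out, via Riemann--Roch and the fact that $|-K_C|$ is a full projective space over $\Bbbk$, why a $\Bbbk$-rational member exists, a step the paper leaves implicit.
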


\begin{proof}
Since $C$ is a $\Bbbk$-form of $\P^1$, we know $\OO_C(-K_C) \simeq \OO_C(2)$ and $\deg(-K_C)=2$. Hence, $|-K_C|\not=\emptyset$. 
Take a divisor $\msp \in |-K_C|$ defined over $\Bbbk$. 
Since $\msp_{\bar{\Bbbk}} \in |-K_{C_{\bar{\Bbbk}}}|$, there exist two points $\msp_1$ and $\msp_2$ on $C_{\bar{\Bbbk}} \simeq \P^1_{\bar{\Bbbk}}$ such that $\msp_{\bar{\Bbbk}}=\msp_1+\msp_2$. Since $\msp_{\bar{\Bbbk}}$ is defined over $\Bbbk$, so is $\msp_1+\msp_2$. 
\end{proof}

\begin{lemma}[{cf. \cite[Proposition 1.7]{KSC}}]\label{lem:odd points}
Let $C$ be a \(\Bbbk\)-form of the projective line. 
Assume that $C_{\bar{\Bbbk}}$ has an odd number of points whose union is defined over \(\Bbbk\). Then $C(\Bbbk)\neq\emptyset$. Moreover, $C \simeq \P^1_{\Bbbk}$. 
\end{lemma}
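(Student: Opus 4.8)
The plan is to exploit the degree homomorphism on the Picard group together with the fact that $-K_C$ has even degree, so that an odd-degree $\Bbbk$-rational divisor forces the existence of a $\Bbbk$-rational divisor class of degree one. Since $C$ is a $\Bbbk$-form of $\P^1$, base change gives $\Pic(C_{\overline{\Bbbk}})\simeq \Z$ via the degree map, and the canonical class $K_C$ is defined over $\Bbbk$. First I would consider the image of the degree map $\deg\colon \Pic(C)\to \Z$ restricted to the subgroup of $\Bbbk$-rational divisor classes; since $\deg$ is a group homomorphism, this image is a subgroup of $\Z$.

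The anticanonical class $-K_C$ is defined over $\Bbbk$ and satisfies $\deg(-K_C)=2$, so $2$ lies in this image. By hypothesis there is a reduced effective divisor on $C_{\overline{\Bbbk}}$ consisting of an odd number of geometric points whose union is $\gal(\overline{\Bbbk}/\Bbbk)$-invariant; this union is a $\Bbbk$-rational divisor $P$ of odd degree $d$. Hence the image contains both $2$ and the odd integer $d$, and since $\gcd(2,d)=1$ it contains $1$. Therefore there is a $\Bbbk$-rational divisor class $D$ on $C$ of degree one; concretely $D=aP+b(-K_C)$ for integers $a,b$ with $ad+2b=1$.

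It remains to upgrade the class $D$ to an actual $\Bbbk$-rational point. I would compute $h^0(C,\OO_C(D))$: after base change $\OO_{C_{\overline{\Bbbk}}}(D)\simeq \OO_{\P^1}(1)$, so by flat base change $\dim_{\Bbbk}H^0(C,\OO_C(D))=2$ (equivalently, Riemann--Roch over $\overline{\Bbbk}$ gives $\chi(D)=\deg D+1=2$, with $h^1=0$ because $\deg(K_C-D)<0$). A nonzero global section over $\Bbbk$ then cuts out an effective divisor $D'\sim D$ defined over $\Bbbk$ of degree one. An effective $\Bbbk$-rational divisor of degree one on a smooth curve is necessarily a single closed point with residue field $\Bbbk$, that is, a $\Bbbk$-rational point, and hence $C(\Bbbk)\neq\emptyset$.

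The only delicate point is the final descent step: one must ensure that the section producing the effective divisor can be chosen over $\Bbbk$ rather than merely over $\overline{\Bbbk}$, which is exactly what flat base change for $H^0$ guarantees once $D$ is known to be a genuine $\Bbbk$-rational class. Everything else reduces to the elementary arithmetic of the image of the degree map, so I expect this descent to be the main (though standard) obstacle.
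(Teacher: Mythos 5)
Your proposal is correct and follows essentially the same route as the paper: the paper twists the odd-degree $\Bbbk$-rational divisor by a multiple of $K_C$ to produce a degree-one class defined over $\Bbbk$ and then represents it by a $\Bbbk$-rational point, which is exactly your B\'ezout combination $aP+b(-K_C)$ specialized to $a=1$. Your explicit justification of the final descent step (flat base change for $H^0$ yielding an effective degree-one $\Bbbk$-rational divisor, hence a $\Bbbk$-point) is a welcome elaboration of a step the paper leaves implicit.
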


\begin{proof}
Assume that there exist points $\msp_1,\dots,\msp_{2n+1}$ on $C_{\bar{\Bbbk}} \simeq \P^1_{\bar{\Bbbk}}$ such that the union $\msp \coloneqq \msp_1+\dots+\msp_{2n+1}$ is defined over \(\Bbbk\), where $n$ is a non-negative integer. Then the divisor $\msp+nK_{C_{\bar{\Bbbk}}}$ on $C_{\bar{\Bbbk}}$ is defined over \(\Bbbk\), since $-K_{C_{\bar{\Bbbk}}}$ is defined over \(\Bbbk\). Since $\OO_{C_{\bar{\Bbbk}}}(\msp+nK_{C_{\bar{\Bbbk}}}) \simeq \OO_{C_{\bar{\Bbbk}}}(1)$, there exists a point $\msp_0$ on $C_{\bar{\Bbbk}}$ such that $\msp_0 \sim \msp+nK_{C_{\bar{\Bbbk}}}$ so that $\msp_0$ is defined over \(\Bbbk\). This implies that $C$ is a \(\Bbbk\)-form of the projective line with a \(\Bbbk\)-rational point $\msp_0$. Thus, we obtain that $C \simeq \P^1_{\Bbbk}$ by Theorem \ref{chatelet}. 
\end{proof}

\begin{lemma}\label{lem:line conic}
Let $S$ be a \(\Bbbk\)-form of the projective plane. 
If $S_{\bar{\Bbbk}}$ contains a curve $C$ of degree $\le 2$ defined over $\Bbbk$, then $S \simeq \P^2_{\Bbbk}$. 
\end{lemma}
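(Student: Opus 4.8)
The plan is to use the Châtelet theorem (Theorem~\ref{chatelet}): since $S$ is a $\Bbbk$-form of $\P^2$, it suffices to produce a $\Bbbk$-rational point on $S$. I would split the argument according to the degree of the given curve $C$, treating the cases $\deg C = 1$ and $\deg C = 2$ separately, since a curve of degree $\le 2$ on $\P^2$ is either a line (or a degenerate conic consisting of two lines or a double line) or a smooth conic.

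First I would dispose of the easy case where $C$ has a component that is a line, i.e. $C_{\overline{\Bbbk}}$ contains a curve isomorphic to $\P^1_{\overline{\Bbbk}}$ with self-intersection $1$. If $\deg C = 1$, then $C$ is itself a $\Bbbk$-form of $\P^1$; I would apply Lemma~\ref{lem:odd points}, since a line meets another line in a single point, producing an odd ($=1$) number of points whose union is Galois-invariant, hence a $\Bbbk$-rational point on $C \subseteq S$. More directly, two distinct $\Bbbk$-lines intersect in one $\Bbbk$-rational point; even a single line $C$ gives $\Bbbk$-rational points because $C \simeq \P^1_{\Bbbk}$ by the preceding lemmas, and then any point of $C(\Bbbk)$ is a $\Bbbk$-rational point of $S$. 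So in the linear case $S \simeq \P^2_{\Bbbk}$ follows from Châtelet.

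The genuinely substantive case is $\deg C = 2$, where $C$ is a (possibly singular) conic defined over $\Bbbk$. The key observation I would exploit is that a conic in $\P^2$ is itself a $\Bbbk$-form of $\P^1$: its anticanonical (degree $-K_S|_C$) or its embedding shows $C_{\overline{\Bbbk}} \simeq \P^1_{\overline{\Bbbk}}$, and $C$ is defined over $\Bbbk$. I would then intersect $C$ with a suitable $\Bbbk$-rational divisor class to extract an odd-degree zero-cycle on $C$ and invoke Lemma~\ref{lem:odd points}. Concretely, a line class $H$ on $S_{\overline{\Bbbk}} \simeq \P^2_{\overline{\Bbbk}}$ restricts to a degree-$2$ divisor on the conic, but I would instead use that on the conic the hyperplane class has odd intersection behaviour after accounting for the self-intersection: more reliably, one argues that $C$ has a $\Bbbk$-rational point directly. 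Since $C \simeq C_{\overline{\Bbbk}} \simeq \P^1_{\overline{\Bbbk}}$ and is a $\Bbbk$-form of the line, by Lemma~\ref{lem:two points} there is a Galois-stable pair $\msp_1 + \msp_2$ on $C$; this alone gives an even cycle, so the crux is to produce an \emph{odd} cycle. I would obtain this by intersecting $C$ with a line defined over $\Bbbk$ meeting it with odd total multiplicity, for instance a tangent line (intersection multiplicity counting) or by using that $\deg(-K_S)|_C$ together with the conic's own degree yields an odd number.

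The main obstacle will be handling the degree-$2$ case cleanly, namely guaranteeing the existence of an \emph{odd}-degree Galois-invariant zero-cycle on the conic $C$ so that Lemma~\ref{lem:odd points} applies and forces $C(\Bbbk) \neq \emptyset$. The delicate point is that a smooth conic without a rational point (such as a conic with no real points over $\R$) is a nontrivial $\Bbbk$-form of $\P^1$, so one cannot merely assert a rational point exists — one must genuinely produce the odd cycle from the ambient geometry of $S$. I expect the resolution to come from the fact that $C$ embeds in $S_{\overline{\Bbbk}} \simeq \P^2$ as a conic, and that the self-intersection and the restriction of the $\Bbbk$-rational hyperplane class, when combined with the singular or tangential configuration available over $\Bbbk$, yield a point or a zero-cycle of odd length stable under Galois; applying Lemma~\ref{lem:odd points} then gives $C(\Bbbk)\neq\emptyset$, and hence $S(\Bbbk)\neq\emptyset$, so $S \simeq \P^2_{\Bbbk}$ by Theorem~\ref{chatelet}.
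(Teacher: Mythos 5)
Your plan for the degree-$2$ case does not work, and the obstruction is exactly the one you flag yourself: a conic $C$ defined over $\Bbbk$ inside $S\simeq\P^2_{\Bbbk}$ need not have a $\Bbbk$-rational point (e.g.\ $x^2+y^2+z^2=0$ over $\R$), so the statement you are ultimately trying to establish in that case, $C(\Bbbk)\neq\emptyset$, is false in general. Moreover, the odd-degree Galois-invariant zero-cycle on $C$ that you hope to extract from the ambient geometry cannot exist via intersection with divisors: for any divisor $D$ on $S_{\overline{\Bbbk}}\simeq\P^2_{\overline{\Bbbk}}$ one has $\deg(D|_C)=2\deg(D)$, which is always even, and likewise $\deg(-K_{S_{\overline{\Bbbk}}}|_C)=6$ and $C^2=4$ are even, so a tangent line or the anticanonical class gives you nothing odd. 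The missing idea --- and the paper's actual argument --- is not to look for a point on $C$ at all but to pass to the residual class: $\OO_{S_{\overline{\Bbbk}}}(-C-K_{S_{\overline{\Bbbk}}})\simeq\OO_{S_{\overline{\Bbbk}}}(1)$ is defined over $\Bbbk$ and effective, so there is a \emph{line} $C''$ defined over $\Bbbk$, and one then concludes by the degree-$1$ case.

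Your degree-$1$ case is right in spirit but also gapped as written: the claim that $C\simeq\P^1_{\Bbbk}$ ``by the preceding lemmas'' is unjustified (Lemma~\ref{lem:two points} only produces a Galois-stable \emph{pair} of points, an even cycle), and ``a line meets another line in a single point'' presupposes a second line defined over $\Bbbk$, whose existence you never establish. The paper instead intersects $C$ with a member $C'$ of $|-K_{S_{\overline{\Bbbk}}}|$ defined over $\Bbbk$ (a cubic), obtaining three points whose union is Galois-invariant, and applies Lemma~\ref{lem:odd points}. Your version is repairable --- for instance, $\OO_{S}(C)|_C$ is a degree-one line bundle on $C$ defined over $\Bbbk$, hence has a section whose zero locus is a $\Bbbk$-point, or one can produce a second $\Bbbk$-line from the linear system $|C|$ over $\Bbbk$ --- but as it stands the key step is asserted rather than proved, and the degree-$2$ case needs the reduction to a line rather than an odd cycle on the conic.
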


\begin{proof}
Let us first prove the case $\deg(C)=1$. Note that $-K_{S_{\bar{\Bbbk}}}$ is defined over \(\Bbbk\) and $\OO_{S_{\bar{\Bbbk}}}(-K_{S_{\bar{\Bbbk}}}) \simeq \OO_{S_{\bar{\Bbbk}}}(3)$. Hence, there exists a curve $C'$ on $S_{\bar{\Bbbk}}$ such that $C' \sim -K_{S_{\bar{\Bbbk}}}$ so that $C'$ is defined over \(\Bbbk\). We may assume that $C$ and $C'$ intersect exactly three points. By Lemma \ref{lem:odd points}, $C$ has a $\Bbbk$-rational point. Hence, $S \simeq \P^2_{\Bbbk}$ by Theorem \ref{chatelet}. 

Next, let us prove the case $\deg(C)=2$. Since $\OO_{S_{\bar{\Bbbk}}}(-C-K_{S_{\bar{\Bbbk}}}) \simeq \OO_{S_{\bar{\Bbbk}}}(1)$, there exists a curve $C''$ on $S_{\bar{\Bbbk}}$ such that $C'' \sim -C-K_{S_{\bar{\Bbbk}}}$ so that $C''$ is defined over \(\Bbbk\). 
Then $\deg(C'')=1$. Hence, we obtain that $S \simeq \P^2_{\Bbbk}$ by the same argument as above. 
\end{proof}

\begin{lemma}\label{lem:p1p1}
Let $S$ be a $\Bbbk$-form of $\P^1\times\P^1$. If $S$ has a $\Bbbk$-rational point, then $S$ is rational. 
\end{lemma}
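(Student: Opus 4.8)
The plan is to realize $S$ as a smooth quadric surface in $\P^3_{\Bbbk}$ and then deduce rationality by projecting from the given $\Bbbk$-rational point. Write $\Gamma = \gal(\overline{\Bbbk}/\Bbbk)$, fix an identification $S_{\overline{\Bbbk}} \simeq \P^1 \times \P^1$, and let $f_1, f_2 \in \Pic(S_{\overline{\Bbbk}}) = \Z f_1 \oplus \Z f_2$ be the classes of the two rulings. Since $\Gamma$ acts on $\Pic(S_{\overline{\Bbbk}})$ preserving the intersection form and the canonical class $-K_{S_{\overline{\Bbbk}}} = 2f_1 + 2f_2$, it must permute the two rulings $\{f_1, f_2\}$; in either case (fixing both, or swapping them) the class $f_1 + f_2$, i.e. $\OO(1,1)$, is $\Gamma$-invariant.

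The crucial point is that the $\Bbbk$-rational point $p$ upgrades this invariant class to an honest $\Bbbk$-divisor. Through $p$ there passes a unique member $L_1$ of the ruling $|f_1|$ and a unique member $L_2$ of $|f_2|$. As $p$ is fixed by $\Gamma$ and $\Gamma$ permutes $\{f_1, f_2\}$, the set $\{L_1, L_2\}$ of ruling members through $p$ is $\Gamma$-stable, so the effective divisor $D \coloneqq L_1 + L_2$, of class $f_1 + f_2$, is defined over $\Bbbk$. Consequently $\OO_S(D)$ is a line bundle over $\Bbbk$ whose base change to $\overline{\Bbbk}$ is $\OO(1,1)$. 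Because $\OO(1,1)$ is very ample with $h^0 = 4$, the complete linear system $|D|$ defines a closed $\Bbbk$-embedding $S \hookrightarrow \P^3_{\Bbbk}$ whose image is a smooth quadric surface $\mathcal{Q}$.

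I would conclude by projecting $\mathcal{Q}$ from the image of $p$. A general line of $\P^3_{\Bbbk}$ through $p$ meets the quadric $\mathcal{Q}$ at $p$ together with exactly one further point, so linear projection from $p$ gives a birational map $\mathcal{Q} \dashrightarrow \P^2_{\Bbbk}$ defined over $\Bbbk$, whose inverse sends a point of the target plane to the residual intersection of the line it spans with $p$. Hence $S \simeq \mathcal{Q}$ is rational over $\Bbbk$.

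The step requiring the most care is the descent in the middle paragraph: one must ensure that the $\Gamma$-invariant class $f_1 + f_2$ is represented by a line bundle over $\Bbbk$, not merely over $\overline{\Bbbk}$. In general such an invariant class carries a Brauer obstruction, but the presence of the rational point $p$ lets me write down the explicit $\Gamma$-invariant effective representative $D = L_1 + L_2$, thereby bypassing any Brauer-group input. I will also verify that, even when $\Gamma$ swaps the two rulings, $L_1$ and $L_2$ are genuinely the unique ruling members through the fixed point $p$, so that $\{L_1, L_2\}$—and hence $D$—descends to $\Bbbk$.
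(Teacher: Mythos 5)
Your proof is correct, but it takes a genuinely different route from the paper. The paper blows up the $\Bbbk$-rational point to obtain a del Pezzo surface of degree $7$, contracts the Galois-stable disjoint pair of $(-1)$-curves meeting the exceptional curve, and then must identify the resulting $\Bbbk$-form of $\P^2$ as the trivial one; this last step relies on the fact (citing Iskovskikh) that a nontrivial form of $\P^2$ has Picard group generated by $-K$, which is incompatible with the presence of the pushed-forward exceptional class. You instead descend the class $f_1+f_2$ directly, using the rational point to produce the explicit Galois-stable effective representative $L_1+L_2$, embed $S$ as a smooth quadric in $\P^3_{\Bbbk}$, and conclude by stereographic projection. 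Your descent argument is sound: Galois permutes the two rulings since it preserves the intersection form, effectivity and $K$, so the pair of ruling members through the fixed point is Galois-stable, and the invariant effective divisor descends without any Brauer-group input; the complete linear system of the descended bundle base-changes to $|\OO(1,1)|$, so the map over $\Bbbk$ is a closed embedding by faithfully flat descent. It is worth noting that the two arguments produce essentially the same birational map --- projection of a quadric from a point factors as the blow-up at that point followed by contraction of the strict transforms of the two rulings through it --- but your justification is more elementary and self-contained, bypassing the classification of $\Bbbk$-minimal forms of $\P^2$, whereas the paper's version fits the contraction-of-Galois-orbits template it uses throughout Section 3.
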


\begin{proof}
By assumption, there exists a $\Bbbk$-rational point $\msp$ on $S$. Let $\varphi\colon S'\to S$ be the blow-up at $\msp$, and $E$ the exceptional curve of $\varphi$. Then $S'$ is a smooth del Pezzo surface of degree $7$. Hence, there exist two $(-1)$-curves $E_1$ and $E_2$ on $S'_{\bar{\Bbbk}}$ such that $E_1 \cdot E=E_2 \cdot E=1$ and $E_1 \cdot E_2=0$. Since the disjoint union $E_1+E_2$ is defined over $\Bbbk$, we obtain the contraction $\psi\colon S' \to S''$ of $E_1+E_2$. Then $S''$ is a $\Bbbk$-form of $\P^2$. Moreover, since $\psi_{\ast}(E)$ is of degree one, we obtain $S'' \simeq \P^2_{\Bbbk}$ by Lemma \ref{lem:line conic}. Hence, $S$ is rational.  
\end{proof}

\begin{lemma}\label{lem:Fm}
Let $m$ be a positive integer, and $S_m$ a $\Bbbk$-form of the Hirzebruch surface $\F_m$ of degree $m$. 
Then we have the following: 
\begin{enumerate}
\item If $m$ is odd, then $S_m$ is the Hirzebruch surface of degree $m$, i.e., $S_m \simeq \P_{\Bbbk}(\OO_{\P^1}\oplus\OO_{\P^1}(-m))$ over $\Bbbk$. 
\item When $m$ is even, $S_m$ is the Hirzebruch surface $\F_m$ of degree $m$ if and only if the unique negative section $Q$ on $(S_m)_{\bar{\Bbbk}}$ has a $\Bbbk$-rational point. 
\end{enumerate}
\end{lemma}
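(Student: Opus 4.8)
The plan is to analyze the $\Bbbk$-form $S_m$ through its unique negative section $M_m$, the defining invariant of a Hirzebruch surface. Over $\overline{\Bbbk}$, the surface $(S_m)_{\overline{\Bbbk}} \simeq \F_m$ has a unique irreducible curve $M_m$ with $M_m^2 = -m$; by uniqueness this curve is preserved by every element of $\gal(\overline{\Bbbk}/\Bbbk)$, so $M_m$ descends to a curve defined over $\Bbbk$, and $M_m$ is itself a $\Bbbk$-form of $\P^1$. The key structural fact I would exploit is that $S_m$ carries a $\P^1$-fibration $f \colon S_m \to B$ over a $\Bbbk$-form $B$ of $\P^1$ (the fibration defined over $\Bbbk$ coming from the unique ruling over $\overline{\Bbbk}$), and that $M_m$ is a section of this fibration; in particular the existence of a $\Bbbk$-rational point on $M_m$ produces one on $B$, whence $B \simeq \P^1_{\Bbbk}$ by Theorem \ref{chatelet}.

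For part \textit{(1)}, where $m$ is odd, I would argue that $M_m$ automatically has a $\Bbbk$-rational point. Applying Lemma \ref{lem:two points}, there is a degree-$2$ divisor on $M_m$ defined over $\Bbbk$; intersecting a fiber class with the section, or more directly examining the intersection of $M_m$ with members of the fibration, yields divisors on $M_m$ of various degrees. The point is to produce an odd-degree $\Bbbk$-rational $0$-cycle on $M_m$: since $M_m^2 = -m$ is odd, I expect a parity argument (for instance, restricting an appropriate $\Bbbk$-divisor class to $M_m$ and computing its degree modulo $2$) to furnish a $\Bbbk$-rational effective divisor of odd degree on $M_m$, so that Lemma \ref{lem:odd points} forces $M_m(\Bbbk) \neq \emptyset$. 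Once $M_m$ has a $\Bbbk$-rational point, so does $B$, and then I would reconstruct $S_m$ as a $\P^1$-bundle over $\P^1_{\Bbbk}$: the section $M_m \simeq \P^1_{\Bbbk}$ together with the $\Bbbk$-rational ruling determines $S_m$ as the projectivization $\P(\OO_{\P^1}\oplus\OO_{\P^1}(-m))$, using that a ruled surface over $\P^1_{\Bbbk}$ with a section is a projective bundle in the Zariski-locally-trivial sense.

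For part \textit{(2)}, where $m$ is even, the parity argument no longer guarantees a point on $M_m$, so the condition $M_m(\Bbbk) \neq \emptyset$ becomes genuinely necessary. The forward direction is immediate: if $S_m \simeq \F_m$ over $\Bbbk$ then its negative section is the standard one and manifestly has $\Bbbk$-rational points. For the converse, assuming $M_m(\Bbbk) \neq \emptyset$ I would run the same reconstruction as in part \textit{(1)}: the $\Bbbk$-point on $M_m$ maps to a $\Bbbk$-point on $B$, giving $B \simeq \P^1_{\Bbbk}$, and then $S_m$ is a geometrically ruled surface over $\P^1_{\Bbbk}$ admitting a section $M_m \simeq \P^1_{\Bbbk}$, hence isomorphic to $\P(\OO_{\P^1}\oplus\OO_{\P^1}(-m))$ over $\Bbbk$.

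The main obstacle I anticipate is the parity argument in part \textit{(1)}: showing that the odd self-intersection $M_m^2 = -m$ actually forces an odd-degree $\Bbbk$-rational $0$-cycle on $M_m$, rather than merely an even-degree one. The clean way to handle this is to intersect $M_m$ with a $\Bbbk$-rational divisor whose restriction to $M_m$ has odd degree; a natural candidate is $-K_{S_m}$ restricted to $M_m$, or a suitable combination of the fiber class $F$ and $M_m$ itself, since $K_{S_m} \cdot M_m$ and $M_m^2$ carry the relevant parity. I would compute these restriction degrees explicitly via adjunction on $M_m$ and the intersection numbers $M_m^2 = -m$, $M_m \cdot F = 1$, and then invoke Lemma \ref{lem:odd points}. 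The reconstruction step (recovering the projective-bundle structure over $\Bbbk$ from a section) is standard but should be stated carefully to ensure everything is defined over $\Bbbk$ rather than merely over $\overline{\Bbbk}$.
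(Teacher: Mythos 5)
Your proposal is correct, but it follows a genuinely different route from the paper. The paper proves (1) by induction on $m$: the base case $m=1$ contracts the unique $(-1)$-curve to a $\Bbbk$-form of $\P^2$ with a $\Bbbk$-rational point (Theorem \ref{chatelet}); for $m>1$ it uses Lemma \ref{lem:two points} to pick a Galois-stable pair of points on two fibers away from the negative section, performs the corresponding pair of elementary transformations over $\Bbbk$ to reach a $\Bbbk$-form of $\F_{m-2}$, and invokes the induction hypothesis. For (2) it notes that a $\Bbbk$-point on $M_m$ gives a $\Bbbk$-rational fiber, hence an elementary transformation over $\Bbbk$ to a $\Bbbk$-form of $\F_{m-1}$ with $m-1$ odd, and concludes by (1). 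You instead argue directly: $M_m$ descends to $\Bbbk$ by uniqueness, the class $\OO_{S_m}(M_m)|_{M_m}$ (or $-K_{S_m}|_{M_m}$, of degree $2-m$) is a $\Bbbk$-rational divisor class of odd degree on the conic $M_m$ when $m$ is odd, forcing $M_m\simeq\P^1_\Bbbk$; then $B\simeq\P^1_\Bbbk$ by Theorem \ref{chatelet}, and $S_m\to B$ is a conic bundle with a section, hence $\P(\mathcal{E})$ for a rank-$2$ bundle, hence $\F_m$ by Grothendieck's splitting theorem over $\P^1_\Bbbk$. Your argument is shorter and avoids the induction, at the cost of invoking the projective-bundle/splitting machinery; the paper stays entirely within the blow-up--contraction toolkit it has already set up. One small wrinkle to fix in your write-up: Lemma \ref{lem:odd points} as stated takes an \emph{effective} odd-degree $0$-cycle, whereas your candidate classes have negative degree; you should either twist by a suitable multiple of $-K_{M_m}$ to reach degree $1$ (which is then effective by Riemann--Roch) or appeal to the proof of Lemma \ref{lem:odd points}, which only uses the divisor class. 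This is a cosmetic adjustment, not a gap.
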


\begin{proof}
For (1), we prove this assertion by induction on $m$. First let us assume $m=1$. Note that $S_1$ contains a unique $(-1)$-curve $E$ defined over $\Bbbk$. Hence, we have the contraction $\psi_1\colon S_1 \to S_1'$ of $E$. Then $S_1'$ is a $\Bbbk$-form of $\P^2$ with a $\Bbbk$-rational point $\psi_1(E)$. Thus, $S_1' \simeq \P^2_{\Bbbk}$ by Theorem \ref{chatelet}, and hence, $S_1 \simeq \P_{\Bbbk}(\OO_{\P^1}\oplus\OO_{\P^1}(-1))$. 

From now on, we assume $m>1$. By Lemma \ref{lem:two points}, the minimal section $Q$ contains two points $\msp_1$ and $\msp_2$ over $\bar{\Bbbk}$ such that the union $\msp_1+\msp_2$ is defined over $\Bbbk$. Let $F_1$ and $F_2$ be distinct fibers of the ruling on $S_{\bar{\Bbbk}} \simeq \F_m$ passing through $\msp_1$ and $\msp_2$, respectively. The union $F_1+F_2$ is defined over $\Bbbk$. There exist two points $\msp_1'$ and $\msp_2'$ on $F_1 \setminus \{\msp_1\}$ and $F_2 \setminus \{\msp_2\}$ over $\bar{\Bbbk}$, respectively, such that the union $\msp_1'+\msp_2'$ is defined over $\Bbbk$. Let $\varphi_m\colon S_m' \to S_m$ be the blow-up at $\msp_1+\msp_2$ over $\Bbbk$. Then we obtain the contraction $\psi_m\colon S_m' \to S_{m-2}$ of the strict transform of $F_1+F_2$, defined over $\Bbbk$ so that $S_{m-2}$ is a $\Bbbk$-form of $\F_{m-2}$. By the induction hypothesis, $S_{m-2}\simeq \P_{\Bbbk}(\OO_{\P^1}\oplus\OO_{\P^1}(-(m-2)))$. Therefore, we see that $S_m \simeq \P_{\Bbbk}(\OO_{\P^1}\oplus\OO_{\P^1}(-m))$. 

\smallskip

To prove (2), assume that $Q$ has a $\Bbbk$-rational point \(\msp\). Then the fiber $F$ of the ruling on $(S_m)_{\bar{\Bbbk}}$ passing through $\msp$ is defined over $\Bbbk$. Hence, we obtain the elementary transformation $\phi\colon S_m \dashrightarrow S_{m-1}$ over $\Bbbk$, where $S_{m-1}$ is a $\Bbbk$-form of $\F_{m-1}$. Since $m-1$ is odd, we know that $S_m \simeq \P_{\Bbbk}(\OO_{\P^1}\oplus\OO_{\P^1}(-m))$ by (1). 

Conversely, assume that $S_m \simeq \P_{\Bbbk}(\OO_{\P^1}\oplus\OO_{\P^1}(-m))$. Then $Q$ clearly has a $\Bbbk$-rational point since $Q \simeq \P^1_{\Bbbk}$. 
\end{proof}

\subsection{Cylinders in smooth del Pezzo surfaces}
In this subsection, we recall some results on cylinders in smooth del Pezzo surfaces defined over $\Bbbk$. 

The following theorem gives a strong criterion for rationality and cylindricity.

\begin{theorem}[{cf. \cite[Theorem 1]{DK18}, \cite[Theorem 1.7]{Saw23a}, \cite[Theorem 1.3]{Saw23b}}]\label{thm:Saw cyl iff}
Let $Z$ be a smooth geometrically rational projective surface defined over $\Bbbk$. 
If $Z$ is $\Bbbk$-minimal and $(-K_Z)^2 \le 4$, then $Z$ is neither rational nor cylindrical. 
\end{theorem}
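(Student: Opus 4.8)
The plan is to combine the classification of $\Bbbk$-minimal geometrically rational surfaces due to Iskovskikh and Manin with the known rationality and cylindricity criteria, handling the two assertions in parallel. Since $Z$ is smooth, geometrically rational and $\Bbbk$-minimal, this classification shows that $Z$ is of exactly one of two types: either \emph{(a)} a del Pezzo surface with $\Pic(Z_{\overline{\Bbbk}})^{\gal(\overline{\Bbbk}/\Bbbk)} \cong \Z$, hence of Picard rank one over $\Bbbk$, or \emph{(b)} a conic bundle $f \colon Z \to B$ over a geometrically rational curve $B$ with $\Pic(Z_{\overline{\Bbbk}})^{\gal(\overline{\Bbbk}/\Bbbk)} \cong \Z^{2}$. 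In case \emph{(b)} the number of singular geometric fibres equals $8 - (-K_Z)^{2}$, so the hypothesis $(-K_Z)^{2} \le 4$ forces at least four degenerate fibres. This dichotomy reduces the theorem to verifying both assertions in these two cases.

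For the non-rationality I would appeal to the classical theory. In case \emph{(a)}, a $\Bbbk$-minimal del Pezzo surface of degree $(-K_Z)^{2} \in \{1,2,3,4\}$ is not $\Bbbk$-rational by the Iskovskikh--Manin theorem; the obstruction can be exhibited through the nontriviality of the birational invariant $H^{1}(\gal(\overline{\Bbbk}/\Bbbk), \Pic(Z_{\overline{\Bbbk}}))$, or uniformly through Iskovskikh's criterion that a $\Bbbk$-minimal geometrically rational surface can be $\Bbbk$-rational only if $(-K_Z)^{2} \ge 5$. In case \emph{(b)}, a $\Bbbk$-minimal conic bundle with at least four singular fibres is non-rational by the rationality criterion for conic bundles. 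Thus $\Bbbk$-minimality together with $(-K_Z)^{2} \le 4$ rules out rationality in both cases.

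The substantive part is the non-cylindricity, which I would prove by contradiction. Suppose $Z$ contains a cylinder $U \cong \A^{1}_{\Bbbk} \times W$ with $W$ an affine curve over $\Bbbk$. Comparing function fields gives $\Bbbk(Z) = \Bbbk(W)(t)$, so the smooth projective model $\overline{W}$ of $W$ is a geometrically rational curve, that is, a conic, and the first projection $U \to W$ extends to a $\Bbbk$-rational fibration $Z \dashrightarrow \overline{W}$ with generic fibre $\P^{1}$ and with a rational section supplied by a constant slice of the $\A^{1}$-factor. Resolving the indeterminacy over $\Bbbk$ yields a smooth projective conic bundle $\tilde{Z} \to \overline{W}$ carrying a section. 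Analyzing over $\overline{\Bbbk}$ the $\gal(\overline{\Bbbk}/\Bbbk)$-stable configuration formed by the boundary $Z \setminus U$ together with the fibres of $\tilde{Z} \to \overline{W}$, I would extract a Galois-stable set of $(-1)$-curves that can be contracted over $\Bbbk$; pushing the resulting contraction down to $Z$ contradicts $\Bbbk$-minimality, since in case \emph{(a)} it would produce a nontrivial $\Bbbk$-birational contraction of the Picard-rank-one surface $Z$, and in case \emph{(b)} it would blow down a Galois-stable pair of components of the singular fibres of an already minimal conic bundle. As a cross-check, one may instead invoke the fact that cylindricity is a $\Bbbk$-birational invariant \cite{Saw23b} together with the classification of cylindrical minimal models in \cite{DK18} and \cite{Saw23a}, which shows that a $\Bbbk$-minimal geometrically rational surface is cylindrical only when $(-K_Z)^{2} \ge 5$.

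I expect the genuine obstacle to lie entirely in the non-cylindricity step, and more precisely in controlling the Galois action on the degenerate fibres of the induced conic bundle, since the cylinder must be excluded for every $\Bbbk$-form simultaneously. The borderline case $(-K_Z)^{2} = 4$ is the most delicate: here $Z_{\overline{\Bbbk}}$ is itself cylindrical over $\overline{\Bbbk}$, so the failure of cylindricity over $\Bbbk$ is a purely arithmetic phenomenon dictated by $\Bbbk$-minimality rather than by the geometry of $Z_{\overline{\Bbbk}}$. The rationality assertion, by contrast, amounts essentially to a citation of the Iskovskikh--Manin classification.
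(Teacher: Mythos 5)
The paper does not prove this statement at all: it is imported as a black box from the cited references (Dubouloz--Kishimoto for the Picard-rank-one del Pezzo case of low degree, Sawahara for the degree-$4$ case, the conic-bundle case, and the birational invariance of cylindricity), so the only comparison possible is with those sources. Your reconstruction matches their architecture: the Iskovskikh--Manin/Mori dichotomy into $\rho_{\Bbbk}=1$ del Pezzo surfaces and relatively minimal conic bundles with $8-(-K_Z)^2\ge 4$ degenerate geometric fibres is the correct starting point, and the non-rationality half really is just Iskovskikh's criterion that a $\Bbbk$-minimal geometrically rational surface is $\Bbbk$-rational only if $(-K_Z)^2\ge 5$ (and has a rational point), so that part is fine as stated.

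The one soft spot is the non-cylindricity step. The sentence ``I would extract a Galois-stable set of $(-1)$-curves that can be contracted over $\Bbbk$'' is a placeholder for the entire content of the theorem, and as written it is not even the right mechanism in case \emph{(a)}: a $\rho_{\Bbbk}=1$ del Pezzo surface of degree $4$ is geometrically cylindrical, and the actual obstruction in the references is not produced by exhibiting a contractible curve on $Z$ itself. There the argument passes to the $\P^1$-fibration induced by the cylinder on a resolution of the pencil, analyses its degenerate fibres and the (one- or two-)section contained in the boundary, and derives a contradiction with minimality only after a careful case analysis (for $\rho_{\Bbbk}=1$ one also uses that every cylinder is automatically anticanonically polar); the borderline degree-$4$ and conic-bundle cases are genuinely delicate and occupy most of Saw23a/Saw23b. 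So your sketch identifies the right objects but does not yet constitute a proof of the hard half. That said, your stated fallback --- birational invariance of cylindricity from Saw23b combined with the classification of cylindrical $\Bbbk$-minimal surfaces in DK18 and Saw23a --- is precisely what the paper does, and is the appropriate way to justify the statement here.
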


For smooth projective surfaces, cylindricity is a birational invariant.

\begin{theorem}[{cf. \cite[Corollary 1.8]{Saw23b}}]\label{thm:Saw cyl bir}
Let $Y$ and $Z$ be two smooth projective surfaces defined over $\Bbbk$. Assume that $Y$ and $Z$ are birationally equivalent. Then $Y$ is cylindrical if and only if $Z$ is cylindrical. 
\end{theorem}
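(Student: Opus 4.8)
The plan is to reduce to the case of a single blow-up and then analyze how a cylinder can interact with the exceptional curve, keeping track of Galois-stability throughout. Since $Y$ and $Z$ are smooth projective surfaces over $\Bbbk$ (a field of characteristic zero, in particular perfect) that are birationally equivalent, I would first resolve the birational map $Y \dashrightarrow Z$ by a common smooth projective surface $W$ equipped with birational morphisms $p \colon W \to Y$ and $q \colon W \to Z$, each factoring as a finite composition of blow-ups at closed points defined over $\Bbbk$ (that is, $\gal(\overline{\Bbbk}/\Bbbk)$-orbits of geometric points). By this classical factorization it suffices to establish, for a single blow-up $\sigma \colon W \to V$ with exceptional locus $E = \sigma^{-1}(\msp)$, the two implications: \textbf{(a)} if $V$ is cylindrical then $W$ is cylindrical, and \textbf{(b)} if $W$ is cylindrical then $V$ is cylindrical. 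Applying (a) along $p$ and (b) along $q$ then yields $Y$ cylindrical $\Rightarrow W$ cylindrical $\Rightarrow Z$ cylindrical, and the reverse implication follows symmetrically.

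For direction \textbf{(a)}, which is the easy one, let $U \simeq \A^1 \times Z_0$ be a cylinder in $V$, with $Z_0$ a smooth affine curve over $\Bbbk$ and projection $\pi \colon U \to Z_0$. If the center $\msp$ does not lie on $U$, then $\sigma$ is an isomorphism over $U$ and $\sigma^{-1}(U)$ is a cylinder in $W$. If $\msp \in U$, set $z_0 = \pi(\msp)$, a closed point of $Z_0$ which is automatically Galois-stable, and replace $Z_0$ by the smaller affine curve $Z_0 \setminus \{z_0\}$; the resulting cylinder avoids $\msp$, and we conclude as in the previous case.

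Direction \textbf{(b)} is the heart of the matter. Here $U \simeq \A^1 \times Z_0 \subseteq W$ is a cylinder; since $Z_0$ is affine, $U$ is an affine surface and therefore contains no complete curve, so the exceptional curve $E$ is not contained in $U$ and $E \cap U$ is an affine open subset of $E$. I would again use the $\A^1$-fibration $\pi \colon U \to Z_0$. If $E \cap U$ meets only finitely many fibers of $\pi$ — equivalently, $E$ does not dominate $Z_0$ — then after removing those finitely many (Galois-stable) points from $Z_0$ the cylinder becomes disjoint from $E$, so $\sigma$ restricts to an isomorphism onto an open subset of $V$ and $V$ is cylindrical. \emph{The main obstacle is the remaining case, in which $E \cap U$ dominates $Z_0$, so that $E$ is a section or multisection of the $\A^1$-fibration and cannot simply be avoided by shrinking the base.}

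To resolve this dominant case I would complete the $\A^1$-fibration to a $\P^1$-fibration on a smooth projective model and pass to the reduced SNC boundary $D = W \setminus U$, translating the cylinder condition into a combinatorial condition on the weighted dual graph of $D$ together with the presence of a fiber component of $\A^1$-type. Contracting the $(-1)$-curve $E$ alters this graph in a controlled way, and using that $E$ is a $(-1)$-curve meeting the general fiber, one checks that the induced fibration on $V$ still possesses a fiber of $\A^1$-type over a suitable affine open of $B = \overline{Z_0}$, where the multisection $E$ has been absorbed into the boundary; re-trivializing over that open set produces a cylinder on $V$. The technical crux — and the content borrowed from \cite{Saw23b} — is verifying that the contraction does not destroy a full $\A^1$ inside the generic fiber of the induced fibration, which is precisely where the self-intersection $E^2 = -1$ and the structure of the dual graph of $D$ enter. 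Since every blow-up center is a closed point over $\Bbbk$ and every contracted configuration is $\gal(\overline{\Bbbk}/\Bbbk)$-invariant, all of the above constructions descend to $\Bbbk$, giving the equivalence over the base field.
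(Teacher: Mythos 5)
The paper offers no proof of this statement: it is imported wholesale as \cite[Corollary 1.8]{Saw23b}, so there is no internal argument to measure yours against. Judged on its own terms, your reduction to a single blow-up $\sigma\colon W\to V$ via strong factorization over the perfect field $\Bbbk$ is sound, and direction (a) together with the non-dominant subcase of (b) are handled correctly. However, the entire content of the theorem sits in exactly the subcase you flag as ``the main obstacle'': a cylinder $U\simeq \A^1\times Z_0\subseteq W$ for which the exceptional curve $E$ is horizontal over $Z_0$. There your text is not a proof but a deferral --- you write that the technical crux is ``the content borrowed from \cite{Saw23b}'', which is circular, since that is precisely the result being established. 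Note that the theorem is essentially equivalent to the statement ``a surface birational to a cylindrical surface is cylindrical'' (any cylindrical surface is birational to $\P^1\times \overline{Z_0}$, which is itself cylindrical), so no amount of reshuffling the easy cases can avoid this step.

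Beyond being incomplete, the sketch for the hard case is structurally off. If $E$ is a (multi)section of the $\P^1$-fibration $\Phi$ completing $\pi$, then $\Phi$ does not factor through the contraction $\sigma$, so ``the induced fibration on $V$'' does not exist as a morphism; one must manufacture an entirely different $\A^1$-fibration on an open subset of $V$. Already in the model case $W=\F_1\to V=\P^2$ with $U$ a cylinder of the ruling and $E$ the exceptional section, the cylinder one eventually finds on $\P^2$ comes from the pencil of lines through $\sigma(E)$, not from any descent of the ruling --- and over a non-closed field the existence of a suitable such pencil with an $\A^1$ generic fiber defined over $\Bbbk$ is exactly the delicate point (compare Theorem \ref{thm:Saw cyl iff}: $\Bbbk$-minimal surfaces of low degree admit no cylinder at all, so the conclusion genuinely depends on global invariants of $V$, not just on the local picture near $E$). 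In short: the skeleton is right, the Galois bookkeeping is fine, but the load-bearing step is absent.
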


From now on, we present several examples of cylinders that will help the reader understand the proof of our main result. These configurations will appear repeatedly in Section \ref{sect:main1} when we construct cylinders on various $\Bbbk$-forms.

\begin{example}\label{ex:(0)}
Let $S$ be a $\Bbbk$-form of the Hirzebruch surface $\F_m$ of degree $m>0$. Then the minimal section $M$ of $S_{\bar{\Bbbk}}$ is defined over $\Bbbk$. Let $F_1,\dots,F_r$ be distinct fibers of the ruling on $S_{\bar{\Bbbk}} \simeq \F_m$ such that the union $F_1+\dots+F_r$ is defined over $\Bbbk$. If $\Bbbk$ is algebraically closed, then $U \coloneqq S \setminus \Supp (M+F_1+\dots+F_r)$ is a cylinder. However, $U$ is not always a cylinder if \(\Bbbk\) is not algebraically closed. Nevertheless, $U$ contains a cylinder. 
Let $\Phi\colon S \to C$ be a conic bundle. Note that $\Phi_{\bar{\Bbbk}}\colon \F_m \to \P^1_{\bar{\Bbbk}}$. 
We obtain that the restriction $\varphi \coloneqq  \Phi|_{U}$ gives a morphism over an affine curve $B \subseteq C$. 
By construction, the base extension $\varphi_{\bar{\Bbbk}}$ is an $\A^1$-bundle, and hence, so is $\varphi$ by {\cite[Theorem 1]{KM78}}, which implies that there exists an open subset $Z \subseteq B$ such that $\varphi^{-1}(Z) \simeq \A^1_{\Bbbk} \times Z$. 
\end{example}

\begin{example}\label{ex:(1)}
On $\P^2_{\Bbbk}$, let $\msp$ be a $\Bbbk$-rational point, and $L_1,\dots,L_r$ lines passing through $\msp$, where $r$ is a positive integer. 
Then $\P^2_{\bar{\Bbbk}} \setminus \Supp(L_1+\dots+L_r)$ is a cylinder over $\bar{\Bbbk}$ since it is isomorphic to $\A^1_{\bar{\Bbbk}} \times (\A^1_{\bar{\Bbbk}}\setminus{\text{$(r-1)$ points}})$ (see also {\cite[Example 4.1.1]{CPW17}}). 
Assume further that the union $L_1+\dots +L_r$ is defined over $\Bbbk$. 
Although $U \coloneqq  \P^2_{\Bbbk} \setminus \Supp(L_1+\dots+L_r)$ is defined over $\Bbbk$, it is not always a cylinder. 
On the other hand, there exists a cylinder of $\P^2_{\Bbbk}$ over $\Bbbk$ contained in $U$. 
Let $\varphi\colon \F_1 \to \P^2_{\Bbbk}$ be the blow-up at $\msp$. Then the boundary of $\varphi^{-1}(U) \simeq U$ is the same as that in Example \ref{ex:(0)}. 
\end{example}

\begin{example}\label{ex:(2)}
On $\P^2_{\Bbbk}$, let $\msp$ be a $\Bbbk$-rational point, and let $Q_1,\dots,Q_r$ be irreducible conics on $\P^2_{\bar{\Bbbk}}$ passing through $\msp$ such that $I_{\msp}(Q_i,Q_j) = 4$ for every $i,j=1,\dots ,r$ $(i \not= j)$, where $r$ is a positive integer. 
Note that conics $Q_1,\dots ,Q_r$ have the common tangent line $T$ at $\msp$. 
Assume that the union $Q_1+\dots+Q_r$ is defined over $\Bbbk$ so that $T$ is also defined over $\Bbbk$. 
Then there exists a cylinder of $\P^2_{\Bbbk}$ defined over $\Bbbk$ and contained in $U \coloneqq  \P^2_{\Bbbk} \setminus \Supp(Q_1+\dots+Q_r+T)$. 
Indeed, $U$ can be obtained from $\P^2_{\Bbbk}$ by blowing up $\msp$ and its infinitely near points, and then performing a sequence of contractions starting with the strict transform of $T$, which contains a cylinder. 
\end{example}

\begin{example}\label{ex:(3)}
Let $C$ be a cubic curve defined over $\Bbbk$ with a nodal singular point \(\msp\) on $\P^2_{\Bbbk}$, $\varphi\colon \F_1 \to \P^2_{\Bbbk}$ the blow-up at $\msp$, $E'$ the exceptional curve of $\varphi$, and $C'$ the strict transform of $C$ under $\varphi$. 
Then $C'$ and $E'$ intersect at exactly two points $\msp_1'$ and $\msp_2'$. 
Let $F_1'$ and $F_2'$ be distinct fibers of the ruling on $\F_1$.  
Note that the union $F_1'+F_2'$ is defined over $\Bbbk$ since the union $\msp_1'+\msp_2'$ is defined over $\Bbbk$. 
Then there exists a cylinder of $\F_1$ over $\Bbbk$ contained in $\F_1 \setminus \Supp(C'+F_1'+F_2')$. 
\end{example}

\begin{example}\label{ex:(4)}
Let $S$ be a $\Bbbk$-form of $\P^1 \times \P^1$ with a $\Bbbk$-rational point $\msp$. 
There exist unique irreducible curves $L_1$ and $L_2$ of types $(1,0)$ and $(0,1)$ on $S_{\bar{\Bbbk}} \simeq \P^1_{\bar{\Bbbk}} \times \P^1_{\bar{\Bbbk}}$ passing through $\msp$. 
Note that the union $L_1+L_2$ is defined over $\Bbbk$. 
Let $C_1,\dots,C_r$ be irreducible curves of type $(1,1)$ on $S_{\bar{\Bbbk}} \simeq \P^1_{\bar{\Bbbk}} \times \P^1_{\bar{\Bbbk}}$ passing through $\msp$ such that $I_{\msp}(C_i,C_j) = 2$ for $i,j=1,\dots,r$ $(i\not= j)$, where $r$ is a positive integer. 
Assume that the union $C_1+\dots+C_r$ is defined over $\Bbbk$. 
Then there exists a cylinder of $S$ over $\Bbbk$ contained in $U \coloneqq  S \setminus (C_1+\dots+C_r+L_1+L_2)$. 
Indeed, $U$ can be obtained from $\P^2_{\Bbbk}$ by blowing up at $\msp$ and contracting the strict transform of $L_1+L_2$, which contains a cylinder. 
\end{example}

\section{Proof of main theorem}\label{sect:main1}

In this section, we give a proof of Theorem \ref{thm:combined}.

The cylindricity proofs in this section follow a uniform pattern. 
Starting with the minimal resolution \(Y\), we contract a suitable \(\gal(\bar{\Bbbk}/\Bbbk)\)-invariant configuration of \((-1)\)-curves to obtain a simpler \(\Bbbk\)-form \(W\) of one of the standard surfaces
\(\mathbb F_m\), \(\mathbb P^2\), \(\mathbb P^1\times \mathbb P^1\), or \(\mathbb F_1\). We then choose a boundary on \(W\) fitting one of Examples \ref{ex:(0)}--\ref{ex:(4)} and recover a cylinder on \(S\) from the corresponding open subset of \(W\).

\subsection{Case: $n \le m+3$}
In this subsection, we shall show the following theorem: 

\begin{theorem}\label{thm:intermediate}
Let $S$ be a \(\Bbbk\)-form of $S_m^n$ for $m \ge 2$ and $1 \le n \le m+3$, $\pi\colon Y \to S$ the minimal resolution, and $Q$ the unique $(-m)$-curve on $Y_{\bar{\Bbbk}}$. 
Then the following holds.
\begin{enumerate}
\item If $1\le n \le m+1$, then $S$ is cylindrical. Moreover, $S$ is rational if and only if $Q$ has a \(\Bbbk\)-rational point. 
\item If $n = m+2$ and $Q$ has a \(\Bbbk\)-rational point, then $S$ is both cylindrical and rational. 
\item If $n = m+3$, then $S$ is rational. Moreover, if $Q$ has a \(\Bbbk\)-rational point, then $S$ is cylindrical. 
\end{enumerate}
\end{theorem}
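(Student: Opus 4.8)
The plan is to run everything off the $\P^1$-fibration $f\colon \tilde S_{\overline\Bbbk}\to \P^1$ induced by the ruling of $\F_m$ (the minimal resolution of $\P(1,1,m)$), for which $Q$ is a section. Writing $\operatorname{Pic}(\tilde S_{\overline\Bbbk})=\langle Q,F,E_1,\dots,E_n\rangle$ with $Q^2=-m$, $F^2=0$, $Q\cdot F=1$, $E_i\cdot E_j=-\delta_{ij}$, $E_i\cdot Q=E_i\cdot F=0$, one has $-K_{\tilde S}=2Q+(m+2)F-\sum E_i$ and $-K_{\tilde S}\cdot Q=2-m$. The base of $f$ is a $\Bbbk$-form $B$ of $\P^1$ and $f|_Q\colon Q\to B$ is an isomorphism over $\Bbbk$, so $Q(\Bbbk)\neq\emptyset\iff B(\Bbbk)\neq\emptyset$. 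First I would record the decisive enumeration of the $(-1)$-curves $D$ with $D\cdot Q=1$: for $n\le m+3$ these are exactly the $n$ pairwise disjoint curves $F-E_i$, together with, only when $n=m+3$, the single section $Q+(m+1)F-\sum_{i=1}^{m+3}E_i$. For $n\le m+2$ the curves $F-E_i$ are thus the entire $\gal(\overline\Bbbk/\Bbbk)$-stable set, so $\sum_i(F-E_i)=nF-\sum E_i$ is Galois-invariant; combined with $-K_{\tilde S}-2Q=(m+2)F-\sum E_i$ this shows $(m+2-n)F$ is Galois-invariant, whence $F$ itself is Galois-invariant \emph{precisely} when $n\le m+1$. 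This threshold stratifies the proof.

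\textbf{Case $1\le n\le m+1$.} Here $f$ is defined over $\Bbbk$. For cylindricity (unconditional) I would mimic Example \ref{ex:(0)}: the boundary $Q+\sum_i(E_i+F_i)$, consisting of a section plus the finitely many reducible fibers, is Galois-stable hence $\Bbbk$-rational, and its complement $U$ is an $\A^1$-bundle over the affine curve $B\setminus f(\{p_i\})$; by \cite{KM78} $U$ contains a $\Bbbk$-cylinder, and since $U$ is disjoint from $Q$ it descends to a cylinder on $S$. For rationality, if $Q(\Bbbk)\neq\emptyset$ then the $E_i$ (the $(-1)$-curves with $D\cdot Q=D\cdot F=0$) form a Galois-stable set; contracting them over $\Bbbk$ yields a $\Bbbk$-form of $\F_m$ whose negative section $Q$ carries a $\Bbbk$-point, so by Lemma \ref{lem:Fm} this form is $\F_m$ over $\Bbbk$ and hence rational. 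Conversely, if $S$ is rational then $\tilde S(\Bbbk)\neq\emptyset$, and the image of a $\Bbbk$-point under $f$ lies in $B(\Bbbk)$, forcing $Q(\Bbbk)=B(\Bbbk)\neq\emptyset$.

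\textbf{Cases $n=m+2$ and $n=m+3$.} The curves meeting $Q$ once are pairwise disjoint (one checks $(F-E_i)\cdot(F-E_j)=-\delta_{ij}$, and for $n=m+3$ the extra section meets none of the $F-E_i$), so their union is a Galois-stable configuration of disjoint $(-1)$-curves contractible over $\Bbbk$, each raising $Q^2$ by one. For $n=m+3$ there are $m+4$ such curves, so the contraction $\tilde S\to\tilde S'$ reaches a $\Bbbk$-form of $\P^2$ of Picard rank one in which the image of $Q$ is a conic defined over $\Bbbk$; Lemma \ref{lem:line conic} then gives $\tilde S'\simeq\P^2_\Bbbk$, proving rationality \emph{unconditionally}. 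For $n=m+2$ there are only $m+2$ such curves, and $\tilde S'$ is a Picard rank-two $\Bbbk$-form of a Hirzebruch surface (or of $\P^1\times\P^1$) in which $Q'$ is a section with $(Q')^2=2$; if $Q(\Bbbk)\neq\emptyset$ then $Q'$, and hence the base of its ruling, carries a $\Bbbk$-point, so $\tilde S'$ is rational by Lemma \ref{lem:Fm} or Lemma \ref{lem:p1p1}. In both cases, when $Q(\Bbbk)\neq\emptyset$, I would transport a cylinder back from the minimal model $\tilde S'$ to $S$, building it downstairs via Examples \ref{ex:(1)}--\ref{ex:(2)} (on $\P^2_\Bbbk$, using the conic $Q'$ and a $\Bbbk$-point of $Q'$) or Examples \ref{ex:(0)}/\ref{ex:(4)} (in the Hirzebruch/quadric case), arranging its boundary to contain the images of $Q$ and of the contracted curves so that it pulls back to a cylinder on $\tilde S$ avoiding $Q$ and descends to $S$.

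The main obstacle is the control of the Galois action once $n\ge m+2$: precisely at $n=m+2$ the class $F$ ceases to be Galois-invariant, so the ruling $f$ need no longer be defined over $\Bbbk$ and one cannot argue directly on $\tilde S$. The remedy — contracting the still Galois-stable set of $(-1)$-curves meeting $Q$ — is exactly what forces the asymmetry in the statement: this contraction lands on a $\Bbbk$-form of $\P^2$ for $n=m+3$ (rational for free by Lemma \ref{lem:line conic}), but only on a $\Bbbk$-form of $\F_k$ or $\P^1\times\P^1$ for $n=m+2$, which becomes rational only once a $\Bbbk$-point is supplied on $Q$. The steps requiring the most care are verifying that these contractions are genuinely defined over $\Bbbk$ (Galois-stability and disjointness of the configurations, and the $(-1)$-curve enumeration certifying that no further curves meet $Q$), and checking that the cylinders built on the minimal models lift to cylinders on $S$ disjoint from $Q$.
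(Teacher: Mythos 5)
Your proposal follows essentially the same route as the paper: the enumeration of $(-1)$-curves (the paper's Lemma~\ref{lem:middle curves}), contraction of the $E_i$ to a $\Bbbk$-form of $\F_m$ for $n\le m+1$, of the $F-E_i$ to a $\Bbbk$-form of $\P^1\times\P^1$ for $n=m+2$, and of all $m+4$ curves meeting $Q$ to a $\Bbbk$-form of $\P^2$ for $n=m+3$, with the cylinders built exactly as in Examples~\ref{ex:(0)}, \ref{ex:(4)} and~\ref{ex:(2)}. The one point to tighten is the case $n=m+2$: you leave the target of the contraction as ``a Hirzebruch surface or $\P^1\times\P^1$'' and then invoke Lemma~\ref{lem:Fm} or Lemma~\ref{lem:p1p1}, but Lemma~\ref{lem:Fm} needs a $\Bbbk$-point on the \emph{negative} section, so you must actually identify the target as a $\Bbbk$-form of $\P^1\times\P^1$ (as the paper asserts; e.g.\ the section $Q+mF-(E_{i_1}+\cdots+E_{i_{m+1}})$ from Lemma~\ref{lem:middle curves}(3) maps to a section of self-intersection $0$, which forces $\F_0$), after which Lemma~\ref{lem:p1p1} applies.
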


If $m$ is odd, then $Q$ always has a \(\Bbbk\)-rational point by Lemma \ref{lem:Fm}. Hence, we obtain the following corollary.

\begin{corollary}\label{cor:interm}
Let $S$ be a \(\Bbbk\)-form of $S_{2u-1}^{n}$ for $u \ge 2$ and $1 \le n \le 2u+2$. Then $S$ is both cylindrical and rational. 
\end{corollary}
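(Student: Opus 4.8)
The plan is to deduce the corollary directly from Theorem \ref{thm:intermediate} by checking that, when $m = 2u-1$ is odd, the only hypothesis entering its three cases---namely that $Q$ carries a $\Bbbk$-rational point---is automatically satisfied. Once this is in place, each of the three ranges of $n$ treated by Theorem \ref{thm:intermediate} collapses to the single conclusion ``both cylindrical and rational'', and since the hypotheses $1 \le n \le m+1$, $n = m+2$, and $n = m+3$ jointly exhaust the range $1 \le n \le 2u+2$, the corollary follows.

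First I would record why $Q(\Bbbk) \ne \emptyset$ whenever $m$ is odd. The curve $Q$ is the unique $(-m)$-curve on $\tilde{S}_{\overline{\Bbbk}}$, so it is $\gal(\overline{\Bbbk}/\Bbbk)$-invariant and descends to a curve over $\Bbbk$; it is thus a $\Bbbk$-form of $\P^1$. Contracting the Galois-stable configuration of mutually disjoint $(-1)$-curves $E_1,\dots,E_n$ (equivalently, passing to the minimal resolution of $\P(1,1,m)$, which is $\F_m$) is defined over $\Bbbk$ and produces a $\Bbbk$-form of $\F_m$ whose negative section is the image of $Q$. Since $m$ is odd, Lemma \ref{lem:Fm}(1) identifies this form with $\F_m$ over $\Bbbk$, so its negative section, and hence $Q$, is isomorphic to $\P^1_{\Bbbk}$ and in particular has a $\Bbbk$-rational point. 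Equivalently, $\OO_{\tilde{S}}(Q)|_{Q}$ is a $\Bbbk$-rational divisor of odd degree $-m$ on the $\Bbbk$-form $Q$ of $\P^1$, whence $Q \simeq \P^1_{\Bbbk}$ by Lemma \ref{lem:odd points}. This is exactly the remark preceding the corollary.

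With $Q(\Bbbk) \ne \emptyset$ secured, I would read off Theorem \ref{thm:intermediate} case by case. For $1 \le n \le m+1 = 2u$, part (1) gives cylindricity unconditionally and rationality under $Q(\Bbbk)\neq\emptyset$, which now holds. For $n = m+2 = 2u+1$, part (2) yields both properties under the same hypothesis. For $n = m+3 = 2u+2$, part (3) gives rationality unconditionally and cylindricity under $Q(\Bbbk)\neq\emptyset$. Since $\{1,\dots,2u+2\}$ is the disjoint union of these three ranges, every $\Bbbk$-form of $S_{2u-1}^{n}$ with $1 \le n \le 2u+2$ is simultaneously cylindrical and rational.

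The corollary carries no genuine obstacle of its own: all of its substance is inherited from Theorem \ref{thm:intermediate}, and the only fresh ingredient is the parity argument for $Q(\Bbbk)$ above. If one wishes to locate where the real difficulty lies, it is inside the proof of Theorem \ref{thm:intermediate}---specifically the explicit cylinder constructions in the boundary cases $n = m+1,\,m+2,\,m+3$, where the geometric complement $S \setminus \pi_{\ast}(Q + \sum_{i}(E_i + F_i))$ must be reorganized, via the birational invariance of cylindricity (Theorem \ref{thm:Saw cyl bir}) together with the model configurations of Examples \ref{ex:(0)}--\ref{ex:(4)}, into an honest $\A^1_{\Bbbk}$-cylinder over the non-closed field $\Bbbk$.
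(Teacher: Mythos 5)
Your proposal is correct and takes essentially the same route as the paper: the paper's entire justification is the remark preceding the corollary that for odd $m$ the curve $Q$ always has a $\Bbbk$-rational point by Lemma \ref{lem:Fm}, after which the three cases of Theorem \ref{thm:intermediate} cover $1 \le n \le 2u+2$ and each yields both conclusions. Your extra detail on why $Q(\Bbbk)\neq\emptyset$ (contracting the Galois-stable exceptional configuration to a $\Bbbk$-form of $\F_m$, or the odd-degree divisor argument) is exactly the implicit content of that remark.
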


Recall that the minimal resolution \(\tilde{S}_{m}^{n}\) of the surface $S_m^n$ is obtained by blowing up at \(n\) points in general position on the Hirzebruch surface $\F_m$ of degree $m$. Let $F$ be the general fiber of the $\P^1$-fibration $\tilde{S}_{m}^{n} \to \F_m \to \P^1$, $Q$ the unique $(-m)$-curve on \(\tilde{S}\), and $E_1,\dots ,E_n$ be the exceptional curves of the blow-up of $\F_m$. 

\begin{lemma}\label{lem:middle curves}
Let $E$ be a $(-1)$-curve on $\tilde{S}_{m}^{n}$, where $m \ge 2$ and $1 \le n \le m+3$. Then $E$ is linearly equivalent to one of the following.
\begin{enumerate}
\item $E \sim E_i$ for some $i=1,\dots ,n$. 
\item $E \sim F-E_i$ for some $i=1,\dots ,n$. 
\item $E \sim Q + mF - (E_{i_1} + \dots + E_{i_{m+1}})$ for some $1 \le i_1 < i_2 < \dots < i_{m+1} \le n$ if $n \ge m+1$. 
\item $E \sim Q + (m+1)F -(E_1 + \dots + E_{m+3})$ if $n=m+3$. 
\end{enumerate}
\end{lemma}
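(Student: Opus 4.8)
The plan is to work on the smooth surface $\tilde{S}_m^n$, whose Picard group over $\overline{\Bbbk}$ is freely generated by $Q$, $F$, and the exceptional curves $E_1, \dots, E_n$. First I would record the intersection pairings: $Q^2 = -m$, $F^2 = 0$, $Q \cdot F = 1$, $E_i^2 = -1$, $E_i \cdot E_j = 0$ for $i \neq j$, and $E_i \cdot Q = E_i \cdot F = 0$. I would also fix the canonical class. Since $\tilde{S}_m^n$ is obtained by blowing up $n$ points on $\F_m$, and on $\F_m$ one has $K_{\F_m} \sim -2Q - (m+2)F$ (writing $Q$ for the negative section and $F$ for the fiber, with $Q^2 = -m$), the canonical class on $\tilde{S}_m^n$ is $K \sim -2Q - (m+2)F + \sum_{i=1}^n E_i$.

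**Reducing to a Diophantine classification.**

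Any class can be written $E \sim aQ + bF - \sum_{i=1}^n c_i E_i$. A $(-1)$-curve must satisfy the two numerical conditions $E^2 = -1$ and $E \cdot K = -1$ (the adjunction/genus condition forcing an irreducible rational curve of self-intersection $-1$). Expanding these using the intersection table gives
\begin{align*}
E^2 &= -m a^2 + 2ab - \sum_i c_i^2 = -1, \\
E \cdot K &= m a - 2b - (m+2)a + \sum_i c_i = (-2)a - 2b + \sum_i c_i,
\end{align*}
so the genus condition reads $2a + 2b - \sum_i c_i = 1$ together with $E \cdot K = -1$. The core of the proof is the observation that $-1 = E^2 = E \cdot (E) = E \cdot (-K - (\text{something}))$, more usefully combining the two equations via $E^2 + E \cdot K = -2$ to get a bound on $a$; in fact $E \cdot Q = -ma + b \geq -1$ for an irreducible curve $E \neq Q$, and $E \cdot F = a \geq 0$ since $F$ is nef. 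These inequalities, together with the constraint $n \leq m+3$, pin down $a \in \{0, 1\}$ and then force the four listed families.

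**Executing the case split on $a = E \cdot F$.**

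I would organize the argument by the value of $a = E \cdot F \geq 0$. When $a = 0$: the curve is contained in a fiber, so $E$ is either a component $F - E_i$ of a reducible fiber or one of the $E_i$, giving cases (1) and (2). When $a = 1$: I would use $E \cdot Q = b - m \geq -1$, i.e.\ $b \geq m - 1$, combined with $E^2 = -1$ and the genus equation, to solve for $b$ and the $c_i$. Writing out $E^2 = -m + 2b - \sum c_i^2 = -1$ and the genus relation, a short calculation should force $\sum c_i^2 = \sum c_i$ (so each $c_i \in \{0,1\}$), with $b = m$ and exactly $m+1$ of the $c_i$ equal to $1$ (case (3)), or $b = m+1$ and all $c_i = 1$ with $n = m+3$ (case (4)). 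The cases $a \geq 2$ must be excluded: here the bound $\sum c_i^2 \geq a^2$ combined with $n \leq m+3$ and the effectivity/genus constraints should yield a contradiction, which is where I expect the delicate bookkeeping to lie.

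**The main obstacle.**

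The hard part will be the $a = 1$ subcase: showing that the numerical constraints force each $c_i \in \{0,1\}$ and the precise count of nonzero $c_i$, rather than merely bounding them. One must use both that $E$ is irreducible (so $E \cdot (F - E_i) \geq 0$ and $E \cdot E_i \geq 0$, giving $c_i \geq 0$ and $c_i \leq 1$ respectively) and that the $n$ blown-up points are in general position (so no spurious $(-1)$-curves arise from special configurations). The bound $n \leq m+3$ is what makes the list finite and excludes, for instance, classes with $a = 1$ but more than $m+3$ required exceptional components. I would finish by verifying that each listed class is genuinely represented by an irreducible $(-1)$-curve—this is immediate for (1) and (2), and for (3) and (4) follows from the general-position hypothesis guaranteeing the relevant linear system is a single reduced irreducible curve.
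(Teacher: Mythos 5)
Your overall strategy---write $E \sim aQ+bF-\sum_i c_iE_i$ and solve the system coming from $E^2=-1$, $E\cdot K=-1$, $E\cdot F=a\ge 0$, $E\cdot Q\ge 0$ and $0\le c_i\le a$---is the same lattice computation the paper performs, and your treatment of the cases $a=0$ and $a=1$ is essentially identical to theirs. The genuine gap is the step you yourself flag as ``delicate bookkeeping'': the exclusion of $a\ge 2$. This is the crux of the lemma (it is what makes the list finite), you do not carry it out, and the inequality you propose to use, $\sum_i c_i^2\ge a^2$, is not the relevant one and does not suffice: for example with $d:=E\cdot Q=0$ the two numerical equations give $\sum_i c_i=(m+2)a-1$ and $\sum_i c_i^2=ma^2+1$, and for $a=2$, $m\ge 6$ these are compatible with $\sum c_i^2\ge a^2$ and even with the Cauchy--Schwarz bound $\sum c_i^2\ge(\sum c_i)^2/(m+3)$; ruling this case out requires an additional integrality/counting argument (one needs $m+4>n$ nonzero coefficients). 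The paper avoids all of this with one clean observation you are missing: the class $\Delta\coloneqq Q+(m+1)F-(E_1+\dots+E_n)$ is effective (its linear system is nonempty because $\dim|M+(m+1)F|=m+3\ge n$ on $\F_m$), so for $E\not\sim\Delta$ one has $0\le E\cdot\Delta=(m+1)a+d-\sum_ic_i$, which combined with $\sum_ic_i=(m+2)a+2d-1$ gives $a+d\le 1$ immediately and reduces everything to the three pairs $(a,d)\in\{(0,0),(0,1),(1,0)\}$ plus the exceptional class $\Delta$ itself (your case (4)).

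Two further remarks. First, your route is in fact completable without the paper's trick: from $n\le m+3$ one gets $\sum_i(a-c_i)\le a+1-2d$, while the two numerical equations give $\sum_i c_i(a-c_i)=(2a+1)(a-1)$; comparing these forces $a\le 1$ unless $(a,d)=(2,0)$, which is then killed by noting each term $c_i(2-c_i)\le 1$. But this is precisely the argument you deferred, so as written the proof is incomplete at its hardest point. Second, there is an arithmetic slip in your computation of $E\cdot K$: the term $aQ\cdot(-2Q)$ contributes $2ma$, not $ma$, so the genus condition should read $(2-m)a+2b-\sum_ic_i=1$ rather than $2a+2b-\sum_ic_i=1$; also $E\cdot Q\ge 0$ (not merely $\ge -1$), since $Q$ is a $(-m)$-curve with $m\ge 2$ and hence cannot equal the $(-1)$-curve $E$.
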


\begin{proof}
Since $\Pic (\tilde{S}_m^n)$ is generated by $Q$, $F$ and $E_1,\dots ,E_n$, we can write 
\[E \sim aQ + bF - \sum _{i=1}^nc_iE_i\] 
for some $a,b,c_1,\dots ,c_n \in \Z$. 
Since $E \cdot (-K_{\tilde{S}_m^n}) = 1$, we have $1 = (-m+2)a + 2b - \sum _{i=1}^nc_i$. By setting a non-negative integer $d \coloneqq E \cdot Q$, we have $d = -ma + b$. Hence, we obtain $(m+2)a + 2d - 1 = \sum _{i=1}^nc_i$. 

From now on, we may assume $E \not\sim \Delta \coloneqq Q + (m+1)F -(E_1 + \dots + E_n)$ when $n=m+3$. 
Since $|\Delta| \not= \emptyset$, we have $0 \le E \cdot \Delta = a + b - \sum _{i=1}^nc_i = (m+1)a + d - \sum _{i=1}^nc_i$. 
Hence, we obtain that 
\begin{align*}
(m+2)a + 2d - 1 \le (m+1)a + d,
\end{align*}
which implies that \(a + d \le 1\). Note that the number $a$ is a non-negative integer since we have $0 \le E \cdot F = a$. Hence, there are only three possible cases: $(0,0)$, $(1,0)$ and $(0,1)$. 

If $(a,d) = (0,0)$, then $E \sim - \sum _{i=1}^nc_iE_i$. 
Since $E^2 = - \sum _{i=1}^nc_i^2 = -1$, we have $E \sim E_i$ for some $i$. 

If $(a,d) = (0,1)$, then $E \sim F - \sum _{i=1}^nc_iE_i$. 
Since $E^2 = - \sum _{i=1}^nc_i^2 = -1$, we obtain that $E \sim F - E_i$ for some $i$. 

Now, if $(a,d) = (1,0)$, then $E \sim Q + mF - \sum _{i=1}^nc_iE_i$. 
Since the general member of $|Q+mF|$ is smooth rational, \(c_i\) is either \(0\) or \(1\) for every $i$.  By the equality $E^2 = m - \sum _{i=1}^nc_i^2 = -1$, we obtain that $E \sim Q + mF - (E_{i_1} + \dots + E_{i_{m+1}})$ for some indices $1 \le i_1 < i_2 < \dots < i_{m+1} \le n$ if $n \ge m+1$. 

Finally, if $E$ is not linearly equivalent to any of the divisors in (1)--(3), then $E \sim Q + (m+1)F-(E_1+\dots+E_{m+3})$ by the above argument. This completes the proof. 
\end{proof}

\begin{proof}[Proof of Theorem \ref{thm:intermediate}]
Let $F$ be a general fiber of the $\P^1$-fibration $Y_{\bar{\Bbbk}}\coloneqq \tilde{S}_{m}^{n} \to \F_m \to \P^1_{\bar{\Bbbk}}$, $E_1,\dots ,E_n$ the exceptional curves of blowing up from $\F_m$, and $E_i'$ be a $(-1)$-curve on $Y_{\bar{\Bbbk}}$ with $E_i' \sim F-E_i$. 

For (1), the union $\sum_{i=1}^nE_i$ is defined over \(\Bbbk\) by Lemma \ref{lem:middle curves}. Hence, we obtain a contraction $\tau\colon Y \to Z$ of $\sum_{i=1}^nE_i$, defined over \(\Bbbk\). Then $Z$ is a \(\Bbbk\)-form of the Hirzebruch surface $\F_m$ of degree $m$. 
Note that $\hat{Q} \coloneqq \tau_{\ast}(Q)$ and $\sum_{i=1}^n\hat{E}_i' \coloneqq \tau_{\ast}\left(\sum_{i=1}^nE_i'\right)$ are defined over $\Bbbk$. 
Since $Z \setminus \Supp \Bigg( \hat{Q} +  \sum_{i=1}^n\hat{E}_i'\Bigg)$ contains a cylinder of $Z$ (see Example \ref{ex:(0)}), there exists a cylinder of $S$ contained in
\begin{align*}
S \setminus \Supp \Bigg(\pi_{\ast}\bigg(\sum_{i=1}^n(E_i+E_i')\bigg)\Bigg) 
&\simeq Y \setminus \Supp \bigg(Q + \sum_{i=1}^n(E_i+E_i')\bigg) \\
&\simeq Z \setminus \Supp \Bigg( \hat{Q} +  \sum_{i=1}^n\hat{E}_i'\Bigg).
\end{align*}
In this case, therefore, $S$ is cylindrical. 

If $Q$ has a \(\Bbbk\)-rational point, then the image under $\tau$ is also \(\Bbbk\)-rational. By Lemma \ref{lem:Fm}, $Z$ is a trivial \(\Bbbk\)-form of $\F_m$, which implies $Z \simeq \F_m$. In particular, $Z$ is rational and so is $S$. 
Conversely, if $S$ is rational, then there exists a general fiber $F$ defined over \(\Bbbk\). Then the intersection point of $F$ and $Q$ is a \(\Bbbk\)-rational point. This completes the proof of \((1)\).

\smallskip

Now, we give a proof of \((2)\). We note that the union $\sum_{i=1}^{m+2}E_i'$ is defined over \(\Bbbk\) by Lemma \ref{lem:middle curves}. Hence, we obtain a contraction $\tau\colon Y \to Z$ of $\sum_{i=1}^{m+2}E_i'$, defined over \(\Bbbk\). Then $Z$ is a \(\Bbbk\)-form of $\P^1 \times \P^1$ and $\hat{Q} \coloneqq \tau _{\ast}(Q)$ is a \(\Bbbk\)-form of an irreducible curve of type $(1,1)$. 
By the assumption, $\hat{Q}$ contains a \(\Bbbk\)-rational point $\msp$. 
Hence, $S$ is rational by Lemma \ref{lem:p1p1}. 
Moreover, there exist two $0$-curves $\hat{F}$ and $\hat{F}'$ on $Z_{\bar{\Bbbk}}$ passing through $\msp$, which are \(\Bbbk\)-forms of irreducible curves of types $(1,0)$ and $(0,1)$, respectively, such that $\hat{F}+\hat{F}'$ is defined over \(\Bbbk\). 
Then there exists a cylinder of $S$ contained in 
\begin{align*}
S \setminus \Supp \Bigg(\pi_{\ast}\bigg( \sum_{i=1}^{m+2}E_i' + F+F'\bigg)\Bigg) 
&\simeq Y \setminus \Supp \Bigg( Q + \sum_{i=1}^{m+2}E_i' + F+F'\Bigg) \\
&\simeq Z \setminus \Supp (\hat{Q} + \hat{F}+\hat{F}'),
\end{align*}
where $F+F' \coloneqq \tau^{-1}_{\ast}(\hat{F}+\hat{F}')$ (see Example \ref{ex:(4)}). 
Hence, $S$ is cylindrical. 

\smallskip

To prove (3), we first note that there exists a unique $(-1)$-curve $E'$ such that $$E' \sim Q+(m+1)F-\sum_{i=1}^{m+3}E_i$$ by Lemma \ref{lem:middle curves}. Then the only $(-1)$-curves on $Y_{\bar{\Bbbk}}$ meeting $Q$ are only $E_1',\dots,E_{m+3}'$ and $E'$. This implies that the union $E'+\sum_{i=1}^{m+3}E_i'$ is defined over \(\Bbbk\). Hence, we obtain a contraction $\tau\colon Y \to Z$ of $E'+\sum_{i=1}^{m+3}E_i'$, defined over \(\Bbbk\). Thus, $Z$ is a \(\Bbbk\)-form of $\P^2_{\bar{\Bbbk}}$. More precisely, since $Z_{\bar{\Bbbk}}$ contains a conic $\hat{Q} \coloneqq \tau_{\ast}(Q)$ defined over \(\Bbbk\), $Z \simeq \P^2_{\Bbbk}$ by Lemma \ref{lem:line conic}. Hence, $S$ is rational. Moreover, by assumption, $\hat{Q}$ has a \(\Bbbk\)-rational point $\msp$. Let $\hat{T}$ be a tangent line of $\hat{Q}$ at $\msp$. Note that $\hat{T}$ is defined over \(\Bbbk\). 
Then there exists a cylinder of $S$ contained in 
\begin{align*}
S \setminus \Supp \Bigg(\pi_{\ast}\bigg( E' + \sum_{i=1}^{m+3}E_i' + T\bigg)\Bigg) 
&\simeq Y \setminus \Supp \Bigg( Q + E'+ \sum_{i=1}^{m+3}E_i' + T\Bigg) \\
&\simeq Z \setminus \Supp (\hat{Q} + \hat{T}),
\end{align*}
where $T \coloneqq \tau^{-1}_{\ast}(\hat{T})$ (see Example \ref{ex:(2)}). 
Therefore, $S$ is cylindrical. 
\end{proof}

\subsection{The invariant $\ell_S$}

In the rest of this section, we focus on the case when \(S\) is a \(\Bbbk\)-form of \(S_{m}^{m+4}\) or \(S_{m}^{m+5}\). To characterize the cylindricity and rationality in these cases, we need to define the invariant \(\ell_{S}\).

\begin{definition} \label{def:invariant}
Let \(S\) be a \(\Bbbk\)-form of either \(S_{m}^{m+4}\) or \(S_{m}^{m+5}\), \(\pi\colon Y\to S\) the minimal resolution, and \(Q\) the exceptional curve of \(\pi\), which is a $(-m)$-curve.

Let $\mathcal{E}$ be the collection of finite sets $\Sigma$ of $(-1)$-curves on $Y_{\bar{\Bbbk}}$ satisfying the following conditions:
\begin{enumerate}
    \item The curves in $\Sigma$ are pairwise disjoint, and each curve in \(\Sigma\) intersects $Q$.
    \item The set $\Sigma$ is invariant under the Galois action of $\gal(\bar{\Bbbk}/\Bbbk)$. 
\end{enumerate}
    We define the invariant $\ell_{S}$ as \(\ell_S \coloneqq \max \{ |\Sigma| \mid \Sigma \in \mathcal{E} \}.\)
\end{definition}

For the reader’s convenience, we shall calculate the invariant $\ell_S$ for an explicit log del Pezzo surface $S$. 

\begin{example}
Consider the surface
\[
S\coloneqq \left\{w^2=x^4+y^4+(x^2+y^2)z^2\right\}\subset \P(1,1,1,2)_{[x:y:z:w]},
\]
defined over \(\Q\). This is a \(\Q\)-form of \(S_2^6\), and \(S_{\bar{\Q}}\) has a unique singular point
\[
\msp=[0:0:1:0]
\]
of type \(\mathrm{A}_1\). A direct computation shows that \(S\) contains exactly \(12\) lines:
\begin{align*}
&L_1=\{y=ix,\ w=\sqrt{2}\,x^2\}, &
&L_2=\{y=ix,\ w=-\sqrt{2}\,x^2\}, \\
&L_3=\{y=-ix,\ w=\sqrt{2}\,x^2\}, &
&L_4=\{y=-ix,\ w=-\sqrt{2}\,x^2\}, \\
&L_5=\left\{y=e^{\frac{\pi i}{4}}x,\ w=\sqrt[4]{2}\,e^{\frac{\pi i}{8}}xz\right\}, &
&L_6=\left\{y=e^{\frac{\pi i}{4}}x,\ w=-\sqrt[4]{2}\,e^{\frac{\pi i}{8}}xz\right\}, \\
&L_7=\left\{y=-e^{\frac{\pi i}{4}}x,\ w=\sqrt[4]{2}\,e^{\frac{\pi i}{8}}xz\right\}, &
&L_8=\left\{y=-e^{\frac{\pi i}{4}}x,\ w=-\sqrt[4]{2}\,e^{\frac{\pi i}{8}}xz\right\}, \\
&L_9=\left\{y=e^{\frac{3\pi i}{4}}x,\ w=\sqrt[4]{2}\,e^{\frac{7\pi i}{8}}xz\right\}, &
&L_{10}=\left\{y=e^{\frac{3\pi i}{4}}x,\ w=-\sqrt[4]{2}\,e^{\frac{7\pi i}{8}}xz\right\}, \\
&L_{11}=\left\{y=-e^{\frac{3\pi i}{4}}x,\ w=\sqrt[4]{2}\,e^{\frac{7\pi i}{8}}xz\right\}, &
&L_{12}=\left\{y=-e^{\frac{3\pi i}{4}}x,\ w=-\sqrt[4]{2}\,e^{\frac{7\pi i}{8}}xz\right\}.
\end{align*}
Let \(\pi\colon Y\to S\) be the minimal resolution and set $\tilde{L}_i \coloneqq  \pi^{-1}_{\ast}(L_i)$ for $i=1,\dots ,12$. 
Note that the divisor $\sum_{j=1}^{12}\tilde{L}_j$ has the configuration shown in Figure \ref{fig:L}. 
Since the union $\sum_{j=1}^{12}L_j$ decomposes into two ${\rm Gal}(\bar{\Q}/\Q)$-orbits: 
\begin{align*}
L_1 + \dots + L_4,\quad L_5 + \dots + L_{12},
\end{align*}
any ${\rm Gal}(\overline{\Q}/\Q)$-orbit of $\tilde{L}_j$ cannot be contracted over $\Q$. Hence, \(\ell_S=0\). 
On the other hand, over \(\R\) the same surface satisfies \(\ell_S=6\). 
Indeed, since the union $\sum_{j=1}^{12}\tilde{L}_j$ decomposes into six ${\rm Gal}(\C/\R)$-orbits: 
\begin{align*}
L_1 + L_3,\quad L_2 + L_4,\quad L_5 + L_{11},\quad L_6 + L_{12},\quad L_7 + L_9,\quad L_8 + L_{10},
\end{align*}
we obtain the contraction $\tau: Y \to \F_1$ of $\Sigma$ defined over \(\R\), where $\Sigma \coloneqq  \tilde{L}_1+\tilde{L}_3+\tilde{L}_5+\tilde{L}_6+\tilde{L}_{11}+\tilde{L}_{12}$. 

If the base field is $\Q$ (resp. $\R$), then $S$ is neither cylindrical nor rational (resp. both cylindrical and rational) by virtue of the invariant $\ell_S$ combined with Theorem \ref{thm:m+4} below. 
\begin{figure}[ht]
\begin{center}
\begin{tikzpicture}[scale=0.35]
\draw (2,-1) -- (1,3);
\draw (2,6) -- (1,2);
\draw (4,-1) -- (3,3);
\draw (4,6) -- (3,2);
\draw (6,-1) -- (5,3);
\draw (6,6) -- (5,2);
\draw (8,-1) -- (7,3);
\draw (8,6) -- (7,2);
\draw (10,-1) -- (9,3);
\draw (10,6) -- (9,2);
\draw (12,-1) -- (11,3);
\draw (12,6) -- (11,2);

\node at (2,-1.5) {$\tilde{L}_1$};
\node at (4,-1.5) {$\tilde{L}_3$};
\node at (6,-1.5) {$\tilde{L}_5$};
\node at (8,-1.5) {$\tilde{L}_6$};
\node at (10,-1.5) {$\tilde{L}_9$};
\node at (12,-1.5) {$\tilde{L}_{10}$};
\node at (2,6.5) {$\tilde{L}_2$};
\node at (4,6.5) {$\tilde{L}_4$};
\node at (6,6.5) {$\tilde{L}_8$};
\node at (8,6.5) {$\tilde{L}_7$};
\node at (10,6.5) {$\tilde{L}_{12}$};
\node at (12,6.5) {$\tilde{L}_{11}$};
\end{tikzpicture}
\caption{Configuration of $\sum_{j=1}^{12}\tilde{L}_j$}\label{fig:L}
\end{center}
\end{figure}
\end{example}

\subsection{Case: $n=m+4$}
In this subsection, we deal with the case when \(S\) is a \(\Bbbk\)-form of \(S_{m}^{m+4}\).

\begin{theorem}\label{thm:m+4}
Let \(S\) be a \(\Bbbk\)-form of \(S_{m}^{m+4}\). The following assertions hold.
\begin{enumerate}[(1)]
\item If $\ell _S \le m$, then $S$ is neither cylindrical nor rational. 
\item If $\ell _S > m$, then $\ell _S$ is one of $m+1$, $m+2$, $m+4$. 
\item If $\ell _S = m+1$, then $S$ is both cylindrical and rational. 
\item If $\ell _S = m+2$ and $Q(\Bbbk) \neq\emptyset$, then $S$ is both cylindrical and rational. 
\item If $\ell _S = m+4$ and $m=2u-1$, then $S$ is both cylindrical and rational. 
\end{enumerate}
\end{theorem}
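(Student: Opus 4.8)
The plan is to base the whole analysis on a single conic bundle on $Y$ that is defined over $\Bbbk$. First I would classify the $(-1)$-curves on $Y_{\overline{\Bbbk}}$ meeting $Q$: running the computation of Lemma~\ref{lem:middle curves} with $n=m+4$ (write a class as $aQ+bF-\sum c_iE_i$ and impose $E\cdot(-K_Y)=1=-E^2$), the only ones are $E_i':=F-E_i$ and $G_i:=Q+(m+1)F-\sum_{j\ne i}E_j$ for $i=1,\dots,m+4$, with $E_i'\cdot Q=G_i\cdot Q=1$, $E_i'\cdot G_j=\delta_{ij}$, and all remaining intersection numbers zero. The key identity is $E_i'+G_i\sim -K_Y-Q$ for every $i$; since $(-K_Y-Q)^2=0$ and the class $-K_Y-Q$ is $\gal(\overline{\Bbbk}/\Bbbk)$-invariant, the induced pencil is a conic bundle $\phi\colon Y\to B$ over $\Bbbk$ with $B$ a $\Bbbk$-form of $\P^1$. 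Its degenerate fibres are exactly the $m+4$ pairs $E_i'+G_i$, and $Q$ is a bisection meeting both components of each. In this picture an element of $\mathcal E$ in Definition~\ref{def:invariant} is a Galois-invariant choice of one component in some degenerate fibres, so $\ell_S$ equals the number of degenerate fibres lying in \emph{split} Galois-orbits (those in which the classes $E_i'$ and $G_i$ are not interchanged); in particular $\ell_S\le m+4$. Contracting the chosen components resolves those fibres and yields a conic bundle $W\to B$ with $m+4-\ell_S$ degenerate fibres, $(-K_W)^2=4-m+\ell_S$, and image of $Q$ of self-intersection $\ell_S-m$. Throughout, resolving a fibre only raises the self-intersections of the sections, so no $(-2)$-curve is created.

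With this in hand, part~(2) amounts to excluding $\ell_S=m+3$. If it occurred, resolving the $m+3$ split fibres would leave one non-split fibre $E_{i_0}'+G_{i_0}$ whose two components are disjoint from all contracted curves, are conjugate over $\Bbbk$, and satisfy $E_{i_0}'\cdot G_{i_0}=1$; by the previous remark the image of $Q$ has self-intersection $3$ and no $(-2)$-curve appears, so $W$ is geometrically a del Pezzo surface of degree $7$. Such a surface has exactly three $(-1)$-curves, forming a chain whose middle curve (the unique one meeting the other two) is $\gal(\overline{\Bbbk}/\Bbbk)$-invariant and whose two ends are disjoint. But $E_{i_0}'$ and $G_{i_0}$ meet and are interchanged by some Galois element, so neither can be the Galois-fixed middle and they cannot be the two disjoint ends either, a contradiction. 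Hence $\ell_S\in\{m+1,m+2,m+4\}$ whenever $\ell_S>m$.

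For the positive cases I would resolve $\ell_S$ split fibres and read off the degree $4-m+\ell_S$. When $\ell_S=m+1$ the surface $W$ is a del Pezzo surface of degree $5$, which over any field has a $\Bbbk$-point and is rational and cylindrical, giving~(3). When $\ell_S=m+2$, $W$ is a del Pezzo surface of degree $6$ and the image of $Q$ has self-intersection $2$, so $Q(\Bbbk)\ne\emptyset$ supplies a $\Bbbk$-point of $W$; a degree-$6$ del Pezzo surface with a $\Bbbk$-point is rational and cylindrical, giving~(4). When $\ell_S=m+4$, resolving all fibres turns $\phi$ into a $\P^1$-bundle, so $W$ is a $\Bbbk$-form of a minimal ruled surface of degree $8$; since $m$ is odd, $Q$ has a $\Bbbk$-point, whence $B\simeq\P^1_{\Bbbk}$ and $W(\Bbbk)\ne\emptyset$, and identifying $W$ as a Hirzebruch form or a form of $\P^1\times\P^1$ and invoking Lemma~\ref{lem:Fm} or Lemma~\ref{lem:p1p1} gives rationality. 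In each case $S$, $Y$ and $W$ are birationally equivalent over $\Bbbk$, so $S$ is rational, and the cylinder is produced on $W$ and transported to $S$ exactly as in the proof of Theorem~\ref{thm:intermediate} (cf. Examples~\ref{ex:(0)},~\ref{ex:(2)},~\ref{ex:(4)}).

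Finally, for part~(1) I would resolve all $\ell_S\le m$ split fibres to obtain a relatively minimal conic bundle $W\to B$ with $m+4-\ell_S\ge 4$ non-split degenerate fibres and $(-K_W)^2=4-m+\ell_S\le 4$. The crux is to prove $W$ is $\Bbbk$-minimal. No component of a non-split fibre is defined over $\Bbbk$, so the fibration admits no relative contraction; and a single $\gal(\overline{\Bbbk}/\Bbbk)$-invariant $(-1)$-section is impossible, since a section meets exactly one of the two conjugate components of each non-split fibre. The remaining danger is a Galois-orbit of several conjugate, mutually disjoint $(-1)$-sections whose contraction would push the degree above $4$; ruling this out uniformly is precisely where the hypothesis $\ell_S\le m$ must be used, and I expect it to be the main obstacle of the theorem. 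Granting $\Bbbk$-minimality, Theorem~\ref{thm:Saw cyl iff} shows $W$ is neither rational nor cylindrical; rationality then fails for $S$ because $S$ is birational to $W$, and since $Y$ and $W$ are smooth and birational, Theorem~\ref{thm:Saw cyl bir} shows $Y$ is not cylindrical, whence $S$ is not cylindrical because any cylinder in $S$ lies in its smooth locus and thus in $Y$.
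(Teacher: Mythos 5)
Your overall strategy --- realize $Y$ as a conic bundle over $\Bbbk$ whose $m+4$ degenerate fibres are exactly the pairs of $(-1)$-curves meeting the bisection $Q$, read $\ell_S$ off as the number of fibres whose components can be separated Galois-equivariantly, contract the chosen components, and sort the resulting surface $W$ by its degree $4-m+\ell_S$ --- is exactly the paper's. Your exclusion of $\ell_S=m+3$ via the configuration of the three $(-1)$-curves on a degree $7$ del Pezzo surface is the paper's argument essentially verbatim, and the positive cases (3)--(5) follow the same contractions (to $\P^2_{\Bbbk}$, to a form of $\P^1\times\P^1$, to a form of $\F_{2u-1}$ or of $\P^1\times\P^1$). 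One caution on those cases: since $S$ is singular, you cannot simply produce a cylinder on the smooth model and quote birational invariance; the cylinder must be arranged to have $Q$ in its boundary so that it descends from $Y$ to $S$, which is why the paper writes out the explicit boundary divisors in each case. Your appeal to ``transported as in Theorem \ref{thm:intermediate}'' gestures at this but does not carry it out.

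The genuine gap is in part (1), and you have flagged it yourself: you do not prove that the contracted surface $Z$ (your $W$) is $\Bbbk$-minimal when $\ell_S\le m$, and without that Theorem \ref{thm:Saw cyl iff} cannot be applied. As you observe, the remaining danger is a Galois orbit $\overline{E}$ consisting of $r$ pairwise disjoint conjugate $(-1)$-curves, each necessarily a one-section of $\psi\colon Z\to B$ (none can be a fibre component, by maximality of $\ell_S$). The paper closes this in Lemma \ref{lem:not rational} by a Riemann--Roch argument: set $\overline{\Delta}\coloneqq -rK_Z-2\overline{E}$ and check $\overline{\Delta}\cdot\overline{F}=0$ and $\overline{\Delta}^2=-(m-\ell_S)r^2-4r<0$, which is precisely where the hypothesis $\ell_S\le m$ enters; then Riemann--Roch on the geometrically rational surface $Z$ forces $|\overline{\Delta}+\lambda\overline{F}|\neq\emptyset$ for $\lambda\gg0$, and the resulting effective divisor is vertical (it meets $\overline{F}$ trivially) yet has negative self-intersection, producing a curve contractible over $\Bbbk$ inside a fibre and contradicting the construction of $\tau$. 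Without this (or an equivalent) argument, part (1) --- which is the substance of the theorem --- remains unproved; the rest of your proposal is sound modulo the descent-of-cylinders point above.
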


By Theorem \ref{thm:m+4} and Lemma \ref{lem:odd points}, we obtain the following corollary.

\begin{corollary}\label{cor:m+4}
Let $S$ be a \(\Bbbk\)-form of $S_{2u-1}^{2u+3}$ for \(u\geq 2\). 
Then
\begin{align*}
\text{$S$ is cylindrical} \iff \text{$S$ is rational} \iff \ell_{S} \ge 2u. 
\end{align*}
\end{corollary}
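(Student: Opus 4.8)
The plan is to assemble the five cases of Theorem \ref{thm:m+4}, specialised to $m = 2u-1$, into the desired three-way equivalence. Setting $m = 2u-1$ gives $m+1 = 2u$, $m+2 = 2u+1$ and $m+4 = 2u+3$, so the threshold ``$\ell_S \ge 2u$'' coincides with ``$\ell_S > m$''. Since each clause of Theorem \ref{thm:m+4} asserts cylindricity and rationality simultaneously (or denies both at once), it suffices to establish two implications: if $\ell_S \le 2u-1$ then $S$ is neither cylindrical nor rational, and if $\ell_S \ge 2u$ then $S$ is both. The first is immediate from Theorem \ref{thm:m+4}(1). For the second, Theorem \ref{thm:m+4}(2) restricts $\ell_S$ to the set $\{2u, 2u+1, 2u+3\}$; the values $\ell_S = 2u$ and $\ell_S = 2u+3$ are dispatched directly by parts (3) and (5) of the same theorem, where part (5) applies precisely because $m = 2u-1$ is odd.

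The only case demanding genuine work is $\ell_S = 2u+1 = m+2$, since Theorem \ref{thm:m+4}(4) carries the extra hypothesis $Q(\Bbbk) \neq \emptyset$, and the task is to verify this hypothesis automatically. I would invoke the structure set up in the proof of Theorem \ref{thm:m+4}: a maximal Galois-invariant, disjoint, contractible family $\Sigma$ realizing $\ell_S = m+2$ may be taken to be $\{E_1,\dots,E_{m+2}\}$, a subfamily of the distinguished $(-1)$-curves $E_1,\dots,E_{m+4}$, each satisfying $E_i \cdot Q = 1$. Hence each $E_i$ meets $Q$ at a single point, and because the $E_i$ are pairwise disjoint these $m+2$ points are pairwise distinct. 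As $\Sigma$ is $\gal(\overline{\Bbbk}/\Bbbk)$-invariant and $Q$ is defined over $\Bbbk$, the resulting reduced $0$-cycle of $m+2$ points on $Q$ is defined over $\Bbbk$. Since $m = 2u-1$, the cardinality $m+2 = 2u+1$ is odd, so Lemma \ref{lem:odd points}, applied to the $\Bbbk$-form $Q$ of $\P^1$, yields a $\Bbbk$-rational point on $Q$. This discharges the hypothesis of Theorem \ref{thm:m+4}(4) and closes the case.

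The main (and essentially only) obstacle is this last step: recognizing that the parity forced by $m$ odd upgrades the purely geometric invariant $\ell_S = m+2$ into the existence of a $\Bbbk$-point on $Q$. The supporting verifications—that every relevant $(-1)$-curve satisfies $E \cdot Q = 1$ and that disjointness separates the corresponding intersection points—are routine consequences of the intersection-theoretic computations in Lemma \ref{lem:(-1)curves}, but the conceptual pivot is the passage through Lemma \ref{lem:odd points}, which is exactly what eliminates the auxiliary rational-point hypothesis and makes the equivalence hold unconditionally for odd $m$.
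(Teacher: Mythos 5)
Your proposal is correct and follows exactly the route the paper intends: the paper's proof of this corollary is the one-line citation ``By Theorem \ref{thm:m+4} and Lemma \ref{lem:odd points}'', and your expansion -- in particular, resolving the $\ell_S = m+2 = 2u+1$ case by observing that the $2u+1$ disjoint curves of $\Sigma$ cut out an odd, Galois-invariant set of distinct points on $Q$, so Lemma \ref{lem:odd points} supplies the $\Bbbk$-rational point required by Theorem \ref{thm:m+4}(4) -- is precisely the intended argument. The remaining cases are dispatched from Theorem \ref{thm:m+4}(1), (2), (3), (5) just as you describe.
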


In what follows, we shall prove Theorem \ref{thm:m+4}. 
By {\cite[Lemma 2.14]{CP20}}, there exists a birational morphism $\eta \colon \widetilde{S}_m^{m+4} \to \P^2_{\bar{\Bbbk}}$, which is a blow-up at $(m+4)$ points on an irreducible conic \(C\) on \(\P^2\) and at one point outside of \(C\) (see also {\cite[\S 4]{KKW25}}). 
Using this birational model, we prove the following description of the $(-1)$-curves disjoint from $Q$: 

\begin{lemma}\label{lem:(-1)curves}
Let $\eta \colon \widetilde{S}_m^{m+4} \to \P^2$ be a blow-up at $(m+4)$ points on an irreducible conic \(C\) on \(\P^2\) and at one point outside of \(C\). Let $e_0$ be the strict transform of a general line on $\P^2$ and $e_1,\dots,e_{m+5}$ the exceptional curves of $\eta$. Then there exist \((2m+8)\) \((-1)\)-curves $E_1,\dots ,E_{m+4}, E_1',\dots, E_{m+4}'$ such that $E_i \sim e_i$ and $E_i' \sim e_0-e_i-e_{m+5}$ for $i=1,\dots ,m+4$. 
Let $E$ be a $(-1)$-curve on $\widetilde{S} _m^{m+4}$ that does not meet $Q\coloneqq \eta_{\ast}^{-1}C$. 
Then we have the following.
\begin{enumerate}[(1)]
\item There exists $0 \le d \le \lfloor \frac{m}{2}+2 \rfloor$ such that 
\begin{align*}
E \sim de_0 - e_{i_1} - e_{i_2} - \dots - e_{i_{2d}} - (d-1)e_{m+5}
\qquad (1 \le i_1 < i_2 < \dots < i_{2d} \le m+4). 
\end{align*}
Here, if $m = 2n-1$ (resp. $m=2n$), then $\lfloor \frac{m}{2}+2 \rfloor = n+1$ (resp. $\lfloor \frac{m}{2}+2 \rfloor = n+2$). 
\item $E \cdot (E_i + E_i') = 1$ for every $i=1,\dots,m+4$. 
\end{enumerate}
\end{lemma}

\begin{proof}
Let $E \sim de_0 - \sum _{j=1}^{m+5}\mu _j e_j$ for some non-negative integers $d$ and $\mu_1,\dots ,\mu _{m+5}$. Since $Q \sim 2e_0 - \sum _{j=1}^{m+4}e_j$ and $E$ is a $(-1)$-curve, we have the following equalities
\begin{align*}
0 &= E \cdot Q = 2d - \sum _{j=1}^{m+4} \mu _j, \\
-1 &= E^2 = d^2 - \sum _{j=1}^{m+4} \mu _j^2 - \mu _{m+5}^2, \\
1 &= E \cdot (-K_{\widetilde{S}_m^{m+4}}) = 3d - \sum _{j=1}^{m+4} \mu _j - \mu _{m+5}. 
\end{align*}
Hence, we obtain $\mu _{m+5} = d-1$. In particular, we obtain that \(-1 = d^2 - \sum _{j=1}^{m+4} \mu _j^2 - (d-1)^2\) which implies that \(0 = 2d - \sum _{j=1}^{m+4} \mu _j^2\).
Thus, we have \(2d = \sum _{j=1}^{m+4} \mu _j = \sum _{j=1}^{m+4} \mu _j^2,\)
which implies that $0 \le d \le \lfloor \frac{m}{2}+2 \rfloor$ and
\begin{align*}
\sum _{j=1}^{m+4}\mu _j e_j = e_{i_1} + e_{i_2} + \dots + e_{i_{2d}}\qquad (1 \le i_1 < i_2 < \dots < i_{2d} \le m+4). 
\end{align*}
This completes the proof. 
\end{proof}

\begin{remark}
By Lemma \ref{lem:(-1)curves}, the configuration of $Q+E_1+\dots+E_{m+4}+E_1'+\dots+E_{m+4}'$ looks like that in Figure \ref{fig:m+4}. 
\begin{figure}[ht]
\begin{center}
\begin{tikzpicture}[scale=0.35]
\draw [thick] (0,5) 
.. controls (0,5) .. (7,5)
.. controls (7,5) and (8,4.75) .. (8.25,2.5)
.. controls (8.25,2.5) and (8,0.25) .. (7,0.5)
.. controls (7,0.5) .. (0,0.5);
\draw (2,-1) -- (1,3);
\draw (2,6) -- (1,2);
\draw (6,-1) -- (5,3);
\draw (6,6) -- (5,2);
\node at (3.75,3.5) {$\cdots$};
\node at (3.75,1.5) {$\cdots$};

\node at (-1,5) {$Q$};
\node at (2,-1.5) {$E_1$};
\node at (6,-1.5) {$E_{m+4}$};
\node at (2,6.5) {$E_1'$};
\node at (6,6.5) {$E_{m+4}'$};
\end{tikzpicture}
\caption{Configuration of $Q+E_1+\dots+E_{m+4}+E_1'+\dots+E_{m+4}'$}\label{fig:m+4}
\end{center}
\end{figure}
\end{remark}

From now on, we prove Theorem \ref{thm:m+4}, which describes how the invariant $\ell_S$
restricts the possible configurations of $(-1)$‑curves and determines rationality and cylindricity.

Let $S$ be a \(\Bbbk\)-form of $S_m^{m+4}$, and $\pi\colon Y \to S$ the minimal resolution. 
Note that $Y_{\bar{\Bbbk}} \simeq \widetilde{S} _m^{m+4}$ contains a $(-m)$-curve $Q$, which is defined over \(\Bbbk\), and there exist $(2m+8)$ $(-1)$-curves $E_1,\dots ,E_{m+4}, E_1',\dots E_{m+4}'$ meeting $Q$ such that $E_i \cdot E_j' = \delta _{i,j}$. 

Then a divisor $E_1+E_1'$ defines a $\P ^1$-fibration $\Phi \colon Y_{\bar{\Bbbk}} \to \P ^1_{\bar{\Bbbk}}$, which has exactly $m+4$ singular fibers $F_i \coloneqq E_i+E_i'$ $(i=1,\dots ,m+4)$. Note that $\Phi$ is defined over \(\Bbbk\) since $\sum _{j=1}^{m+4}F_i$ is defined over \(\Bbbk\). In other words, we obtain a surjective morphism $\varphi \colon Y \to B$ over \(\Bbbk\) such that $\varphi _{\bar{\Bbbk}} = \Phi$. We may assume that $E_1+\dots +E_{\ell _S}$ can be contracted over \(\Bbbk\). Thus, we obtain the contraction $\tau \colon Y \to Z$ of $E_1+\dots +E_{\ell_S}$, defined over \(\Bbbk\). Then there exists a surjective morphism $\psi \colon Z \to B$ defined over \(\Bbbk\) such that $\psi \circ \tau = \varphi$. 

\begin{align*}
\xymatrix{
& Y \ar[dl]_{\pi} \ar[d]^{\varphi} \ar[rrr]^{\tau} &&& Z \ar[d]^{\psi} \\
S & B \ar@{=}[rrr] &&& B
}
\end{align*}

\begin{lemma}\label{lem:not rational}
If $\ell_S \le m$, then $Z$ is \(\Bbbk\)-minimal. 
\end{lemma}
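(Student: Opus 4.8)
The plan is to read off \(\Bbbk\)-minimality from the invariant Mori cone of the conic bundle \(\psi\colon Z\to B\). First I would record the numerical data of the image \(\hat Q\coloneqq\tau_\ast Q\) of the bisection. Since \(E_1,\dots,E_{\ell_S}\) are pairwise disjoint \((-1)\)-curves each meeting \(Q\) transversally in one point, contracting them gives \(\hat Q^2=Q^2+\ell_S=\ell_S-m\), \(\hat Q\cdot\hat F=Q\cdot F=2\) and \(-K_Z\cdot\hat Q=-K_Y\cdot Q+\ell_S=2-m+\ell_S\), where \(\hat F\) denotes a general fibre of \(\psi\). In particular \(\hat Q\) is a bisection of \(\psi\) and \(\hat Q^2\le 0\) under the hypothesis \(\ell_S\le m\).

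Second, I would show \(\psi\) is relatively \(\Bbbk\)-minimal, so that \(\Pic(Z)^{\gal}\) has rank \(2\). By Lemma \ref{lem:(-1)curves} (together with the analogous computation for curves meeting \(Q\)) every \((-1)\)-curve on \(Y_{\overline{\Bbbk}}\) is either a fibre component \(E_i,E_i'\) of \(\varphi\) or a section of \(\varphi\) disjoint from \(Q\); hence the \((-1)\)-curves of \(Z_{\overline{\Bbbk}}\) lying in fibres of \(\psi\) are the \(\hat E_i,\hat E_i'\) with \(i>\ell_S\). If some \(\gal(\overline{\Bbbk}/\Bbbk)\)-invariant set of these were contractible over \(\Bbbk\), then pulling it back to \(Y\) and adjoining \(\{E_1,\dots,E_{\ell_S}\}\) would yield a \(\gal\)-invariant set of pairwise disjoint \((-1)\)-curves meeting \(Q\) of cardinality strictly exceeding \(\ell_S\) (the new curves lie over fibres \(i>\ell_S\), hence are disjoint from the \(E_i\)), contradicting the maximality in Definition \ref{def:invariant}. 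Thus no fibre of \(\psi\) can be partially contracted over \(\Bbbk\), and the relatively minimal conic bundle \(\psi\) over the \(\Bbbk\)-form \(B\) of \(\P^1\) has \(\Pic(Z)^{\gal}\) of rank \(2\).

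Third, I would compute \(\overline{\NE}(Z)^{\gal}\). The classes \(\hat F\) and \(\hat Q\) are independent because their intersection matrix has determinant \(-4\), and for any irreducible invariant curve \(C\neq\hat Q\) that is not a fibre one has \(C\cdot\hat F\ge 0\) and \(C\cdot\hat Q\ge 0\); writing \([C]=a\hat F+b\hat Q\) these inequalities give \(b\ge0\) and \(2a\ge b(m-\ell_S)\ge0\), so \([C]\) lies in \(\R_{\ge0}\hat F+\R_{\ge0}\hat Q\). Hence \(\overline{\NE}(Z)^{\gal}=\R_{\ge0}[\hat F]+\R_{\ge0}[\hat Q]\). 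The ray \([\hat F]\) is of fibre type. For \([\hat Q]\): if \(\ell_S\le m-2\) then \(-K_Z\cdot\hat Q=2-m+\ell_S\le0\), so \([\hat Q]\) is not \(K_Z\)-negative and supports no Mori contraction; if \(\ell_S=m\) then \(\hat Q^2=0\), so \([\hat Q]\) is again of fibre type (indeed \(-K_Z\) is then positive on both rays, \((-K_Z)^2=4\), and \(Z\) is a \(\Bbbk\)-minimal del Pezzo surface of degree \(4\) with a second conic-bundle structure). In either case the only \(K_Z\)-negative extremal contraction is the fibre-type contraction \(\psi\), so \(Z\) is a Mori fibre space and therefore \(\Bbbk\)-minimal.

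The hard part is the borderline value \(\ell_S=m-1\), which the two cases above do not cover: there \(\hat Q^2=-1\) and \(-K_Z\cdot\hat Q=1\), so \(\hat Q\) is a \(\gal\)-invariant \((-1)\)-curve. Its anticanonical degree being odd forces \(\hat Q\simeq\P^1_{\Bbbk}\), so it can be contracted over \(\Bbbk\) and \(Z\) would fail to be minimal. The lemma thus forces \(\ell_S\neq m-1\), and I expect to rule this value out exactly as \(\ell_S=m+3\) is excluded in part (2) of Theorem \ref{thm:m+4}. The key point is that, because \(Q\) is a bisection, every section of \(\varphi\) meets an even number of the fibre components \(E_1,\dots,E_{m+4}\) (the integer \(2d\) in Lemma \ref{lem:(-1)curves}); equivalently the map sending \(g\in\gal(\overline{\Bbbk}/\Bbbk)\) to the set of fibres whose two components \(E_i,E_i'\) it interchanges is a cocycle valued in the even-weight subgroup of \((\Z/2)^{m+4}\). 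Translating the maximal contractible \(\gal\)-invariant configuration into this cocycle and into the induced double cover \(\hat Q\to B\) should show that the defect \(m+4-\ell_S\) cannot take the value \(5\), just as it cannot take the value \(1\). Carrying out this parity and descent bookkeeping for the double cover is the most delicate step.
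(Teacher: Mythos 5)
Your treatment of the cases $\ell_S\le m-2$ and $\ell_S=m$ is correct and takes a genuinely different route from the paper. You both begin identically, using the maximality of $\ell_S$ to show that no fibre of $\psi$ can be partially contracted over $\Bbbk$, so that $\rho_{\Bbbk}(Z)=2$. But where you then compute $\overline{\NE}(Z)^{\gal}=\R_{\ge0}[\hat F]+\R_{\ge0}[\hat Q]$ and check that neither ray can carry the class of a curve contractible over $\Bbbk$, the paper argues by contradiction: it asserts that every geometric component of a contractible curve $\overline E$ must be a one-section of $\psi_{\overline{\Bbbk}}$, forms $\overline\Delta=-rK_Z-2\overline E$, and uses Riemann--Roch to produce an effective divisor $\overline D\sim\overline\Delta+\lambda\overline F$ supported in fibres with $\overline D^2<0$, contradicting relative minimality. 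Your cone computation is arguably cleaner where it applies, and the two mechanisms are genuinely distinct.

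The genuine gap is exactly the borderline case $\ell_S=m-1$ that you flag but do not close. There $\hat Q$ is a geometrically irreducible, Galois-invariant $(-1)$-curve, hence smoothly contractible over $\Bbbk$, so the lemma can only hold if that value of $\ell_S$ is impossible -- and your proposed exclusion has no visible mechanism. The exclusion of $\ell_S=m+3$ in Theorem~\ref{thm:m+4}(2) works because $Z$ is then a degree-$7$ del Pezzo surface whose three $(-1)$-curves form a rigid chain in which $\overline E_{m+4}$ and $\overline E{}'_{m+4}$ are distinguished by their intersection with the invariant curve $\overline L$; for $\ell_S=m-1$ the surface $Z$ has degree $3$ and carries many $(-1)$-curves, and no analogous rigidity is available. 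Moreover, the evenness of the integer $2d$ in Lemma~\ref{lem:(-1)curves} constrains which fibre components a \emph{section} meets, not the total number of fibres lying in Galois orbits that admit no invariant choice of component, which is what controls the defect $m+4-\ell_S$; a defect of $5$ is consistent with every constraint you list. So the ``parity and descent bookkeeping'' would need an input you have not identified, and as written your argument establishes the lemma only for $\ell_S\le m-2$ and $\ell_S=m$. (For what it is worth, you have located the thin point of the paper's own proof as well: the assertion there that every component of a contractible curve is a one-section silently excludes the bisection $\hat Q$, i.e.\ presupposes $\ell_S\neq m-1$.)
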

\begin{proof}
Suppose, on the contrary, that $Z$ is not \(\Bbbk\)-minimal. In other words, there exists a smoothly contractible curve $\bar{E}$ on $Z$. 
Let $r$ be the number of such smoothly contractible curves $\bar{E}_{\bar{\Bbbk}}$. By construction of $\tau$, note that every irreducible component of $\bar{E}_{\bar{\Bbbk}}$ is not a fiber component of $\psi_{\bar{\Bbbk}}$. Hence, every irreducible component of $\bar{E}_{\bar{\Bbbk}}$ is a $(-1)$-curve and a one-section of $\psi _{\bar{\Bbbk}}$. That is, $-\bar{E}^2 = \bar{E} \cdot (-K_Z) = r$ and $r(-K_Z \cdot \bar{F}) = 2\bar{E} \cdot \bar{F}$, where $\bar{F}$ is the Galois orbit of a general fiber of $\psi_{\bar{\Bbbk}}$. We consider the divisor $\bar{\Delta} \coloneqq -rK_Z - 2\bar{E}$ on $Z$. Then $\bar{\Delta} \cdot \bar{F} = 0$ and $\bar{\Delta}^2 = -(m-\ell_S)r^2 - 4r < 0$ by the assumption $\ell _S \le m$. Hence, by the Riemann--Roch theorem and since $Z$ is geometrically rational, there exists an integer $\lambda \gg 0$ such that
\begin{align*}
\dim |\bar{\Delta} + \lambda \bar{F}| \ge \frac{1}{2}(\bar{\Delta} + \lambda \bar{F}) \cdot (\bar{\Delta} + \lambda \bar{F} - K_{Z}) 
= \frac{1}{2}\bar{\Delta} \cdot (\bar{\Delta} - K_{Z}) + \frac{\lambda}{2}\bar{F} \cdot (- K_{Z}) \ge 0. 
\end{align*}
Hence, there exists an effective $\Z$-divisor $\bar{D}$ on $Z$ defined over \(\Bbbk\) such that $\bar{D} \sim \bar{\Delta} + \lambda \bar{F}$. Since $\bar{D} \cdot \bar{F} = 0$, every irreducible component of $\bar{D}$ is a fiber component of $\psi$. Moreover, from $\bar{D}^2 = \bar{\Delta}^2 < 0$, we deduce that there exists a smoothly contractible curve over \(\Bbbk\), which is contained in a fiber component of $\psi$. This contradicts the construction of $\tau$. 
\end{proof}

\begin{proof}[Proof of Theorem \ref{thm:m+4}]
We first prove \((1)\). Assume $\ell_S \le m$. By Lemma \ref{lem:not rational}, $Z$ is \(\Bbbk\)-minimal. Moreover, $\rho _{\Bbbk}(Z) = 2$ since \(Z\) admits a fibration $\psi$. 
Since $(-K_Z)^2 = -m + 4 + \ell _S \le 4$, we know that $Z$ is not cylindrical by Theorem \ref{thm:Saw cyl iff}. 
Hence, $Y$ is neither cylindrical nor rational by Theorem \ref{thm:Saw cyl bir}, which implies \(S\) is also neither cylindrical nor rational.

\smallskip

From now on, let us assume that $\ell_S > m$, which implies that $Z$ is a smooth del Pezzo surface of degree $\ge 5$. Since $\rho_{\Bbbk}(Z) > 1$, we know that $(-K_Z)^2 \in \{ 5,6,7,8\}$. Thus, we have $m+1 \le \ell _S \le m+4$. 

Suppose that $\ell_S = m+3$. 
Then $Z$ is a smooth del Pezzo surface of degree $7$. 
Hence, there exists a unique $(-1)$-curve $\bar{L}$ on $Z_{\bar{\Bbbk}}$ such that $\bar{L} \cdot (\bar{E}_{m+4}+\bar{E'}_{m+4}) = 1$, where $\bar{E}_{m+4} \coloneqq \tau _{\ast}(E_{m+4})$ and $\bar{E'}_{m+4} \coloneqq \tau _{\ast}(E_{m+4}')$. 
Note that $Z_{\bar{\Bbbk}}$ contains exactly three $(-1)$-curves $\bar{E}_{m+4}$ and $\bar{E'}_{m+4}$ and $\bar{L}$. 
By the configuration of $\bar{E}_{m+4}+\bar{E'}_{m+4}+\bar{L}$, the curves $\bar{E}_{m+4}$ and $\bar{E'}_{m+4}$ are not contained in the same Galois orbit, which is a contradiction to $\ell_S = m+3$. Therefore, this proves \((2)\).

\smallskip

Now, let us give a proof of \((3)\). Assume that $\ell_S = m+1$. Then $Z$ is a smooth del Pezzo surface of degree $5$. Hence, there exist exactly four $(-1)$-curves $\bar{E}_1'',\bar{E}_2'',\bar{E}_3''$ and $\bar{E}_4''$, which are not fiber components of $\psi _{\bar{\Bbbk}}$. Note that $\sum_{j=1}^4\bar{E}_j''$ is defined over \(\Bbbk\) and is a disjoint union. Hence, so is $\sum_{j=1}^4E_j'' \coloneqq \tau ^{-1}_{\ast}\bigg(\sum_{j=1}^4\bar{E}_j''\bigg)$. 
Let $\sigma \colon Z \to W$ be the contraction of $\sum_{j=1}^4\bar{E}_j''$, defined over \(\Bbbk\). Then $W$ is a \(\Bbbk\)-form of $\P ^2$. For simplicity, set $\hat{Q} \coloneqq (\sigma \circ \tau)_{\ast}(Q)$. Since $\hat{Q}$ is a $1$-curve, we know $\hat{Q} \simeq \P ^1_{\Bbbk}$ and $W \simeq \P ^2_{\Bbbk}$ by Lemma \ref{lem:line conic}. That is, $S$ is rational. Set $\msp_i \coloneqq \sigma _{\bar{\Bbbk}}(\bar{E}''_i)$ for $i=1,2,3,4$. Since $\hat{Q} \simeq \P ^1_{\Bbbk}$, we can take a general \(\Bbbk\)-rational point $\msp_0$ on $\hat{Q}$. For $i=1,2,3,4$, let $\hat{L}_i$ be the line on $W_{\bar{\Bbbk}} \simeq \P ^2_{\bar{\Bbbk}}$ passing through $\msp_0$ and $\msp_i$. Note that $\sum_{j=1}^4\hat{L}_j$ is defined over \(\Bbbk\). Hence, so is $\sum_{j=1}^4L_j \coloneqq (\sigma \circ \tau)^{-1}_{\ast}\left(\sum_{j=1}^4\hat{L}_j\right)$. 
Then there exists a cylinder of $S$ contained in 
\begin{align*}
S \setminus \Supp \Bigg(\pi _{\ast}\bigg(\sum_{i=1}^{m+1}E_i + \sum_{j=1}^4(E_j''+L_j)\bigg)\Bigg) 
&\simeq Y \setminus \Supp \bigg(Q + \sum_{i=1}^{m+1}E_i + \sum_{j=1}^4(E_j''+L_j)\bigg) \\
& \simeq W \setminus \Supp \bigg(\hat{Q}+\sum_{j=1}^4\hat{L}_j\bigg),
\end{align*}
which implies that $S$ is cylindrical (see Example \ref{ex:(1)}). 

Next, let us assume that $\ell_S = m+2$ and $Q(\Bbbk) \neq\emptyset$. Then $Z$ is a smooth del Pezzo surface of degree $6$. Hence, there exist exactly two $(-1)$-curves $\bar{E}_1''$ and $\bar{E}_2''$ on $Z_{\bar{\Bbbk}}$, which are not fiber components of $\psi _{\bar{\Bbbk}}$, such that the union $\bar{E}_1''+\bar{E}_2''$ is disjoint. Hence, we obtain the contraction $\sigma \colon Z \to W$ of $\bar{E}_1''+\bar{E}_2''$, defined over \(\Bbbk\). Then $W$ is a \(\Bbbk\)-form of $\P^1 \times \P^1$. 
Since $Q(\Bbbk) \neq\emptyset$, we know that $\hat{Q}(\Bbbk) \neq\emptyset$, where $\hat{Q} \coloneqq (\sigma \circ \tau)_{\ast}(Q)$. Hence, $S$ is rational by Lemma \ref{lem:p1p1}. 

Since $\hat{Q}(\Bbbk) \not= \emptyset$, we can take a $\Bbbk$-rational point $\msp_0$ on $\hat{Q}$. Note that $\hat{Q}_{\bar{\Bbbk}}$ is of type $(1,1)$. 
Let $\msp_i \coloneqq \sigma (\bar{E}_i'')$ for $i=1,2$, and $\hat{L}_1$ and $\hat{L}_2$ be irreducible curves of types $(1,0)$ and $(0,1)$ on $W_{\bar{\Bbbk}}$ passing through $\msp_0$, respectively. Note that $\hat{L}_1+\hat{L}_2$ is defined over \(\Bbbk\). Let $E_1''+E_2'' \coloneqq \tau ^{-1}_{\ast}(\bar{E}_1''+\bar{E}_2'')$ and $L_1+L_2 \coloneqq (\sigma \circ \tau)^{-1}_{\ast}(\hat{L}_1+\hat{L}_2)$. Assume that $\msp_1,\msp_2 \in \Supp (\hat{L}_1+\hat{L}_2)$. 
Then we have that there exists a cylinder of $S$ contained in 
\begin{align*}
&S \setminus \Supp \Bigg(\pi_{\ast}\bigg(\sum_{i=1}^{m+2}E_i + E_1'' + E_2'' + L_1 + L_2\bigg)\Bigg) \\
&\qquad \qquad \simeq Y \setminus \Supp \bigg(Q + \sum_{i=1}^{m+2}E_i + E_1'' + E_2'' + L_1 + L_2\bigg) \\
&\qquad \qquad \simeq W \setminus \Supp (\hat{Q} + \hat{L}_1 + \hat{L}_2), 
\end{align*}
which implies that $S$ is cylindrical (see Example \ref{ex:(4)}). 

From now on, let us assume that $\msp_1,\msp_2 \not\in \Supp (\hat{L}_1+\hat{L}_2)$. 
Let $\hat{C}_1$ be an irreducible curve of type $(1,1)$ on $W_{\bar{\Bbbk}}$ passing through $\msp_1$ and satisfying $I_{\msp_0}(\hat{Q},\hat{C}_i) = 2$. If $\msp_2 \in \hat{C}_1$, then $\hat{C}_1$ is defined over \(\Bbbk\). 
Set $C_1 \coloneqq (\sigma \circ \tau)^{-1}_{\ast}(\hat{C}_1)$. 
Then there exists a cylinder of $S$ contained in 
\begin{align*}
&S \setminus \Supp \Bigg(\pi_{\ast}\bigg(\sum_{i=1}^{m+2}E_i + E_1'' + E_2'' + L_1 + L_2 + C_1\bigg)\Bigg) \\
&\qquad \qquad \simeq Y \setminus \Supp \bigg(Q+\sum_{i=1}^{m+2}E_i + E_1'' + E_2'' + L_1 + L_2 + C_1\bigg) \\
&\qquad \qquad \simeq W \setminus \Supp (\hat{Q}+\hat{L}_1+\hat{L}_2+\hat{C}_1),
\end{align*}
which implies that $S$ is cylindrical (see Example \ref{ex:(4)}). 

In what follows, let us assume that $\msp_2 \not\in \hat{C}_1$. Then there exists a unique irreducible curve $\hat{C}_2$ on $W_{\bar{\Bbbk}}$ such that ${\rm Gal}(\bar{\Bbbk}/\Bbbk) \cdot \hat{C}_1 = \hat{C}_1 \cup \hat{C}_2$. 
Note that $\hat{C}_2$ is an irreducible type $(1,1)$ on $W_{\bar{\Bbbk}}$ passing through $\msp_2$ and satisfying $I_{\msp_0}(\hat{Q},\hat{C}_i) = 2$. Since $\hat{C}_1+\hat{C}_2$ is defined over \(\Bbbk\), so is $C_1+C_2 \coloneqq (\sigma \circ \tau)^{-1}_{\ast}(\hat{C}_1+\hat{C}_2)$. 
Then there exists a cylinder of $S$ contained in 
\begin{align*}
&S \setminus \Supp \Bigg(\pi_{\ast}\bigg(\sum_{i=1}^{m+2}E_i + E_1'' + E_2'' + L_1 + L_2 + C_1 + C_2\bigg)\Bigg) \\
&\qquad \qquad \simeq Y \setminus \Supp \bigg(Q+\sum_{i=1}^{m+2}E_i + E_1'' + E_2'' + L_1 + L_2 + C_1 + C_2\bigg) \\
&\qquad \qquad \simeq W \setminus \Supp (\hat{Q}+\hat{L}_1+\hat{L}_2+\hat{C}_1+\hat{C}_2),
\end{align*}
which implies that $S$ is cylindrical (see Example \ref{ex:(4)}). 
Therefore, this proves \((4)\).

\smallskip

Finally, we consider the case $\ell _S = m+4$. 
Then $Z$ is a smooth del Pezzo surface of degree $8$. 
In other words, $Z$ is a \(\Bbbk\)-form of $\F _1$ or $\P ^1 \times \P ^1$, where $\F_1$ is the Hirzebruch surface of degree $1$. Assume that $\ell_S = m+4$ and $m=2u-1$. We put $\bar{Q}\coloneqq \tau_{\ast}(Q)$, $\sum_{i=1}^{2n+3}\bar{E}_i\coloneqq \tau_{\ast}\left(\sum_{i=1}^{2u+3}E_i\right)$ and $\msp_i \coloneqq  \tau(E_i)$ for $i=1,\dots,2u+3$. 

Assume that $Z$ is a \(\Bbbk\)-form of $\F_1$. 
Let $\bar{E}_0$ be the minimal section of $Z_{\bar{\Bbbk}} \simeq \F_1$. Note that $\bar{E}_0$ is defined over \(\Bbbk\). Moreover, $\bar{E}_0 \simeq \P ^1_{\Bbbk}$. Hence, the general fiber $\bar{F}$ of the ruling $Z_{\bar{\Bbbk}} \simeq \F _1 \to \P ^1_{\bar{\Bbbk}}$ is defined over \(\Bbbk\). 
For $i=1,\dots , 2u+3$, let $\bar{E}_i''$ be an irreducible curve, which is linearly equivalent to $\bar{E}_0 + (2u+1)\bar{F}$ and passes through the points $\msp_1,\dots ,\msp_{i-1},\msp_{i+1},\dots ,\msp_{2u+3}$, on $Z_{\bar{\Bbbk}}$, and set $E_i'' \coloneqq \tau ^{-1}_{\bar{\Bbbk},\ast}(\bar{E_i}'')$. By construction, $E_i''$ is a $(-1)$-curve on $Y_{\bar{\Bbbk}}$ such that ${\rm Gal}(\bar{\Bbbk}/\Bbbk) \cdot \bigcup _{j=1}^{2u+3}E_j'' = \bigcup _{j=1}^{2u+3}E_j''$. Moreover, we have the following intersection numbers:
\begin{align*}
E_i'' \cdot Q = E_i'' \cdot E_j = 0, \text{ and } E_i'' \cdot E_j' = \delta _{i,j} \text{ for all } i,j. 
\end{align*}
Hence, $\sum_{i=1}^{2u+3}E_i''$ is defined over \(\Bbbk\) and can be further smoothly contracted over \(\Bbbk\). Let $\sigma \colon Y \to W$ be the contraction of $\sum_{i=1}^{2u+3}E_i''$, defined over \(\Bbbk\). Then $W$ is a \(\Bbbk\)-form of the Hirzebruch surface $\F _{2u-1}$ of degree $2u-1$. Moreover, $\hat{Q}\coloneqq \sigma _{\ast}(Q)$ is the minimal section and $\sigma _{\bar{\Bbbk},\ast}(E_i')$ is a closed fiber of the ruling $W_{\bar{\Bbbk}} \simeq \F _{2u-1} \to \P ^1_{\bar{\Bbbk}}$ for $i=1,\dots ,2u+3$. 
Then we see that there exists a cylinder of $S$ contained in 
\begin{align*}
S \setminus \Supp \Bigg(\pi _{\ast}\bigg(\sum_{i=1}^{2u+3}(E_i'+E_i'')\bigg)\Bigg) &\simeq Y \setminus \Supp \bigg(Q+\sum_{i=1}^{2u+3}(E_i'+E_i'')\bigg) \\&\simeq W \setminus \Supp \bigg(\hat{Q} + \sum_{i=1}^{2u+3}\hat{E}_i'\bigg),
\end{align*}
where $\sum_{i=1}^{2u+3}\hat{E}_i' \coloneqq \sigma_{\ast}\left(\sum_{i=1}^{2u+3}E_i'\right)$ (see Example \ref{ex:(0)}). 
Therefore, $S$ is cylindrical, and $S$ is rational by Lemma \ref{lem:Fm}. 

In what follows, we may assume that $Z$ is a \(\Bbbk\)-form of $\P ^1 \times \P ^1$. We may assume that $\bar{Q}_{\bar{\Bbbk}}$ is of type $(1,2)$ and each $\bar{E}_i'$ is of type $(1,0)$. For $i=1,\dots,2u+3$, let $\bar{E}_i''$ be an irreducible curve on $Z_{\bar{\Bbbk}} \simeq \P^1 \times\P^1$ of type $(0,1)$ passing through $\msp_i$. Note that the union $\sum_{i=1}^{2u+3}\bar{E}_i''$ is defined over $\Bbbk$. Moreover, we know that $\sum_{i=1}^{2u+3}E_i''\coloneqq \tau_{\ast}^{-1}\left(\sum_{i=1}^{2u+3}\bar{E}_i''\right)$ is a disjoint union and is defined over $\Bbbk$. Hence, we obtain the contraction $\sigma\colon Y \to W$ of $\sum_{i=1}^{2u+3}E_i''$, defined over $\Bbbk$. 
Then $W$ is a \(\Bbbk\)-form of the Hirzebruch surface $\F _{2u-1}$ of degree $2u-1$. Moreover, $\hat{Q}\coloneqq \sigma _{\ast}(Q)$ is the minimal section and $\sigma _{\bar{\Bbbk},\ast}(E_i')$ is a closed fiber of the ruling $W_{\bar{\Bbbk}} \simeq \F _{2u-1} \to \P ^1_{\bar{\Bbbk}}$ for $i=1,\dots ,2u+3$. 
Then we see that there exists a cylinder of $S$ contained in 
\begin{align*}
S \setminus \Supp \Bigg(\pi _{\ast}\bigg(\sum_{i=1}^{2u+3}(E_i'+E_i'')\bigg)\Bigg) \simeq W \setminus \Supp \bigg(\hat{Q} + \sum_{i=1}^{2u+3}\hat{E}_i'\bigg),
\end{align*}
where $\sum_{i=1}^{2u+3}\hat{E}_i' \coloneqq \sigma_{\ast}\left(\sum_{i=1}^{2u+3}E_i'\right)$ (see Example \ref{ex:(0)}). 
Therefore, $S$ is cylindrical, and rational by Lemma \ref{lem:odd points} combined with $Q(\Bbbk) \neq\emptyset$. 
\end{proof}

\subsection{Case: $n=m+5$}
In this subsection, we deal with the case when \(S\) is a \(\Bbbk\)-form of \(S_{m}^{m+5}\).

\begin{theorem}\label{thm:m+5}
Let $S$ be a $\Bbbk$-form of $S_m^{m+5}$, $\pi\colon Y \to S$ the minimal resolution, and $Q$ the exceptional curve. 
Then
\begin{enumerate}[(1)]
\item If $\ell _S \le m+1$, then $S$ is neither cylindrical nor rational. 
\item If $\ell _S > m+1$, then $\ell _S$ is one of $m+2$, $m+3$, $m+5$, $m+6$. 
\item If $\ell _S = m+2$ or $\ell_S = m+6$, then $S$ is both cylindrical and rational. 
\item If $\ell _S = m+3$ and $Q(\Bbbk) \neq\emptyset$, then $S$ is both cylindrical and rational. 
\item If $\ell _S = m+5$, then $S$ is rational. Moreover, if $Q(\Bbbk) \neq \emptyset$, then $S$ is cylindrical. 
\end{enumerate}
\end{theorem}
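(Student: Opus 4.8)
The plan is to follow the same template that proved Theorems \ref{thm:intermediate} and \ref{thm:m+4}: construct the $\P^1$-fibration $\Phi\colon Y_{\overline{\Bbbk}}\to\P^1_{\overline{\Bbbk}}$ from the fibers $F_i=E_i+E_i'$ (now there are $m+5$ of them since $S$ is a $\Bbbk$-form of $S_m^{m+5}$), observe it descends to $\varphi\colon Y\to B$ over $\Bbbk$, contract the Galois-invariant set of $\ell_S$ disjoint $(-1)$-curves meeting $Q$ to obtain $\tau\colon Y\to Z$ with a ruling $\psi\colon Z\to B$, and analyze the degree $(-K_Z)^2=-m+5+\ell_S$ of the resulting surface $Z$ as $\ell_S$ varies. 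First I would prove the exact analogue of Lemma \ref{lem:not rational}: when $\ell_S\le m+1$, the divisor $\overline{\Delta}=-rK_Z-2\overline{E}$ associated to a hypothetical smoothly contractible curve has $\overline{\Delta}^2=-(m-\ell_S)r^2-4r<0$ and $\overline{\Delta}\cdot\overline{F}=0$, so the Riemann--Roch plus geometric rationality argument forces a $\Bbbk$-rational fiber component, contradicting minimality; hence $Z$ is $\Bbbk$-minimal of Picard rank $2$ with $(-K_Z)^2=4+(\ell_S-m-1)\le 4$, and Theorems \ref{thm:Saw cyl iff} and \ref{thm:Saw cyl bir} give part $(1)$.

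For part $(2)$ I would argue exactly as in Theorem \ref{thm:m+4}(2): once $\ell_S>m+1$ the surface $Z$ is a smooth del Pezzo surface of degree $\ge 5$ with $\rho_{\Bbbk}(Z)=2$, so $(-K_Z)^2\in\{5,6,7,8\}$, giving $m+2\le\ell_S\le m+5$ a priori; the values $m+4$ and $m+5$ need separate scrutiny. The key is to rule out the configurations that would split a Galois orbit. As in the degree-$7$ exclusion for $S_m^{m+4}$, a degree-$7$ case here (which would be $\ell_S=m+4$) must be eliminated by the uniqueness of the extra $(-1)$-curve $\overline{L}$ meeting one singular fiber, forcing the two components of that fiber into different Galois orbits — a contradiction. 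The surviving possibilities are $\ell_S\in\{m+2,m+3,m+5,m+6\}$; note the extra value $m+6$ compared to the $S_m^{m+4}$ case arises because for $n=m+5$ there is one more blown-up point, so when $Z$ is a $\Bbbk$-form of $\P^1\times\P^1$ (degree $8$) the full Galois-invariant family of disjoint $(-1)$-curves meeting $Q$ can reach cardinality $m+6$.

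For the constructions in parts $(3)$--$(5)$ I would treat each surviving degree of $Z$ in turn, building the cylinder by the now-standard pattern of contracting $\tau$ to $Z$, then contracting the $(-1)$-curves of $Z$ that are not $\psi$-fiber components down to a recognizable surface $W$, and invoking the matching Example. When $\ell_S=m+2$, $Z$ has degree $6$, contracting the two sections yields $W$ a $\Bbbk$-form of $\P^1\times\P^1$; when $\ell_S=m+3$ with $Q(\Bbbk)\neq\emptyset$, $Z$ has degree $7$ — here one uses the $\Bbbk$-rational point on $Q$ together with Lemma \ref{lem:p1p1} or Lemma \ref{lem:line conic} to descend $W$ to $\P^2_{\Bbbk}$ and then applies Example \ref{ex:(2)}; when $\ell_S=m+6$, $Z$ is a $\Bbbk$-form of $\P^1\times\P^1$ and cylindricity follows directly as in the end of Theorem \ref{thm:m+4}. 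The genuinely delicate case is $\ell_S=m+5$, where $Z$ is a $\Bbbk$-form of $\F_1$ (degree $8$): rationality follows because the minimal section descends and Ch\^atelet/Lemma \ref{lem:Fm} identify the base, but cylindricity requires the extra hypothesis $Q(\Bbbk)\neq\emptyset$ to produce a $\Bbbk$-rational fiber and run the $\F_m$ cylinder construction of Example \ref{ex:(0)}; without the rational point one only gets rationality, which is why the statement of $(5)$ is weaker.

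I expect the main obstacle to be the enumeration and exclusion step of part $(2)$: correctly identifying which degrees of $Z$ actually occur as $\ell_S$ varies, and in particular ruling out the intermediate degree-$7$ value by a careful Galois-orbit analysis of the finitely many $(-1)$-curves on $Z_{\overline{\Bbbk}}$, is the step where the combinatorics of the $m+5$ singular fibers (as opposed to $m+4$) genuinely changes the bookkeeping and must be checked with care.
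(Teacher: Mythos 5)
Your proposal transplants the fibration framework of Theorem \ref{thm:m+4} to the case $n=m+5$, but the paper's proof is built on a genuinely different structural fact, and the transplant breaks down at several points. The paper's key observation (Lemma \ref{lem(1)}) is that $Y_{\overline{\Bbbk}}$ carries a distinguished $(-1)$-curve $E_0$ with $E_0\cdot Q=2$ and $-K_{Y_{\overline{\Bbbk}}}\sim Q+E_0$, and that every other $(-1)$-curve meets exactly one of $Q$, $E_0$, each once; hence $E_0$ is Galois-invariant and any maximal family realizing $\ell_S$ may be taken to contain it. Contracting all $\ell_S$ curves (including $E_0$) gives a surface $Z$ that is $\Bbbk$-minimal \emph{for free} -- no analogue of Lemma \ref{lem:not rational} is needed -- while contracting only the other $\ell_S-1$ curves gives an auxiliary surface $W$ carrying $\overline{E}_0$, on which the cylinders are built. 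Your route misses $E_0$ entirely, and this is not cosmetic: it is precisely what makes the $n=m+5$ analysis different from $n=m+4$.

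Concretely, three steps of your argument fail. First, the descent of your fibration $\Phi$ to $\Bbbk$ is unjustified: in the $m+4$ case the union $\sum F_i$ is Galois-invariant because the components $E_i,E_i'$ exhaust the $(-1)$-curves meeting $Q$, but for $n=m+5$ the curves meeting $Q$ also include $E_0$ (meeting it twice) and the $\binom{m+5}{2}$ sections in $|M+(m+1)F-(E_{i_1}+\dots+E_{i_{m+3}})|$, so the Galois group need not preserve the chosen ruling. Second, your framework forces $\rho_{\Bbbk}(Z)=2$ via the ruling $\psi$, hence $(-K_Z)^2\le 8$; with the correct degree formula $(-K_Z)^2=\ell_S-m+3$ (your first formula $-m+5+\ell_S$ is off by $2$, and the degrees you assign in parts $(3)$--$(5)$ are off by $1$) this caps $\ell_S$ at $m+5$ and cannot produce the case $\ell_S=m+6$, which in the paper corresponds to $Z$ being a $\Bbbk$-minimal form of $\P^2$ of degree $9$; your attempted patch (a degree-$8$ surface carrying $m+6$ such curves) contradicts your own degree count. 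Third, for $\ell_S=m+5$ the surface $Z$ is $\Bbbk$-minimal of degree $8$ and is therefore a $\Bbbk$-form of $\P^1\times\P^1$, not of $\F_1$ as you assert (a form of $\F_1$ is never $\Bbbk$-minimal); rationality in the paper comes from contracting two $(-1)$-curves of the degree-$7$ surface $W$ down to a form of $\P^2$ containing a conic, and the hypothesis $Q(\Bbbk)\neq\emptyset$ is then used to run the tangent-conic construction of Example \ref{ex:(2)}, not the $\F_m$ construction of Example \ref{ex:(0)}. The exclusion of $\ell_S=m+4$ is also much simpler than your Galois-orbit analysis: a del Pezzo surface of degree $7$ is never $\Bbbk$-minimal (Lemma \ref{lem(5)}).
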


Before we present the proof of Theorem \ref{thm:m+5}, we note \(Q(\Bbbk)\neq \emptyset\) when \(m=3\). Hence, we have the following corollary.

\begin{corollary}\label{cor:m+5}
Let $S$ be a $\Bbbk$-form of $S_m^{m+5}$, $\pi \colon Y \to S$ the minimal resolution, and $Q$ the exceptional curve. 
Assume that $Q$ has a $\Bbbk$-rational point. 
Then
    \begin{align*}
    \text{$S$ is cylindrical} \iff \text{$S$ is rational} \iff \ell_{S} \ge 5. 
    \end{align*}
\end{corollary}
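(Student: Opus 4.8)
The plan is to read the corollary off Theorem \ref{thm:m+5} by specializing to the relevant modulus and tabulating the finitely many admissible values of $\ell_S$. The numerical threshold $\ell_S \ge 5$ arises as $m+2$ with $m = 3$, which is precisely the case flagged in the remark preceding the statement: here $Q$ is a $\Bbbk$-form of $\P^1$ of odd self-intersection, so it automatically carries a $\Bbbk$-rational point by Lemma \ref{lem:Fm}, and the hypothesis $Q(\Bbbk) \neq \emptyset$ is available throughout. With $m = 3$ we record the concrete values $m+1 = 4$, $m+2 = 5$, $m+3 = 6$, $m+5 = 8$, and $m+6 = 9$.

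First I would invoke Theorem \ref{thm:m+5}(2): whenever $\ell_S > m+1$, the invariant $\ell_S$ equals one of $m+2$, $m+3$, $m+5$, $m+6$, and no intermediate value occurs. It is this discreteness that lets the whole equivalence pivot on the single inequality $\ell_S \ge m+2 = 5$.

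For the implication that $\ell_S \ge 5$ forces $S$ to be both cylindrical and rational, I would run through these four values in turn. When $\ell_S = 5 = m+2$ or $\ell_S = 9 = m+6$, Theorem \ref{thm:m+5}(3) yields both properties directly. When $\ell_S = 6 = m+3$, Theorem \ref{thm:m+5}(4) applies because $Q(\Bbbk) \neq \emptyset$. When $\ell_S = 8 = m+5$, Theorem \ref{thm:m+5}(5) gives rationality unconditionally and cylindricity from $Q(\Bbbk) \neq \emptyset$. Conversely, if $\ell_S \le 4 = m+1$, then Theorem \ref{thm:m+5}(1) shows that $S$ is neither cylindrical nor rational. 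Chaining these two implications establishes that the three conditions—cylindricity of $S$, rationality of $S$, and $\ell_S \ge 5$—are pairwise equivalent, which is the assertion of the corollary.

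I do not expect a genuine obstacle: the entire content resides in Theorem \ref{thm:m+5}, and the corollary reduces to a finite case-check. The only point deserving a moment's attention is confirming that the hypothesis $Q(\Bbbk) \neq \emptyset$ is genuinely in force in the boundary cases $\ell_S = 6$ and $\ell_S = 8$, where parts (4) and (5) of Theorem \ref{thm:m+5} require it; this is guaranteed here, being automatic for $m = 3$ by the oddness of $m$.
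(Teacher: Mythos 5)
Your proof is correct and matches the paper's (implicit) argument exactly: the corollary is simply Theorem \ref{thm:m+5} specialized via the case list $\ell_S\in\{m+2,m+3,m+5,m+6\}$, with the hypothesis $Q(\Bbbk)\neq\emptyset$ feeding parts (4) and (5), and part (1) supplying the converse. You also correctly resolved the reading of the threshold ``$\ell_S\ge 5$'' as $m+2$ in the case $m=3$ flagged in the remark preceding the corollary, where the rational point on $Q$ is automatic.
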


To prove Theorem \ref{thm:m+5}, we first establish the following lemmas.

\begin{lemma}\label{lem(1)}
The following assertions hold: 
\begin{enumerate}
\item There exists a $(-1)$-curve $E_0$ on $Y_{\bar{\Bbbk}}$ such that $E_0 \cdot Q = 2$. 
\item $-K_{Y_{\bar{\Bbbk}}} \sim Q+E_0$. 
\item For every $(-1)$-curve $E\neq E_0$ on $Y_{\bar{\Bbbk}}$, the pair $(E \cdot E_0,E \cdot Q)$ equals $(1,0)$ or $(0,1)$. 
\end{enumerate}
In particular, $E_0$ is defined over $\Bbbk$. Hence, $\ell_S \ge 1$. 
\end{lemma}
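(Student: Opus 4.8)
The plan is to realize the required curve $E_0$ as the strict transform of a suitable section on the Hirzebruch surface, and then to deduce everything else from the single linear equivalence $-K_{Y_{\bar{\Bbbk}}}\sim Q+E_0$ together with adjunction. Recall that $Y_{\bar{\Bbbk}}\simeq \tilde{S}_m^{m+5}$ is the blow-up of $\F_m$ at $m+5$ general points, so that $\Pic(Y_{\bar{\Bbbk}})$ is generated by $Q$, the fiber class $F$, and the exceptional curves $E_1,\dots,E_{m+5}$, and $-K_{Y_{\bar{\Bbbk}}}\sim 2Q+(m+2)F-\sum_{i=1}^{m+5}E_i$. I would set $E_0\coloneqq Q+(m+2)F-\sum_{i=1}^{m+5}E_i$, so that by construction $-K_{Y_{\bar{\Bbbk}}}\sim Q+E_0$, which is assertion (2). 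A direct computation using $Q^2=-m$, $Q\cdot F=1$, $E_i\cdot Q=0$, and the adjunction relation $K_{Y_{\bar{\Bbbk}}}\cdot Q=m-2$ then gives $E_0^2=-1$, $E_0\cdot(-K_{Y_{\bar{\Bbbk}}})=1$ and $E_0\cdot Q=2$; these are exactly the numerical properties demanded in (1).

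The main work is to show that the class $E_0$ is represented by an \emph{irreducible} $(-1)$-curve, not merely by an effective divisor. For this I would descend to $\F_m$ and study the linear system $|Q+(m+2)F|$. Since $m+2\ge m+1$, this system is very ample, and a projection computation gives $\dim|Q+(m+2)F|=m+5$ with smooth rational general member (arithmetic genus $0$, as $(Q+(m+2)F)\cdot(Q+(m+2)F+K_{\F_m})=-2$). Imposing passage through the $m+5$ general points $p_1,\dots,p_{m+5}$ cuts the dimension down to $0$, and a Bertini-type argument---first producing an irreducible general member of the pencil obtained after imposing $m+4$ of the points, then specialising the last general point---shows that the unique member $D$ through all $m+5$ points is an irreducible smooth rational curve meeting $Q$ at the two points of $D\cap Q$, all distinct from the $p_i$. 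Its strict transform is then the desired irreducible $(-1)$-curve $E_0$ with $E_0\cdot Q=2$, proving (1). I expect this irreducibility step to be the main obstacle: the numerics alone do not exclude a decomposition $E_0=(\text{mobile part})+(\text{contracted components})$, and because $-K_{Y_{\bar{\Bbbk}}}$ is \emph{not} nef for $m\ge 3$ (indeed $-K_{Y_{\bar{\Bbbk}}}\cdot Q=2-m<0$), the usual weak del Pezzo argument is unavailable, so one must argue through general position on $\F_m$.

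Assertion (3) is then immediate from (2). Given a $(-1)$-curve $E\ne E_0$, note $E\ne Q$ because $Q^2=-m\ne-1$, so $E\cdot Q\ge 0$ and $E\cdot E_0\ge 0$ as intersection numbers of distinct irreducible curves. Adjunction gives $1=E\cdot(-K_{Y_{\bar{\Bbbk}}})=E\cdot Q+E\cdot E_0$, whence $(E\cdot E_0,E\cdot Q)$ equals $(1,0)$ or $(0,1)$.

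Finally, for the descent and the bound $\ell_S\ge 1$, I would first record uniqueness: if $E$ is a $(-1)$-curve with $E\cdot Q=2$ and $E\ne E_0$, then $E\cdot E_0=1-2=-1<0$, contradicting the non-negativity above, so $E_0$ is the unique $(-1)$-curve meeting $Q$ with multiplicity $2$. Since $Q$ and $-K_{Y_{\bar{\Bbbk}}}$ are defined over $\Bbbk$, the class $E_0=-K_{Y_{\bar{\Bbbk}}}-Q$ is $\gal(\bar{\Bbbk}/\Bbbk)$-invariant; the Galois action permutes $(-1)$-curves and fixes $Q$, hence preserves the property ``$(-1)$-curve with $\cdot\,Q=2$'', and by uniqueness it must fix $E_0$ itself, so $E_0$ is defined over $\Bbbk$. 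Then $\Sigma=\{E_0\}$ is a Galois-invariant singleton of disjoint $(-1)$-curves meeting $Q$, i.e. $\Sigma\in\mathcal{E}$, which gives $\ell_S\ge 1$.
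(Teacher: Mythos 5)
Your proposal is correct and follows essentially the same route as the paper: realize $E_0$ as the strict transform of an irreducible member of $|M+(m+2)F|$ on $\F_m$ through the $m+5$ general points, verify the numerics, and deduce (3) from $1=E\cdot(-K_{Y_{\bar{\Bbbk}}})=E\cdot Q+E\cdot E_0$ with both terms non-negative. Incidentally, your class $M+(m+2)F$ is the right one (the paper's $M+(m+4)F$ is a typo, as that class cannot yield a $(-1)$-curve meeting $Q$ twice after blowing up the $m+5$ points), and your explicit uniqueness-plus-Galois-invariance argument for the descent of $E_0$ fills in a step the paper leaves implicit.
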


\begin{proof}
For (1) and (2), recall that, by the definition of $S_m^{m+5}$, $Y_{\bar{\Bbbk}}$ can be obtained by blowing up the Hirzebruch surface \(\F_m\) of degree $m$ at $(m+5)$ general points $\msp_1,\dots,\msp_{m+5}$. Let $M$ and $F$ be the minimal section and fiber component of $\F_m$, respectively. Then there exists an irreducible curve $B_0$ on $\F_m$ such that $B_0 \sim M + (m+2)F$ and $\msp_1,\dots,\msp_{m+5} \in B_0$. Let $E_0$ be the strict transform of $B_0$ on $Y_{\bar{\Bbbk}}$. Then $E_0$ is a $(-1)$-curve and satisfies $E_0 \cdot Q = 2$ since $Q$ is the strict transform of $M$. Moreover, by the construction of $E_0$, we know $-K_{Y_{\bar{\Bbbk}}} \sim Q+E_0$. 

For (3), let $E$ be a $(-1)$-curve on $Y_{\bar{\Bbbk}}$ such that $E \not= E_0$. Then we have $1 = E \cdot (-K_{Y_{\bar{\Bbbk}}})=E\cdot Q + E\cdot E_0$. Here, we note $E \cdot Q \ge 0$ and $E \cdot E_0 \ge 0$. Hence, we obtain the last assertion. 
\end{proof}

By the definition of $\ell_S$, there exist $(-1)$-curves $E_i$ for $i\in\{1,\ldots, \ell_S-1\}$ such that the union $E \coloneqq  E_1+\dots +E_{\ell_S-1}$ can be contracted over $\Bbbk$. 
Let $\sigma\colon Y \to Z$ be the contraction of $E_0+E$, defined  over $\Bbbk$, and $\tau\colon Y \to W$ the contraction of $E$, defined over $\Bbbk$. Set $\bar{E}_0 \coloneqq  \tau_{\ast}(E_0)$. 
By Lemma \ref{lem(1)}, there is no contractible curve over $\Bbbk$ on $Z$. Moreover, we have $(\sigma _{\ast}(Q))^2>0$. Hence, $Z$ is a $\Bbbk$-minimal smooth del Pezzo surface. 
On the other hand, note that $(-K_Z)^2 = (-K_Y)^2 + \ell_S = \ell_S - m + 3$. 

\begin{align*}
\xymatrix{
&&&W \ar[drr]^{{\rm cont}_{\bar{E}_0}}&& \\
& Y \ar[dl]_{\pi} \ar[urr]^{\tau} \ar[rrrr]^{\sigma} &&&& Z \\
S &&&&&
}
\end{align*}

\begin{lemma}\label{lem(2)}
If $\ell_S \le m+1$, then $S$ is neither cylindrical nor rational. 
\end{lemma}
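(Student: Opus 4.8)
The plan is to reduce the statement to the minimality criterion Theorem~\ref{thm:Saw cyl iff}, applied to the smooth del Pezzo surface $Z$ obtained from $Y$ by the contraction $\sigma\colon Y\to Z$ of $E_0+E$ constructed just above. The discussion preceding the lemma already establishes that $Z$ is a $\Bbbk$-minimal smooth del Pezzo surface with $(-K_Z)^2 = \ell_S - m + 3$; moreover $Z$ is geometrically rational, since $Z_{\overline{\Bbbk}}$ is obtained from the rational surface $Y_{\overline{\Bbbk}}$ by contracting finitely many $(-1)$-curves.

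First I would feed the hypothesis $\ell_S \le m+1$ into the degree formula to obtain $(-K_Z)^2 = \ell_S - m + 3 \le 4$. With $Z$ now known to be smooth, geometrically rational, $\Bbbk$-minimal, and of anticanonical degree at most $4$, the hypotheses of Theorem~\ref{thm:Saw cyl iff} are met verbatim, and that theorem yields at once that $Z$ is neither rational nor cylindrical.

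Next I would transfer these two negative properties back to $S$ along the birational chain $S \xleftarrow{\pi} Y \xrightarrow{\sigma} Z$, both arrows being birational morphisms defined over $\Bbbk$. For rationality this is immediate, since rationality over $\Bbbk$ is a birational invariant: the non-rationality of $Z$ forces $Y$, and hence $S$, to be non-rational. For cylindricity I would argue contrapositively. If $S$ were cylindrical, then after discarding the finitely many singular points of the base we may assume the cylinder has smooth base; it is then smooth and therefore avoids the unique singular point $\pi(Q)$ of $S$. As $\pi$ is an isomorphism over the smooth locus, such a cylinder lifts isomorphically to a cylinder in $Y$, so $Y$ would be cylindrical. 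But $Y$ and $Z$ are birationally equivalent smooth projective surfaces, so Theorem~\ref{thm:Saw cyl bir} combined with the non-cylindricity of $Z$ shows $Y$ is not cylindrical, a contradiction. Hence $S$ is neither rational nor cylindrical.

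The argument is essentially formal once $Z$ is identified, mirroring the proof of Theorem~\ref{thm:m+4}(1). The only point demanding a little care is the passage from $Y$ to the singular surface $S$, where one must ensure the cylinder in $S$ can be taken inside the smooth locus so that it lifts through the resolution $\pi$. I do not anticipate a serious obstacle here, since all the genuinely geometric work---producing $E_0$ via Lemma~\ref{lem(1)}, verifying the $\Bbbk$-minimality of $Z$, and computing $(-K_Z)^2$---has already been completed in the construction preceding the lemma.
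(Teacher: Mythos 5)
Your proposal is correct and follows essentially the same route as the paper: reduce to the $\Bbbk$-minimal smooth del Pezzo surface $Z$ of degree $\ell_S-m+3\le 4$, apply Theorem~\ref{thm:Saw cyl iff} to rule out rationality and cylindricity of $Z$, and transfer back to $S$ via birational invariance and Theorem~\ref{thm:Saw cyl bir}. The extra care you take in lifting a cylinder from $S$ to $Y$ through the resolution is a detail the paper leaves implicit, but it does not change the argument.
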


\begin{proof}
By assumption, we have $(-K_Z)^2 \le 4$. Hence, $Z$ is a $\Bbbk$-minimal smooth del Pezzo surface of degree $\le 4$. 
By Theorem \ref{thm:Saw cyl iff}, $Z$ is neither cylindrical nor rational. In particular, $S$ is irrational. 
Moreover, since $Y$ is not cylindrical by Theorem \ref{thm:Saw cyl bir}, it follows that $S$ is not cylindrical.
\end{proof}

In what follows, we deal with the case $\ell_S \ge m+2$. Note that  $\bar{Q} \coloneqq  \tau_{\ast}(Q)$ has the self intersection number $(-K_Z)^2-4$. 

\begin{lemma}\label{lem(3)}
If $\ell_S = m+2$, then $S$ is cylindrical and rational. 
\end{lemma}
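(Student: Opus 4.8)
The plan is to exploit that, for $\ell_S = m+2$, the contraction $\sigma\colon Y \to Z$ lands in a del Pezzo surface of degree $5$, and to read off both conclusions from its geometry. Indeed $(-K_Z)^2 = \ell_S - m + 3 = 5$, so by the discussion preceding the lemma $Z$ is a $\Bbbk$-minimal smooth del Pezzo surface of degree $5$. The first step is to describe the image of $Q$. Since $-K_Y \sim Q + E_0$ by Lemma \ref{lem(1)} and $\sigma$ contracts $E_0$, the curve $Q_Z \coloneqq \sigma_*(Q)$ is anticanonical, $Q_Z \sim -K_Z$. As $E_0 \cdot Q = 2$, contracting $E_0$ turns $Q_Z$ into a rational anticanonical curve with a single singular point $\msp_0 \coloneqq \sigma(E_0)$, and since $E_0$ is defined over $\Bbbk$ (Lemma \ref{lem(1)}), the point $\msp_0$ is $\Bbbk$-rational; in particular $Z(\Bbbk) \neq \emptyset$. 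This already explains why, unlike the neighbouring cases, no hypothesis on $Q(\Bbbk)$ is imposed: the node $\msp_0$ is an automatic $\Bbbk$-point.

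Rationality then follows at once: $Z$ is a del Pezzo surface of degree $5$ possessing the $\Bbbk$-rational point $\msp_0$, hence is rational. Since $Z$ is birational to $Y$, and thus to $S$, the surface $S$ is rational.

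For cylindricity the goal is a cylinder contained in $Z \setminus \Supp(Q_Z)$. This is enough: because $Q_Z = \sigma(Q)$ and $Q_Z$ also passes through the images $\sigma(E_i)$ of the remaining contracted curves, such a cylinder avoids every $\sigma$-exceptional point, lifts through $\sigma$ to a cylinder on $Y$ disjoint from $Q$, and therefore descends to a cylinder on $S$ avoiding the singular point $\pi(Q)$. To construct it I would pass to the degree-$4$ surface $W$ introduced before the lemma, which is precisely the blow-up of $Z$ at $\msp_0$, with exceptional curve $\overline{E}_0 = \tau_*(E_0)$ and strict transform $\overline{Q} = \tau_*(Q)$ satisfying $\overline{Q} + \overline{E}_0 \sim -K_W$, $\overline{Q}^2 = 1$ and $\overline{E}_0 \cdot \overline{Q} = 2$. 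Here $\rho_\Bbbk(W) = \rho_\Bbbk(Z) + 1 = 2$, so besides the blow-down of $\overline{E}_0$ the surface $W$ admits a second extremal contraction over $\Bbbk$. Running it, I would reduce $W$ by a Galois-equivariant sequence of contractions of disjoint $(-1)$-curves either to a conic bundle over a $\Bbbk$-form of $\P^1$ or to one of $\P^2_\Bbbk$, a $\Bbbk$-form of $\P^1 \times \P^1$, or $\F_1$. On whichever model is reached, the images of $\overline{Q}$ and $\overline{E}_0$ form a boundary configuration of the type treated in Examples \ref{ex:(0)}--\ref{ex:(4)}, with the nodal-curve picture of Example \ref{ex:(3)} as the prototype, which yields the desired cylinder in the complement of $Q_Z$.

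The crux, and the main obstacle, is this last reduction. I must realise the intermediate contractions together with the auxiliary sections and fibres over $\Bbbk$, and then verify that the $\A^1$-fibration they produce is disjoint from $Q_Z$ \emph{in its entirety}, so that the cylinder genuinely avoids the singular point of $S$ and descends. This is a Galois-equivariant sharpening of the known existence of anticanonical polar cylinders on degree-$5$ del Pezzo surfaces, and carrying it out demands a careful analysis of how $\overline{Q}$, $\overline{E}_0$, and the reducible fibres sit inside the $\rho_\Bbbk = 2$ geometry of $W$.
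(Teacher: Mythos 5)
Your rationality argument is fine in outline but leans on an external fact: you assert that a degree-$5$ del Pezzo surface with a $\Bbbk$-rational point is rational, which is a classical theorem not established anywhere in this paper. The paper avoids invoking it by arguing concretely: on the degree-$4$ surface $W$ the relation $-K_W \sim \overline{Q}+\overline{E}_0$ forces every $(-1)$-curve $\overline{E}'\neq\overline{E}_0$ to satisfy $\overline{E}'\cdot\overline{Q}+\overline{E}'\cdot\overline{E}_0=1$, so the five $(-1)$-curves meeting $\overline{E}_0$ are disjoint from $\overline{Q}$, pairwise disjoint, and Galois-invariant; contracting them gives $\psi\colon W\to V$ with $V$ a $\Bbbk$-form of $\P^2$ in which $\hat{Q}=\psi_{\ast}(\overline{Q})$ is a line, whence $V\simeq\P^2_{\Bbbk}$ by Lemma \ref{lem:line conic}. (Your auxiliary claim $\rho_{\Bbbk}(W)=\rho_{\Bbbk}(Z)+1=2$ also presupposes $\rho_{\Bbbk}(Z)=1$, which does not follow merely from $\Bbbk$-minimality without citing Iskovskikh's classification.)

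The genuine gap is in cylindricity. What you offer is a plan, not a proof: you propose to run a Galois-equivariant MMP on $W$ down to one of several possible models and then "verify" that the images of $\overline{Q}$ and $\overline{E}_0$ form a boundary of the type in Examples \ref{ex:(0)}--\ref{ex:(4)}, and you yourself flag this reduction as the unresolved crux. Nothing in your write-up pins down which model is reached, which curves over $\Bbbk$ bound the cylinder, or why the resulting $\A^1$-fibration misses $Q_Z$. The paper closes exactly this gap with the contraction $\psi$ above: since $\hat{Q}\simeq\P^1_{\Bbbk}$ is a line in $V\simeq\P^2_{\Bbbk}$, one picks a general $\Bbbk$-rational point $\hat{\msp}\in\hat{Q}$ and the five lines $\hat{L}_i$ joining $\hat{\msp}$ to the points $\hat{\msp}_i=\psi(\overline{E}_i')$; the union $\hat{Q}+\sum_{j=1}^{5}\hat{L}_j$ is a Galois-invariant configuration of lines through $\hat{\msp}$, and Example \ref{ex:(1)} produces a cylinder in its complement, which pulls back to a cylinder on $S$ with boundary $\pi_{\ast}\bigl(\sum_i E_i+\sum_j(E_j'+L_j)\bigr)$. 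Without an explicit construction of this kind your argument does not establish cylindricity.
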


\begin{proof}
By assumption, $(-K_Z)^2 = 5$ and $W$ is a smooth del Pezzo surface of degree $4$. Then there exist exactly five $(-1)$-curves $\bar{E}_1',\dots ,\bar{E}_5'$ meeting $\bar{E}_0$. 
Furthermore, the union $\bar{E}_1'+\dots+\bar{E}_5'$ is a disjoint union. 
This yields a contraction $\psi\colon W \to V$ of $\bar{E}_1'+\dots +\bar{E}_5'$, defined over $\Bbbk$, where $V$ is a $\Bbbk$-form of $\P^2$. 
Note that $\hat{Q} \coloneqq \psi_{\ast}(\bar{Q})$ is a $1$-curve. By Lemma \ref{lem:line conic}, $V \simeq \P^2_{\Bbbk}$, so that $S$ is rational. 
Since $\hat{Q} \not= \emptyset$, we can take a $\Bbbk$-rational point $\hat{\msp}$ on $\hat{Q}$. 
For $i=1,\dots ,5$, set $\hat{\msp}_i \coloneqq  \psi(\bar{E}_i')$ on $V_{\bar{\Bbbk}}$ and let $\hat{L}_i$ be a line passing through two points $\hat{\msp}$ and $\hat{\msp}_i$.  
Note that the union $\hat{Q} + \sum _{j=1}^5\hat{L}_j$ is defined over $\Bbbk$ and consists only of lines on $V \simeq \P^2_{\Bbbk}$ passing through $\hat{\msp}$. 
Since $V \setminus \Supp\bigg(\hat{Q} + \sum _{j=1}^5\hat{L}_j \bigg)$ contains a cylinder of $V$ (see also Example \ref{ex:(1)}), we know that there exists a cylinder of $S$ contained in 
\begin{align*}
S \setminus \Supp\Bigg( \pi_{\ast}\bigg( \sum_{i=1}^{m+1}E_i + \sum_{j=1}^5(E_j'+L_j)\bigg)\Bigg) &\simeq Y \setminus \Supp \bigg(Q + \sum_{i=1}^{m+1}E_i +  \sum_{j=1}^5(E_j'+L_j)\bigg) \\
&\simeq V \setminus \Supp\bigg(\hat{Q} + \sum _{j=1}^5 \hat{L}_j \bigg),
\end{align*}
where $E_i' \coloneqq  \tau_{\ast}^{-1}(\bar{E}_i')$ and $L_i \coloneqq  (\psi \circ \tau)_{\ast}^{-1}(\hat{L}_i)$ for $i=1,\dots ,5$. 
Therefore, $S$ is rational and cylindrical. 
\end{proof}

\begin{lemma}\label{lem(4)}
If $\ell_S = m+3$ and $Q(\Bbbk) \not= \emptyset$, then $S$ is cylindrical and rational. 
\end{lemma}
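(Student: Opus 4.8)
The plan is to imitate the proof of Lemma~\ref{lem(3)}, descending to a del Pezzo surface of degree $8$ whose rationality is controlled by the hypothesis $Q(\Bbbk)\neq\emptyset$. Since $\ell_S=m+3$, we have $(-K_Z)^2=\ell_S-m+3=6$, so $W$ is a smooth del Pezzo surface of degree $(-K_W)^2=(-K_Z)^2-1=5$ and $\bar Q^2=(-K_Z)^2-4=2$. By Lemma~\ref{lem(1)}, $\bar E_0$ is a $(-1)$-curve with $-K_W\sim\bar Q+\bar E_0$, whence $\bar Q\cdot\bar E_0=2$. The key point is that, unlike the degree-$4$ situation of Lemma~\ref{lem(3)}, contracting the curves meeting $\bar E_0$ now lands on a form of $\P^1\times\P^1$ rather than $\P^2$, which is exactly where the assumption $Q(\Bbbk)\neq\emptyset$ enters, via Lemma~\ref{lem:p1p1}.

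First I would contract the $(-1)$-curves meeting $\bar E_0$. On a smooth del Pezzo surface of degree $5$ there are precisely three such curves $\bar G_1,\bar G_2,\bar G_3$, and they are pairwise disjoint, since the ten $(-1)$-curves form the Petersen graph, in which the neighbourhood of any vertex is an independent set. Combining $\bar G_i\cdot\bar E_0=1$ with $-K_W\sim\bar Q+\bar E_0$ gives $\bar G_i\cdot\bar Q=0$ for every $i$. As $\bar E_0$ is defined over $\Bbbk$, the set $\{\bar G_1,\bar G_2,\bar G_3\}$ is $\gal(\overline{\Bbbk}/\Bbbk)$-stable, hence contractible over $\Bbbk$; let $\sigma'\colon W\to V'$ be its contraction, so that $V'$ is a smooth del Pezzo surface of degree $8$.

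Next I would identify $V'$. Because $\bar G_i\cdot\bar Q=0$, the image $\hat Q\coloneqq\sigma'_\ast(\bar Q)$ is irreducible with $\hat Q^2=2$. Since the Hirzebruch surface $\F_1$ carries no irreducible curve of self-intersection $2$, the surface $V'$ is not a form of $\F_1$, hence it is a $\Bbbk$-form of $\P^1\times\P^1$ with $\hat Q$ of type $(1,1)$. By hypothesis $Q\simeq\P^1_{\Bbbk}$, so a general $\Bbbk$-rational point of $Q$ descends to a $\Bbbk$-rational point $\hat{\msp}\in\hat Q\subseteq V'$; thus $V'$, and therefore $S$, is rational by Lemma~\ref{lem:p1p1}.

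Finally I would construct the cylinder by means of Example~\ref{ex:(4)}. Let $L_1,L_2$ be the curves of types $(1,0),(0,1)$ through $\hat{\msp}$, which are defined over $\Bbbk$. Within the pencil of $(1,1)$-curves through $\hat{\msp}$ having the tangent direction of $\hat Q$ there, let $C_1,C_2,C_3$ be the members passing through the three contracted points $\sigma'(\bar G_1),\sigma'(\bar G_2),\sigma'(\bar G_3)$; this triple of points is $\gal(\overline{\Bbbk}/\Bbbk)$-stable, so $C_1+C_2+C_3$ is defined over $\Bbbk$, and $\hat Q,C_1,C_2,C_3$ meet one another only at $\hat{\msp}$, with multiplicity $2$. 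Example~\ref{ex:(4)} then yields a cylinder inside $V'\setminus\Supp(\hat Q+L_1+L_2+C_1+C_2+C_3)$. To transport it to $S$ I would check that the $m+5$ points blown up to recover $Y$ from $V'$ all lie on this boundary: the $m+2$ images of $E_1,\dots,E_{\ell_S-1}$ lie on $\hat Q$ since $E_i\cdot Q=1$, while $\sigma'(\bar G_1),\sigma'(\bar G_2),\sigma'(\bar G_3)$ lie on $C_1,C_2,C_3$ by construction. Writing $G_j,C_i,L_i$ also for the proper transforms on $Y$, this produces
\begin{align*}
&S\setminus\Supp\big(\pi_\ast(\textstyle\sum_{i}E_i+G_1+G_2+G_3+C_1+C_2+C_3+L_1+L_2)\big)\\
&\qquad\simeq V'\setminus\Supp(\hat Q+L_1+L_2+C_1+C_2+C_3),
\end{align*}
and hence a cylinder in $S$. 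The main obstacle is precisely this final descent: routing the auxiliary $(1,1)$-curves $C_i$ through the contracted points while preserving the common tangency at $\hat{\msp}$ that Example~\ref{ex:(4)} demands, so that every blown-up point falls into the support of the boundary and the cylinder lifts to $S$.
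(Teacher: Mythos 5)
Your proposal follows essentially the same route as the paper: contract the $m+2$ curves $E_i$ to reach the quintic del Pezzo surface $W$, then contract the three pairwise disjoint $(-1)$-curves meeting $\bar E_0$ (which are automatically Galois-stable and disjoint from $\bar Q$) to land on a $\Bbbk$-form $V$ of $\P^1\times\P^1$, get rationality from a $\Bbbk$-point of $\hat Q$ via Lemma~\ref{lem:p1p1}, and build the cylinder with Example~\ref{ex:(4)} using the $(1,0)$- and $(0,1)$-curves through $\hat{\msp}$ together with $(1,1)$-curves tangent to $\hat Q$ at $\hat{\msp}$ passing through the images of the contracted curves. Your parity argument ruling out $\F_1$ (no curve class of self-intersection $2$) is a nice explicit justification of a step the paper only asserts.

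The one place where you are less careful than the paper is the construction of the auxiliary curves $C_i$. The pencil of $(1,1)$-curves through $\hat{\msp}$ with the tangent direction of $\hat Q$ has exactly one reducible member, namely $\hat L_1+\hat L_2$, and that reducible member is the \emph{unique} member of the pencil through any point of $(\hat L_1\cup\hat L_2)\setminus\{\hat{\msp}\}$. So if some $\hat{\msp}_i$ lies on $\hat L_1\cup\hat L_2$, there is no irreducible $(1,1)$-curve $C_i$ of the kind you describe, and your boundary divisor as written does not exist. The paper handles this by splitting into two cases (all $\hat{\msp}_i$ on $\hat L_1\cup\hat L_2$, in which case the $C_i$ are simply omitted and $\hat Q+\hat L_1+\hat L_2$ already suffices, versus none of them on it, which is forced by Galois-invariance of $\{\hat{\msp}_1,\hat{\msp}_2,\hat{\msp}_3\}$ and of $\hat L_1+\hat L_2$). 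This is an easy patch rather than a fatal flaw, but your write-up should include the dichotomy.
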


\begin{proof}
By assumption, $(-K_Z)^2 = 6$ and $W$ is a smooth del Pezzo surface of degree $5$. Then there exist exactly three $(-1)$-curves $\bar{E}_1'$, $\bar{E}_2'$ and $\bar{E}_3'$ meeting $\bar{E}_0$. 
Furthermore, the union $\bar{E}_1'+\bar{E}_2'+\bar{E}_3'$ is a disjoint union. 
This yields a contraction $\psi\colon W \to V$ of $\bar{E}_1'+\bar{E}_2'+\bar{E}_3'$, defined over $\Bbbk$, where $V$ is a $\Bbbk$-form of $\P^1 \times \P^1$ with a $\Bbbk$-rational point $\hat{\msp}$ on $\hat{Q} \coloneqq  \psi_{\ast}(\bar{Q})$. 
By Lemma \ref{lem:p1p1}, we obtain that $S$ is rational. 
Set $\hat{\msp}_i \coloneqq  \psi(\bar{E}_i')$ on $V_{\bar{\Bbbk}}$ for $i=1,2,3$, and let $\hat{L}_1$ and $\hat{L}_2$ be irreducible curves of types $(1,0)$ and $(0,1)$ on $V_{\bar{\Bbbk}}$ passing through $\hat{\msp}$, respectively. 
Note that the union $\hat{L}_1 + \hat{L}_2$ is defined over $\Bbbk$, so that $\hat{\msp}_i \in \hat{L}_1 \cup \hat{L}_2$ for some $i=1,2,3$ if and only if $\hat{\msp}_i \in \hat{L}_1 \cup \hat{L}_2$ for every $i=1,2,3$. 
Set $E_i' \coloneqq  \tau_{\ast}^{-1}(\bar{E}_i')$ for $i=1,2,3$, and $L_i \coloneqq  (\psi \circ\tau)_{\ast}^{-1}(\hat{L}_i)$ for $i=1,2$. 
We consider the following two cases separately. 

\smallskip

{\bf Case 1}: $\hat{\msp}_1,\hat{\msp}_2,\hat{\msp}_3 \in \hat{L}_1 \cup \hat{L}_2$. 
Since $\hat{Q}$, $\hat{L}_1$ and $\hat{L}_2$ intersect at only one point $\hat{\msp}$, $V \setminus \Supp(\hat{Q} + \hat{L}_1+\hat{L}_2)$ contains a cylinder of $V$ (see Example \ref{ex:(4)}). 
Hence, there exists a cylinder of $S$ contained in 
\begin{align*}
&S \setminus \Supp\Bigg( \pi_{\ast}\bigg( \sum_{i=1}^{m+2}E_i + \sum_{j=1}^3E_j' + L_1+L_2\bigg)\Bigg)\\
&\quad \simeq Y \setminus \Supp \bigg(Q + \sum_{i=1}^{m+2}E_i +  \sum_{j=1}^3E_j' + L_1+L_2\bigg) \\
&\quad \simeq V \setminus \Supp(\hat{Q} + \hat{L}_1+\hat{L}_2). 
\end{align*}
Therefore, $S$ is rational and cylindrical. 

\smallskip

{\bf Case 2}: $\hat{\msp}_1,\hat{\msp}_2,\hat{\msp}_3 \not\in \hat{L}_1 \cup \hat{L}_2$. 
For $i=1,2,3$, let $\hat{C}_i$ be an irreducible curve of type $(1,1)$ on $V_{\bar{\Bbbk}}$ passing through two points $\hat{\msp}$ and $\hat{\msp}_i$ such that $I_{\hat{\msp}}(\hat{C}_i,\hat{Q}) = 2$.
Note that the union $\sum _{j=1}^3\hat{C}_j$ is defined over $\Bbbk$, and $\hat{C}_i = \hat{C}_j$ if and only if $\hat{\msp}_j \in \hat{C}_i$ for $i,j=1,2,3$. 
Since $\hat{Q}$, $\hat{C}_1$, $\hat{C}_2$, $\hat{C}_3$, $\hat{L}_1$ and $\hat{L}_2$ intersect at exactly one point $\hat{\msp}$, $V \setminus \Supp\bigg(\hat{Q} + \sum_{j=1}^3\hat{C}_j + \hat{L}_1+\hat{L}_2 \bigg)$ contains a cylinder of $V$  (see Example \ref{ex:(4)}). 
Hence, there exists a cylinder of $S$ contained in 
\begin{align*}
    & S \setminus \Supp \Bigg( \pi_{\ast}\bigg( \sum_{i=1}^{m+2}E_i + \sum_{j=1}^3(E_j'+C_j) +L_1+L_2\bigg)\Bigg)\\
    &\quad \simeq Y \setminus \Supp \bigg(Q + \sum_{i=1}^{m+2}E_i +  \sum_{j=1}^3(E_j'+C_j) + L_1+L_2\bigg) \\
    &\quad \simeq V \setminus \Supp \bigg(\hat{Q} + \sum_{j=1}^3\hat{C}_j + \hat{L}_1+\hat{L}_2 \bigg),
\end{align*}
where $C_j \coloneqq  (\psi \circ \tau)_{\ast}^{-1}(\hat{C}_j)$ for $j=1,2,3$. 
Therefore, $S$ is rational and cylindrical. 
\end{proof}

\begin{lemma}\label{lem(5)}
The case $\ell_S = m+4$ does not occur. 
\end{lemma}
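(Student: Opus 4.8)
The plan is to rule out $\ell_S = m+4$ by a degree count. Suppose for contradiction that $\ell_S = m+4$. By construction, $Z$ is obtained from $Y$ by the contraction $\sigma$ of the $\ell_S$ pairwise disjoint $(-1)$-curves $E_0, E_1, \dots, E_{\ell_S-1}$, and since contracting a $(-1)$-curve raises $K^2$ by one, we get $(-K_Z)^2 = (-K_Y)^2 + \ell_S = \ell_S - m + 3 = 7$. As recorded in the discussion preceding Lemma \ref{lem(2)}, the surface $Z$ is a $\Bbbk$-minimal smooth del Pezzo surface. Thus $\ell_S = m+4$ would force $Z$ to be a $\Bbbk$-minimal del Pezzo surface of degree $7$.

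I would then contradict this by recalling that a del Pezzo surface of degree $7$ is never $\Bbbk$-minimal. Over $\bar{\Bbbk}$ such a surface is the blow-up of $\P^2_{\bar{\Bbbk}}$ at two points and contains exactly three $(-1)$-curves: the two exceptional curves $G_1, G_2$ and the strict transform $L$ of the line through the two centers, with $L \cdot G_1 = L \cdot G_2 = 1$ and $G_1 \cdot G_2 = 0$. Since $L$ is the unique $(-1)$-curve meeting the other two, it is singled out intrinsically by the intersection configuration and is therefore stable under $\gal(\bar{\Bbbk}/\Bbbk)$; hence $L$ is defined over $\Bbbk$ and can be contracted over $\Bbbk$. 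This contradicts the $\Bbbk$-minimality of $Z$, so $\ell_S = m+4$ cannot occur.

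The argument carries essentially no serious obstacle; it is a degree computation together with the standard fact that a degree-$7$ del Pezzo surface always admits a $(-1)$-curve defined over $\Bbbk$. The only point that needs care is verifying that $\sigma$ contracts precisely $\ell_S$ curves, so that the equality $(-K_Z)^2 = 7$ is correct: this follows from Lemma \ref{lem(1)}, which produces the $\Bbbk$-rational curve $E_0$ with $E_0 \cdot Q = 2$ disjoint from each $E_i$, so that $\{E_0, E_1, \dots, E_{\ell_S-1}\}$ is a Galois-invariant family of $\ell_S$ disjoint $(-1)$-curves realizing the invariant $\ell_S$.
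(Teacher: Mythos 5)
Your proof is correct and follows exactly the paper's argument: compute $(-K_Z)^2=\ell_S-m+3=7$, invoke the $\Bbbk$-minimality of $Z$ established before Lemma \ref{lem(2)}, and conclude via the standard fact that a degree-$7$ del Pezzo surface is never $\Bbbk$-minimal. The paper leaves that last fact implicit, whereas you spell out the Galois-stability of the middle $(-1)$-curve; otherwise the two proofs coincide.
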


\begin{proof}
Suppose that $\ell_S = m+4$. Then $Z$ is a smooth del Pezzo surface of degree $7$. However, this is impossible since $Z$ is $\Bbbk$-minimal. 
\end{proof}

\begin{lemma}\label{lem(6)}
If $\ell_S = m+5$, then $S$ is rational. Moreover, if $Q(\Bbbk) \not= \emptyset$, then $S$ is cylindrical. 
\end{lemma}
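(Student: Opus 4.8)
The plan is to read both claims off the $\Bbbk$-minimal model $Z$ constructed before Lemma \ref{lem(2)}. First I would pin down $Z$. Since $(-K_Z)^2 = \ell_S - m + 3 = 8$ and $Z$ is a $\Bbbk$-minimal smooth del Pezzo surface, $Z_{\overline{\Bbbk}}$ is a degree-$8$ del Pezzo, hence either $\P^1\times\P^1$ or $\F_1$. The surface $\F_1$ carries a unique $(-1)$-curve, which is automatically $\gal(\overline{\Bbbk}/\Bbbk)$-invariant and therefore contractible over $\Bbbk$; this contradicts $\Bbbk$-minimality. Thus $Z$ is a $\Bbbk$-form of $\P^1\times\P^1$. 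For rationality I would then use that the contraction $\sigma\colon Y\to Z$ collapses the curve $E_0$, which is defined over $\Bbbk$ by Lemma \ref{lem(1)}, to a single $\Bbbk$-rational point $q_0:=\sigma(E_0)\in Z(\Bbbk)$. Hence $Z(\Bbbk)\neq\emptyset$, so $Z$ is rational by Lemma \ref{lem:p1p1}, and since $S$ is $\Bbbk$-birational to $Z$ (via $\pi$ and $\sigma$), the surface $S$ is rational. This step uses nothing about $Q(\Bbbk)$, which matches the unconditional rationality claim.

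For cylindricity I would work with the anticanonical curve $\hat Q:=\sigma_*(Q)\sim -K_Z$ (using $-K_{Y_{\overline{\Bbbk}}}\sim Q+E_0$ from Lemma \ref{lem(1)} and that $\sigma$ contracts $E_0$), a $(2,2)$-curve on $Z$ that is singular at $q_0$ because $E_0\cdot Q=2$; thus $\hat Q$ is a nodal rational anticanonical curve whose node is the $\Bbbk$-point $q_0$. Here the hypothesis $Q(\Bbbk)\neq\emptyset$ enters: a $\Bbbk$-point of $Q$ pushes forward to a \emph{smooth} $\Bbbk$-point $\hat\msp\in\hat Q$. I would blow up $\hat\msp$, obtaining a degree-$7$ surface on which the two rulings of $Z$ through $\hat\msp$ become disjoint $(-1)$-curves; these are exactly the pair of $(-1)$-curves meeting the $\Bbbk$-rational exceptional curve over $\hat\msp$, so their union is $\gal(\overline{\Bbbk}/\Bbbk)$-invariant and its contraction descends to a surface $V$ with $V_{\overline{\Bbbk}}\cong\P^2$. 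The image of $q_0$ is a $\Bbbk$-point of $V$, so $V\simeq\P^2_{\Bbbk}$ by Theorem \ref{chatelet}, and a direct class computation shows that $\hat Q$ is transformed into a \emph{nodal cubic} $C$ whose node is the $\Bbbk$-rational image of $q_0$. At this point Example \ref{ex:(3)} applies: blowing up the node of $C$ yields $\F_1$ and a cylinder in $\F_1\setminus(C'+F_1'+F_2')$. Because $Q$ maps into the boundary $C$ of this cylinder, the cylinder descends across $\pi\colon Y\to S$ to give a cylinder on $S$.

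The hard part will be the birational bookkeeping that legitimizes the descent. Concretely, I must verify that \emph{every} curve contracted by the composite $Y\dashrightarrow V$ — namely the $(-1)$-curves $E_0,E_1,\dots,E_{\ell_S-1}$ meeting $Q$ together with the strict transforms of the two rulings through $\hat\msp$ — maps into the boundary configuration of Example \ref{ex:(3)}, and that all auxiliary curves I introduce form $\gal(\overline{\Bbbk}/\Bbbk)$-invariant unions. This is exactly where the $(2,2)$-nature of $\hat Q$ forces the argument away from the simpler conic-and-tangent picture of Example \ref{ex:(2)}: the more naive reduction through $W$ (contracting the two $(-1)$-curves meeting $\bar E_0$) only produces $\P^2_{\Bbbk}$ minus a conic and a \emph{secant} line, a configuration of log Kodaira dimension $0$ that carries no cylinder, so the passage to a nodal cubic — and hence the smooth $\Bbbk$-point $\hat\msp$ coming from $Q(\Bbbk)\neq\emptyset$ — is indispensable. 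This also explains structurally why cylindricity, unlike rationality, genuinely requires the extra hypothesis $Q(\Bbbk)\neq\emptyset$: rationality only needs the $\Bbbk$-point $q_0$ at the node, whereas placing $Q$ in the boundary of an honest cylinder needs the additional smooth $\Bbbk$-point on $Q$.
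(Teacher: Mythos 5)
Your rationality argument is correct, and it is a genuine (slightly shorter) alternative to the paper's: the paper passes to the degree-$7$ surface $W$, contracts the two $(-1)$-curves meeting $\bar{E}_0$ to reach a $\Bbbk$-form $V$ of $\P^2$ containing the conic $\hat{Q}$ defined over $\Bbbk$, and applies Lemma \ref{lem:line conic}; your observation that $\sigma(E_0)$ is already a $\Bbbk$-rational point of the $\Bbbk$-minimal form $Z$ of $\P^1\times\P^1$, so that Lemma \ref{lem:p1p1} applies at once, works just as well and likewise uses nothing about $Q(\Bbbk)$.

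The cylindricity argument, however, has a genuine gap, and it sits exactly in the ``birational bookkeeping'' you deferred. Your elementary transformation $Z\dashrightarrow V$ is centered at the \emph{smooth} $\Bbbk$-point $\hat{\msp}$ of $\hat{Q}$: blowing up $\hat{\msp}$ creates an exceptional curve $\mathcal{E}$, and after contracting the two rulings through $\hat{\msp}$ this $\mathcal{E}$ becomes a line $\ell\subset V\simeq\P^2$ not passing through the node of the cubic $C$, hence a $1$-section of the ruling of $\F_1$ used in Example \ref{ex:(3)}. The inverse map $\F_1\dashrightarrow S$ contracts $\ell$ back to the point $\msp\in Q$, so any open subset of $\F_1$ that is to map isomorphically onto an open subset of $S$ must be disjoint from $\ell$. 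But $\ell$ is not contained in the boundary $C'+F_1'+F_2'$ of Example \ref{ex:(3)} and is horizontal for the ruling producing that cylinder; in fact, with $C'\sim E'+3F$ and $\ell\sim E'+F$ one gets $K_{\F_1}+C'+\ell+F_1'+F_2'\sim 3F$, so $\F_1\setminus\Supp(C'+\ell+F_1'+F_2')$ has log Kodaira dimension $1$ and contains no cylinder whatsoever. Thus the cylinder of Example \ref{ex:(3)} cannot be made to descend along your chain of maps. Relatedly, your closing claim that the route through $W$ ``only produces $\P^2_{\Bbbk}$ minus a conic and a secant line'' misreads that construction: the paper does go through $W$ and $V$, but it enlarges the boundary by the two conics $\hat{Q}_1,\hat{Q}_2$ osculating $\hat{Q}$ to order $4$ at $\hat{\msp}$ together with their common tangent $\hat{T}$ (this is precisely where $Q(\Bbbk)\neq\emptyset$ enters), and Example \ref{ex:(2)} then yields a genuine cylinder. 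That route avoids your obstruction because its elementary transformation is centered at the node $q_0$, whose exceptional curve is $\bar{E}_0$, a curve already present on $Y$ rather than one created by an auxiliary blow-up.
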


\begin{proof}
By assumption, $(-K_Z)^2 = 8$ and $W$ is a smooth del Pezzo surface of degree $7$. We notice that $Z$ is a $\Bbbk$-form of $\P^1 \times \P^1$ since $Z$ is $\Bbbk$-minimal. Then there exist exactly two $(-1)$-curves $\bar{E}_1'$ and $\bar{E}_2'$ meeting $\bar{E}_0$. 
Note that the union $\bar{E}_1'+\bar{E}_2'$ is a disjoint union. 
This yields a contraction $\psi\colon W \to V$ of $\bar{E}_1'+\bar{E}_2'$, defined over $\Bbbk$, where $V$ is a $\Bbbk$-form of $\P^2$. 
Note that the curve $\hat{Q} \coloneqq \psi_{\ast}(\bar{Q})$ has self-intersection number \(4\), hence it is a conic on \(V\). By Lemma \ref{lem:line conic}, $V \simeq \P^2_{\Bbbk}$, so that $S$ is rational. 
In what follows, we further assume that $Q(\Bbbk) \not= \emptyset$. Then $\hat{Q}$ has a $\Bbbk$-rational point $\hat{\msp}$. 
For $i=1,2$, set $\hat{\msp}_i \coloneqq  \psi(\bar{E}_i')$ on $V_{\bar{\Bbbk}}$ and let $\hat{Q}_i$ be an irreducible conic passing through two points $\hat{\msp}$ and $\hat{\msp}_i$ such that $I_{\hat{\msp}}(\hat{Q},\hat{Q}_i) = 4$.
Note that $\hat{Q}_1+\hat{Q}_2$ is defined over $\Bbbk$, and $\hat{Q}_1 = \hat{Q}_2$ if and only if $\hat{\msp}_2 \in \hat{Q}_1$. 
Then there exists a unique line $\hat{T}$ on $V$, which is the tangent line of $\hat{Q}$, $\hat{Q}_1$ and $\hat{Q}_2$ at $\hat{\msp}$. 
Since $V \setminus \Supp\bigg(\hat{Q} + \sum_{j=1}^2\hat{Q}_j + \hat{T}\bigg)$ contains a cylinder of $V$ (see Example \ref{ex:(2)}), there exists a cylinder of $S$ contained in 
\begin{align*}
&S \setminus \Supp\Bigg( \pi_{\ast}\bigg( \sum_{i=1}^{m+4}E_i + \sum_{j=1}^2(E_j'+Q_j) + T\bigg)\Bigg)\\ 
 &\quad \simeq Y \setminus \Supp \bigg(Q + \sum_{i=1}^{m+4}E_i + \sum_{j=1}^2(E_j'+Q_j) + T\bigg) \\
 &\quad \simeq V \setminus \Supp\bigg(\hat{Q} + \sum_{j=1}^2\hat{Q}_j + \hat{T}\bigg), 
\end{align*}
where $E_i' \coloneqq  \tau_{\ast}^{-1}(\bar{E}_i')$ and $Q_i \coloneqq  (\psi \circ \tau)_{\ast}^{-1}(\hat{Q}_i)$ for $i=1,2$, and $T \coloneqq  (\psi \circ \tau)_{\ast}^{-1}(\hat{T})$. 
Therefore, $S$ is rational and cylindrical. 
\end{proof}

\begin{lemma}\label{lem(7)}
If $\ell_S = m+6$, then $S$ is rational and cylindrical. 
\end{lemma}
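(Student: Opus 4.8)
The plan is to treat $\ell_S = m+6$ as the extremal case, in which the $\Bbbk$-minimal model $Z$ already has degree $9$. Recall from the discussion preceding Lemma \ref{lem(2)} that $Z$ is a $\Bbbk$-minimal smooth del Pezzo surface with $(-K_Z)^2 = \ell_S - m + 3$; here this equals $9$, so $Z_{\overline{\Bbbk}} \simeq \P^2_{\overline{\Bbbk}}$ and $Z$ is a $\Bbbk$-form of $\P^2$. Pushing the relation $-K_{Y_{\overline{\Bbbk}}} \sim Q + E_0$ of Lemma \ref{lem(1)} forward along $\sigma$ (which contracts $E_0$ together with $E_1, \dots, E_{m+5}$) shows that $\hat{Q} \coloneqq \sigma_{\ast}(Q) \sim -K_Z$ is an irreducible anticanonical curve, that is, a plane cubic of arithmetic genus $1$. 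As $Q$ is smooth rational, $\hat{Q}$ carries exactly one singular point; since $E_0 \cdot Q = 2$, this singularity sits at $q \coloneqq \sigma(E_0)$ and is a node, the two branches being the images of the two points of $E_0 \cap Q$.

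For rationality, the key observation is that $E_0$ is defined over $\Bbbk$ by Lemma \ref{lem(1)}, so $\sigma$ is defined over $\Bbbk$ and the node $q = \sigma(E_0)$ is a $\Bbbk$-rational point of $Z$. Thus $Z$ is a $\Bbbk$-form of $\P^2$ possessing a $\Bbbk$-rational point, whence $Z \simeq \P^2_{\Bbbk}$ by Ch\^atelet's theorem (Theorem \ref{chatelet}). Since $S$ is birational to $Z$, it is rational.

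For cylindricity, I would pass to $W$, the contraction of $E = E_1 + \dots + E_{m+5}$. As $W \to Z$ contracts only $\bar{E}_0 \coloneqq \tau_{\ast}(E_0)$, the surface $W$ is the blow-up of $Z \simeq \P^2_{\Bbbk}$ at the $\Bbbk$-point $q$; hence $W \simeq \F_1$ over $\Bbbk$, with $\bar{E}_0$ the exceptional $(-1)$-curve and $\bar{Q} \coloneqq \tau_{\ast}(Q)$ the strict transform of the nodal cubic, meeting $\bar{E}_0$ in the two points of $\bar{Q} \cap \bar{E}_0$. This is exactly the configuration of Example \ref{ex:(3)}: choosing the two fibers $F_1', F_2'$ of the ruling of $\F_1$ through these points (their union being defined over $\Bbbk$), Example \ref{ex:(3)} furnishes a cylinder of $W$ inside $W \setminus \Supp(\bar{Q} + F_1' + F_2')$. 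Transporting it back through
\begin{align*}
S \setminus \Supp\left( \pi_{\ast}\left( \sum_{i=1}^{m+5} E_i + \hat{F}_1 + \hat{F}_2 \right)\right)
&\simeq Y \setminus \Supp\left( Q + \sum_{i=1}^{m+5} E_i + \hat{F}_1 + \hat{F}_2 \right) \\
&\simeq W \setminus \Supp(\bar{Q} + F_1' + F_2'),
\end{align*}
where $\hat{F}_j \coloneqq \tau_{\ast}^{-1}(F_j')$, yields a cylinder of $S$. The middle isomorphism is the delicate point: it relies on $\tau$ being an isomorphism away from $E_1, \dots, E_{m+5}$ and on the fact that each contracted curve $E_i$ meets $Q$, so its image $\tau(E_i)$ lies on $\bar{Q}$ and is already removed on the $W$-side.

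The main obstacle I expect is bookkeeping rather than a genuine difficulty: one must confirm that the two points of $E_0 \cap Q$ are distinct (so that $\hat{Q}$ is nodal, not cuspidal, matching Example \ref{ex:(3)}) and that the curves $E_1, \dots, E_{m+5}$ all map into the boundary $\bar{Q}$ on $W$, so that the displayed complements are genuinely isomorphic. Both facts follow from the intersection data of Lemma \ref{lem(1)} and the explicit construction of $E_0, E_1, \dots, E_{m+5}$, so no essentially new input beyond the earlier cases should be required.
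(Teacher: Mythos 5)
Your proof is correct and follows essentially the same route as the paper: rationality comes from identifying $Z$ with $\P^2_{\Bbbk}$ (you use the $\Bbbk$-rational point $\sigma(E_0)$ and Theorem \ref{chatelet}, where the paper invokes Lemma \ref{lem:line conic}; the steps are equivalent), and the cylinder is produced on $W \simeq \F_1$ from the nodal configuration of Example \ref{ex:(3)} using the two fibers through the points of $\bar{E}_0 \cap \bar{Q}$. The bookkeeping points you flag (that $E_0$ and $Q$ meet in two distinct points, and that each contracted $E_i$ maps into $\bar{Q}$) are exactly what the paper uses implicitly, so no further input is needed.
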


\begin{proof}
By assumption, $Z$ is a $\Bbbk$-form of $\P^2$ with a $1$-curve $\sigma_{\ast}(E_0)$. 
By Lemma \ref{lem:line conic}, $Z \simeq \P^2_{\Bbbk}$, so that $S$ is rational. 
Notice that $W$ is the Hirzebruch surface $\F_1$ of degree $1$. 
Then $\bar{E}_0$ and $\bar{Q}$ intersect at exactly two points $\bar{\msp}_1$ and $\bar{\msp}_2$. 
Let $\bar{F}_1$ and $\bar{F}_2$ be distinct fibers of the ruling on $W = \F_1$ passing through $\bar{\msp}_1$ and $\bar{\msp}_2$, respectively. 
Note that the disjoint union $\bar{F}_1+\bar{F}_2$ is defined over $\Bbbk$ since the union $\bar{\msp}_1+\bar{\msp}_2$ is defined over $\Bbbk$. 
Since $W \setminus \Supp(\bar{Q} + \bar{F}_1 + \bar{F}_2)$ contains a cylinder of $W$ (see Example \ref{ex:(3)}), there exists a cylinder of $S$ contained in 
\begin{align*}
S \setminus \Supp\left( \pi_{\ast}\left( \sum_{i=1}^{m+5}E_i +  F_1 + F_2\right)\right) &\simeq Y \setminus \Supp \left(Q + \sum_{i=1}^{m+5}E_i +  F_1 + F_2\right) \\
&\simeq W \setminus \Supp(\bar{Q} + \bar{F}_1 + \bar{F}_2),
\end{align*}
where $F_i \coloneqq  \tau_{\ast}^{-1}(\bar{F}_i)$ for $i=1,2$. 
Therefore, $S$ is rational and cylindrical. 
\end{proof}

\begin{proof}[Proof of Theorem \ref{thm:m+5}]
    This follows from Lemmas \ref{lem(2)}, \ref{lem(3)}, \ref{lem(4)}, \ref{lem(5)}, \ref{lem(6)} and \ref{lem(7)}.
\end{proof}

Restricting Theorem \ref{thm:combined} to $\Bbbk$-forms $S$ of Picard rank one, we obtain the following corollary.

\begin{corollary}\label{cor:rank1}
    Let \(S\) be a \(\Bbbk\)-form of \(S_{m}^{n}\) for some \(m\geq 2\), and \(Q\) the unique \((-m)\)-curve on the minimal resolution defined over \(\Bbbk\). Assume that \(S\) is of Picard rank one and \(Q(\Bbbk)\neq \emptyset\). Then \(S\) is cylindrical if and only if \(n\leq m+3\). 
\end{corollary}

\begin{proof}
If \(n\leq m+3\), then $S$ is cylindrical by Theorem \ref{thm:intermediate}. Hence, we treat the case \(n\geq m+4\) in what follows. We note that $\rho_{\Bbbk}(Y)=2$. 

Assume that \(n= m+4\). Let $\varphi\colon Y \to B$, $\tau\colon Y \to Z$ and $\psi\colon Z \to B$ be as in the proof of Theorem \ref{thm:m+4}. Here, note that $B$ is a projective curve. Suppose that $\ell_S > 0$. Then $\rho_{\Bbbk}(Z) < \rho_{\Bbbk}(Y) = 2$ since $\tau$ is not the identity morphism. On the other hand, since $Z$ admits a fibration structure, we have $\rho_{\Bbbk}(Z) \ge 2$, which is a contradiction. Thus, $\ell_S = 0$, which implies that $S$ is not cylindrical by Theorem \ref{thm:m+4}. 

Assume that \(n> m+4\). Then $n=m+5$ and $m\in \{2,3\}$. 
By Lemma \ref{lem(1)}, there exists a $(-1)$-curve $E_0$ defined over $\Bbbk$ such that $E_0 \cdot Q = 2$. Let $\psi\colon Y \to Z$ be the contraction of $E_0$ defined over $\Bbbk$. Then $Z$ is a smooth del Pezzo surface of Picard rank one since $\rho_{\Bbbk}(Y)=2$. Moreover, we have $(-K_Z)^2 = (-K_Y)^2 + 1 \le 2$. By Theorem \ref{thm:Saw cyl iff}, $Z$ is not cylindrical. Hence, $Y$ is not cylindrical by Theorem \ref{thm:Saw cyl bir}. Therefore, $S$ is not cylindrical. This completes the proof. 
\end{proof}

\begin{remark}
Let \(S\) be a \(\Bbbk\)-form of the del Pezzo surface \(S_{1}^{n}\). Assume that \(S\) is of Picard rank one and \(S(\Bbbk)\neq \emptyset\). In this case, \(S\) is cylindrical if and only if \(n\leq 4\) by {\cite{DK18}} (see also Theorem \ref{thm:Saw cyl bir}). Corollary \ref{cor:rank1} can be considered a generalization of this result.
\end{remark}

\section{Embedding models}\label{sect:main2}

Over an algebraically closed field $\bar{\Bbbk}$, the surface $S_m^{m+4}$ is isomorphic to a hypersurface or a complete intersection in a weighted projective space (see \cite[Lemma 4.4]{KKW25}). We first show that any $\Bbbk$-form of the surface \(S_{m}^{m+4}\) is also embedded in a weighted projective space as either a hypersurface or a complete intersection.

\smallskip

First, we recall the orbifold Riemann--Roch formula, which is due to \cite{YPG}. 

\begin{theorem}[{\cite[Chapter III]{YPG}}]\label{thm:YPG}
    Let \(S\) be a normal projective surface and \(D\) a \(\Q\)-Cartier divisor on \(S\). Then we have the following formula:
    \begin{align*}
        \chi(S,\OO_S(D))=\chi(\OO_S)+\frac{1}{2}D\cdot(D-K_S)+\sum_{\msp\in \Sing(S)}c_{\msp}^{D}(j),
    \end{align*}
    where \(c_{\msp}^{D}(j)\) is the correction term at each singular point \(\msp\) that depends on the type of quotient singularity at \(\msp\) and the local behavior of \(D\), and \(j\) is the local index of \(D\) at \(\msp\). 
\end{theorem}

The correction term \(c_{\msp}^{D}\) is defined recursively; see \cite[Section 8]{YPG}. If \(S\) is a normal projective surface with exactly one quotient singularity of type \(\frac{1}{m}(1,1)\) such that \(-K_S\) is ample \(\Q\)-Cartier, then we obtain the following.
\begin{lemma}\label{lem:cp term}
    Let \(S\) be a normal projective surface with exactly one quotient singularity of type \(\frac{1}{m}(1,1)\) such that \(-K_S\) is ample \(\Q\)-Cartier. Then we have
    \begin{align*}
        c_{\msp}^{-K_S}(j)\coloneqq
        \begin{dcases}
            \sigma_{t}\left(\frac{1}{m}(1,1)\right)-\sigma_{0}\left(\frac{1}{m}(1,1)\right) &\text{ if } t\not\equiv 0 \text{ (mod } m),\\
            0 & \text{ if } t\equiv 0 \text{ (mod } m),
        \end{dcases}
    \end{align*}
    where \(t\) is an integer \(0\leq t\leq m-1\) that is defined as a solution of \(-2j\equiv t \text{ (mod } m)\). Moreover, a direct calculation gives
    \begin{align*}
        (\sigma_{t}-\sigma_0)\left(\frac{1}{m}(1,1)\right)=\frac{1}{m}\left(-\frac{m-1}{2}+\frac{(m-t+1)(t-1)}{2}\right) \text{ for all } 1\leq t\leq m-1.
    \end{align*}
\end{lemma}
We will use this lemma in the proof of Theorem \ref{thm:embedding}.

\smallskip

\begin{theorem}\label{thm:embedding}
    Let \(S\) be a \(\Bbbk\)-form of \(S_{m}^{m+4}\). If \(m=2u-1\) with \(u\geq 2\), then \(S\) is isomorphic to a weighted complete intersection of two hypersurfaces of degree \(2u\) embedded in \(\P(1,1,u,u,2u-1)\). If \(m=2u\) with \(u\geq 1\), then \(S\) is isomorphic to a weighted hypersurface of degree \(2u+2\) in \(\P(1,1,u,u+1)\).
\end{theorem}
\begin{proof}[Proof of Theorem \ref{thm:embedding}]
    By the Kawamata--Viehweg vanishing theorem, we have \(H^{q}(S,-jK_S)=0\) for \(q=1,2\) and \(j>0\). Hence, we have the following orbifold Riemann--Roch formula by Theorem \ref{thm:YPG}:
    \begin{align*}
        h^{0}(S,-jK_S)=\chi(S,-jK_S)=1+\frac{j(j+1)}{2}(-K_S)^2+c_{\msp}^{-K_S}(j).
    \end{align*}
    By Lemma \ref{lem:cp term}, we obtain 
    \begin{align*}
        h^{0}(S,-K_S)=1+\frac{4}{m}+c_{\msp}^{-K_S}(1)=1+\frac{4}{m}+\frac{m-4}{m}=2.
    \end{align*}
    If \(m=2u-1\) for \(u\geq 2\), then we have
    \begin{align*}
        h^{0}(S,-uK_S)&=1+\frac{2u(u+1)}{2u-1}+c_{\msp}^{-K_S}(u)=1+\frac{2u(u+1)}{2u-1}+\frac{u-2}{2u-1}=u+3,\\
        h^{0}(S,-(2u-1)K_S)&=1+4u+c_{\msp}^{-K_S}(2u-1)=4u+1, ~ \text{ and }\\
        h^{0}(S,-2uK_S)&=1+\frac{4u(2u+1)}{2u-1}+c_{\msp}^{-K_S}(2u)=1+\frac{4u(2u+1)}{2u-1}+\frac{2u-5}{2u-1}=4u+6.
    \end{align*}
    This implies that the anticanonical model of \(S\) is \(\Proj \left(\Bbbk[x_0,x_1,x_2,x_3,x_4]/(f_1,f_2)\right)\), where \(\deg(x_0)=\deg(x_1)=1, \deg(x_2)=\deg(x_3)=u, \deg(x_4)=2u-1\) and \(f_1\), \(f_2\) are degree \(2u\) quasi-homogeneous polynomials. Since \(-K_S\) is ample, \(S\) is isomorphic to its anticanonical model. Hence, \(S\) is isomorphic to a weighted complete intersection of two degree \(2u\) hypersurfaces embedded in \(\P(1,1,u,u,2u-1)\).

    If \(m=2u\) for \(u\geq 1\), then we have
    \begin{align*}
        h^{0}(S,-uK_S)&=1+(u+1)+c_{\msp}^{-K_S}(u)=u+2,\\
        h^{0}(S,-(u+1)K_S)&=1+\frac{(u+1)(u+2)}{u}+c_{\msp}^{-K_S}(u+1)\\
        &=1+\frac{(u+1)(u+2)}{u}+\frac{2u-4}{2u}\\&=u+5,~ \text{ and }\\
        h^{0}(S,-(2u+2)K_S)&=1+\frac{2(u+1)(2u+3)}{u}+c_{\msp}^{-K_S}(2u+2)\\
        &=1+\frac{2(u+1)(2u+3)}{u}+\frac{2u-6}{2u}\\&=4u+13.
    \end{align*}
    Thus, we obtain that the anticanonical model of \(S\) is \(\Proj \left(\Bbbk[x_0,x_1,x_2,x_3]/(f)\right)\), where \(\deg(x_0)=\deg(x_1)=1, \deg(x_2)=u, \deg(x_3)=u+1\) and \(f\) is a degree \(2u+2\) quasi-homogeneous polynomial. Since \(-K_S\) is ample, \(S\) is isomorphic to its anticanonical model. Hence, \(S\) is isomorphic to a weighted hypersurface of degree \(2u+2\) embedded in \(\P(1,1,u,u+1)\).
\end{proof}

\section{Applications}\label{sect:examples}


Let $u$ be an integer with $u \ge 2$. 
In this section, we consider examples of \(\Bbbk\)-forms of \(S_{2u-1}^{2u+3}\). Moreover, we shall discuss whether such $\Bbbk$-forms contain a cylinder or not. 

By Theorem \ref{thm:embedding}, every \(\Bbbk\)-form of \(S_{2u-1}^{2u+3}\) is isomorphic to a weighted complete intersection of two degree \(2u\) hypersurfaces in \(\P(1,1,u,u,2u-1)\). 
After a suitable weighted coordinate change, we may write such a surface \(S\) in the form
\begin{equation}\label{eq:odd-ci-model}
\begin{dcases}
xw+zt=0,\\
yw+z^2+t^2+f(x,y)z+g(x,y)t+h(x,y)=0,
\end{dcases}
\end{equation}
where \(f(x,y)\), \(g(x,y)\), and \(h(x,y)\) are homogeneous polynomials of degrees \(u\), \(u\), and \(2u\), respectively. For simplicity, we restrict to the case \(f(x,y)=g(x,y)=0\).

\smallskip

Since the variables \(x\) and \(y\) have weight \(1\), the anticanonical system is generated by their restrictions, and its members are the hyperplane sections cut out by \(y=ax\) together with the member \(x=0\). Let \(H_a\) denote the divisor on \(S\) given by \(y=ax\), and let \(H_x\) be the divisor cut out by \(x=0\). On the affine chart \(S\setminus H_x\), the section \(H_a\) is described by
\begin{equation*}
\begin{dcases}
w+zt=0,\\
aw+z^2+t^2+h(1,a)=0.
\end{dcases}
\end{equation*}
Eliminating \(w\), we obtain
\[
H_a\cap (S\setminus H_x)\simeq \left\{-azt+z^2+t^2+h(1,a)=0\right\}.
\]
Therefore, \(H_a\) is reducible if and only if the associated binary quadratic form is degenerate, namely if and only if
\begin{equation*}
\Bigl(1-\frac{a^2}{4}\Bigr)h(1,a)=0.
\end{equation*}
Whenever \(H_a\) is reducible, the strict transform of \(H_a\) on the minimal resolution contains \((-1)\)-curves meeting the distinguished curve \(Q\). In this way, the zeros of \(h(1,a)\) control the possible values of the invariant \(\ell_S\). For instance, if
\[
h(x,y)=x^{2u}+y^{2u},
\]
then \(h(1,a)=1+a^{2u}\). Thus, the resulting \(\Q\)-form satisfies \(\ell_S\le 2u-1\). By Corollary \ref{cor:m+4}, this implies that the surface is neither rational nor cylindrical over \(\Q\). More generally, by choosing the polynomial \(h(x,y)\) appropriately, one obtains examples with various prescribed values of \(\ell_S\).

\smallskip

We now pass from these weighted projective models to a relative construction that produces a non-isotrivial Fano threefold $g\colon X\rightarrow \P^1$ such that $X$ contains a vertical cylinder with respect to $g$. 

\begin{proof}[Proof of Theorem \ref{thm:Fano}]
Let \(B\coloneqq \P^1_{\C}\) with homogeneous coordinates \([s_0:s_1]\), and let \(L'\coloneqq \OO_B(1)\). Consider the graded \(\OO_B\)-algebra \(\mathcal{A}\) generated by
\[
x,\ y \in H^0(B,\OO_B),\qquad
z,\ t \in H^0(B,L'),\qquad
w \in H^0(B,(L')^{\otimes 2}),
\]
with weighted degrees \(1,1,u,u,2u-1\), respectively. Set
\[
p\colon \mathcal{Y}\coloneqq \Proj_B(\mathcal{A})\longrightarrow B.
\]
Every fiber of \(p\) is isomorphic to \(\P(1,1,u,u,2u-1)\). Denote by \(H\) the relative weighted hyperplane class on \(\mathcal{Y}\), and by \(L\) the pullback of \(L'\) from \(B\). Then
\[
H=[x=0]=[y=0],\qquad
uH+L=[z=0]=[t=0],\qquad
(2u-1)H+2L=[w=0].
\]

Fix distinct general complex numbers \(\alpha_1,\dots,\alpha_{2u-1}\) with \(\alpha_i\neq \pm 2\), and define
\[
Q(x,y)\coloneqq \prod_{i=1}^{2u-1}(y-\alpha_i x).
\]
Inside \(\mathcal{Y}\), consider the codimension two subvariety \(X\) defined by
\begin{equation*}
\begin{dcases}
F_1\coloneqq xw+zt=0,\\
F_2\coloneqq yw+z^2+t^2+s_0(s_0y-s_1x)Q(x,y)=0.
\end{dcases}
\end{equation*}
Both \(F_1\) and \(F_2\) are global sections of \(\OO_{\mathcal{Y}}(2uH+2L)\). Thus, \(X\) is a relative weighted complete intersection over \(B\). By adjunction,
\[
-K_X=(H+2L)|_X.
\]
Since \(H\) is relatively ample over \(B\) and \(L\) is the pullback of an ample divisor on \(B\), the divisor \(H+2L\) is ample on \(\mathcal{Y}\), and hence \(-K_X\) is ample on \(X\). Thus, \(X\) is a Fano threefold.

\smallskip

Let \(g\coloneqq p|_X\colon X\to B\). On the affine chart \(s_0\neq 0\), set \(\tau=s_1/s_0\). Then the fiber \(X_\tau\) is given by
\begin{equation*}
\begin{dcases}
xw+zt=0,\\
yw+z^2+t^2+Q(x,y)(y-\tau x)=0.
\end{dcases}
\end{equation*}
This is the same normal form as \eqref{eq:odd-ci-model}, with \(h(x,y)=Q(x,y)(y-\tau x)\).

Hence, \(X_\tau\) is a \(\C(B)\)-form of \(S_{2u-1}^{2u+3}\). Moreover, the factor \(h(1,a)=Q(1,a)(a-\tau)\) has \(2u\) roots. Thus, the generic fiber satisfies \(\ell_{X_\eta}\ge 2u\). By Corollary \ref{cor:m+4}, the generic fiber \(X_\eta\) is cylindrical. Therefore, \(X\) contains a vertical cylinder with respect to \(g\) by \cite[Lemma 3]{DK18}.

\smallskip

The family \(g\colon X\to B\) is also non-isotrivial. Indeed, by the description above, a member \(H_a\in |-K_{X_\tau}|\) is reducible if and only if
\[
\Bigl(1-\frac{a^2}{4}\Bigr)Q(1,a)(a-\tau)=0.
\]
Thus the set of reducible anticanonical members is
\[
R_\tau=\{\pm 2,\alpha_1,\dots,\alpha_{2u-1},\tau\}.
\]
Since \(\alpha_1,\dots,\alpha_{2u-1}\) are chosen to be distinct and general, the subgroup of \(\mathrm{PGL}_2\) preserving \(\{\pm 2,\alpha_1,\dots,\alpha_{2u-1}\}\) is trivial. Hence, any isomorphism \(X_\tau\simeq X_{\tau'}\) must induce the identity on \(|-K_{X_\tau}|\cong \P^1\), and therefore \(\tau=\tau'\). Consequently, \(g\colon X\to B\) is a non-isotrivial Fano threefold carrying a vertical cylinder. 

\smallskip

Theorem \ref{thm:Fano} is thus completed. 
\end{proof}

\section*{Acknowledgements}
The authors would like to thank Professors Adrien Dubouloz and Takashi Kishimoto for carefully reading this paper and for their valuable comments. The second author is partially supported by the Basic Science Research Program through the National Research Foundation of Korea (NRF) funded by the Ministry of Education (No. RS-2023-00237440 and 2021R1A6A1A10039823) and by Samsung Science and Technology Foundation under Project Number SSTF-BA2302-03. The third author is supported by JSPS KAKENHI Grant Number JP24K22823 and JP25K17222.  

\bibliographystyle{habbvr}
\bibliography{biblio}

\end{document}